\newtheorem{theorem}{Theorem}[section]
\newtheorem{proposition}[theorem]{Proposition}
\newtheorem{definition}[theorem]{Definition}
\newtheorem{corollary}[theorem]{Corollary}
\newtheorem{lemma}[theorem]{Lemma}
\numberwithin{equation}{section}
\theoremstyle{remark}
\newtheorem{remark}[theorem]{Remark}
\newcommand{\R}{\mathbb{R}}
\newcommand{\C}{\mathbb{C}}
\newcommand{\D}{\mathbb{D}}
\newcommand{\T}{\mathbb{T}}
\newcommand{\Up}{\mathbb{H}}
\newcommand{\St}{\mathbb{S}}
\newcommand{\Z}{\mathbb{Z}}
 \newcommand{\dd}{\mathrm{d}}
\newcommand{\M}{\mathbb{M}}
\begin{document}
\title[Equivariant  primitive harmonic maps into  $k-$symmetric spaces]{Equivariant  primitive harmonic maps into  $k-$symmetric spaces, with applications to Willmore surfaces}
\author{Josef F. Dorfmeister, Peng Wang }
%\date{\today}
\maketitle

\begin{abstract}  In this note we discuss the construction of equivariant primitive harmonic maps into  $k-$symmetric spaces and give many applications to the construction of Willmore surfaces. In particular,  examples of $S^1-$equivariant  Willmore Moebius strips in $S^3$ are obtained as generalizations of the  Lawson minimal Klein Bottles in $S^3$.
\end{abstract}

\vspace{0.5mm}  {\bf \ \ ~~Keywords: equivariant primitive harmonic maps;  Willmore sufaces; Willmore Moebius strips  }\vspace{2mm}

{\bf\   ~~ MSC(2020): \hspace{2mm} 53C40, 53A31}

%\tableofcontents

\section{Introduction}

In every class of surfaces, those surfaces with  symmetries are of particular interest and importance (see for instance \cite{Bu-Ki,Do-Ha2,Do-Ha4,Ejiri-equ,FP}).  In this paper we concentrate on surfaces containing a one--parameter group of (necessarily orientation preserving)  symmetries. More precisely, we investigate exclusively surfaces which are characterized by the harmonicity of some ``Gauss map" from a Riemann surface into a real symmetric space admitting an equivariant action of a one-parameter  group of symmetries. This is motivated mainly by the study of Willmore surfaces with symmetries by the authors \cite{DoWaSym1,DoWaSym2} and the idea of generalizing the beautiful theory of Burstall-Killian on $S^1-$equivariant harmonic tori in $S^2$ \cite{Bu-Ki} for constant mean curvature surfaces.

On the other hand, in the celebrated paper \cite{Lawson} Lawson constructs infinitely many minimal Klein bottles in $S^3$. These examples are all $S^1-$equivariant. It is a natural idea to understand these examples from the point view of Willmore surfaces. Moreover, with such an expression of Lawson's classical examples at hand, one would be able to consider the generalization of Lawson's classical examples by using the loop group method \cite{DoWa10,DoWa12,DoWaSym1,DoWaSym2}. This forms the second motivation of this paper. And we do obtain infinitely many $S^1-$Willmore M\"{o}bius strips in $S^3$, which are not minimal in any space form and are new to the authors' best knowledge.

We start by giving a very general definition for an equivariant map from  a Riemann surface to a $k-$symmetric space (see Definition \ref{def-equi}.) It turns out that  up to biholomorphic transformations, there are only two types of such maps, namely those equivariant under a one-parameter family of translations and those  equivariant under a one-parameter family of rotations.

 Accordingly we split up the discussion into the case of ``translationally equivariant'' harmonic maps, the ``TE case'',  and of  ``rotationally equivariant '' harmonic maps, the ``RE case''.

Then we focus on TE harmonic maps in this paper. To be concrete, we recall the basic loop group theory for TE primitive harmonic maps in a $k-$symmetric space developed by Burstall and Kilian \cite{Bu-Ki}.  The key idea is to assign a nice constant holomorphic potential to a TE primitive harmonic map (See Section 4 for more details). Then we investigate properties of a TE primitive harmonic map via this potential. In particular,  we use it to discuss  periodicity properties of TE harmonic maps.

For  applications we concentrate primarily on equivariant Willmore surfaces in this paper, last not least, since for many other surface classes the equivariant maps have already been studied in detail (See e.g., \cite{Bu-Ki,Do-Ha2,Do-Ha3,Do-Ha4}). Note that in \cite{FP} a detailed discussion of equivariant Willmore surfaces in $S^3$ has been  presented, where the authors  mainly use the work of Hsiang and Lawson on minimal submanifolds of low co-homogeneity \cite{HL}. We also refer to \cite {heller2014} for a recent treatment of equivariant constrained Willmore tori in $S^3$.

For our discussions two results are crucial. First we show that our definition of a "translationally equivariant surface $f$" implies the existence of a frame $F$ satisfying the quasi-invariant transformation formula $F(z + t) = R(t) F(z)$. Secondly, starting from such a formula one can apply almost verbatim the work of \cite{Bu-Ki} and obtain a description of these surfaces which is closely analogous to the discussion of Delaunay surfaces in the case of constant mean curvature surfaces in $R^3$.

In our explicit applications of the basic theory of TE primitive harmonic maps
into k-symmetric spaces we consider Willmore cylinders and Willmore Moebius strips in $S^m$.
For both types of Willmore surfaces we present characterizing properties for the corresponding Delaunay type matrices. And for both types of surfaces we give new examples.

 We plan to discuss Willmore tori in more detail in a subsequent publication.

This paper is organized as follows: In Section 2 we discuss some basic concepts in the
investigation of equivariant maps from Riemann surfaces to homogeneous spaces $G/K$. Then we consider translationally equivariant  (TE for short) maps in Section 3. The loop group theory of TE harmonic maps into $k-$symmetric spaces is introduced in Section 4 and is applied to the study of Willmore surfaces in Section 5.  Then in Section 6 we consider  periodicity properties of  TE harmonic maps. In particular, we discuss how to construct equivariant cylinders with special emphasis on Willmore cylinders. Section 7 is an application of the above theory to Willmore Moebius surfaces.
 We give a characterization of all TE Willmore Moebius surfaces in $S^m$
 in terms of Delaunay type potentials, see Theorem \ref{thm-equ-n-ori} and
 Theorem \ref{thm-equ-n-ori-2}
  In particular we obtain infinitely many new examples of equivariant  Willmore Moebius strips in $S^3$, which are generalizations of the  Lawson minimal Klein Bottles in $S^3$. These new surfaces are {\em not}  conformally equivalent to any minimal surface in any space form.

%%%%%%%%%%%%%%%%%%%%%%%%%
\section{Equivariant maps from Riemann surfaces to $k-$symmetric spaces}

We start with a very general definition and recall from Riemann surface theory all possibilities.

%%%%%%%%%%%%%%%%%
\subsection{Basic definitions}

\emph{In this paper we will consider differentiable maps $f: M \rightarrow G/K$, where $M$ is a connected Riemann surface, $G$ is a real Lie group and $K$ is a closed subgroup of $G$ which contains the center of $G$. We will always assume that $G$ is represented as a matrix group. We will  {specialize} the setting further starting in Section 4.}

We begin  by stating a simple lemma.

\begin{lemma}
Let $G$ be a real Lie group and $K$ a closed subgroup as above. Let $M$ denote a
connected Riemann surface and $f: M \rightarrow G/K$  a differentiable map.
Let $\mathcal{C}(f)$ denote the closed subgroup \[\mathcal{C}(f) = \{ R \in G| R.f(p) = f(p) \hspace{2mm} \mbox{ for all } \hspace{2mm}p \in M \}\] of $G$. Then we observe
\begin{enumerate}
\item If for some $g \in Aut(M)$ there exists  some $R \in G$ such
that $f(g.p) = R.f(p)$ for all $p \in M$, then $R^{-1}\mathcal{C}(f)R \subset \mathcal{C}(f)$
and exactly the  elements
  $ R' \in R\mathcal{C}(f)$  satisfy $f(g.p) = R'.f(p)$.

 \item If $R \in G$ is given, and $g \in Aut(M)$ satisfies $f(g.p) = R.f(p)$ for all $p \in M$, then also any $g' \in  \kappa(f) g$ satisfies $f(g'.p) = R.f(p)$ ,
 where $\hbox{$\kappa(f) = \{h \in Aut(M)| f(hp) = f(p)$ for all $p \in M$\}.}$

\item  The center of $G$ is contained in $\mathcal{C}(f)$.
 \end{enumerate}
 \end{lemma}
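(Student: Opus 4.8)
The plan is to verify all three assertions directly from the definitions, using only the left-translation convention for the action of $G$ on $G/K$, namely $R.(gK) = (Rg)K$.

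I would begin with the third claim, since it is independent of the others and isolates the one hypothesis that actually carries weight. Let $z$ lie in the center of $G$. By assumption $z \in K$, so for every coset $gK$ one has $z.(gK) = (zg)K = (gz)K = g(zK) = gK$, the last equality because $z \in K$. Thus $z$ fixes every point of $G/K$; in particular $z.f(p) = f(p)$ for all $p \in M$, which is exactly the statement $z \in \mathcal{C}(f)$.

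For the first claim I would treat the two halves separately. To obtain $R^{-1}\mathcal{C}(f)R \subset \mathcal{C}(f)$, fix $S \in \mathcal{C}(f)$ and evaluate $R^{-1}SR$ on $f(p)$: applying $R.f(p) = f(g.p)$ first, then $S.f(g.p) = f(g.p)$ (valid because $S$ fixes $f$ at \emph{every} point, in particular at $g.p$), and finally $R^{-1}.f(g.p) = f(p)$, one gets $R^{-1}SR.f(p) = f(p)$ for all $p$, so $R^{-1}SR \in \mathcal{C}(f)$. For the characterization of all admissible $R'$: if $R' = RS$ with $S \in \mathcal{C}(f)$, then $R'.f(p) = R.f(p) = f(g.p)$, so every element of $R\mathcal{C}(f)$ works; conversely, if $R'.f(p) = f(g.p) = R.f(p)$ for all $p$, then $R^{-1}R'$ fixes the image pointwise, hence $R^{-1}R' \in \mathcal{C}(f)$ and $R' \in R\mathcal{C}(f)$. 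The second claim is the quickest: writing $g' = hg$ with $h \in \kappa(f)$ and setting $q = g.p$, one computes $f(g'.p) = f(h.q) = f(q) = f(g.p) = R.f(p)$, where the middle equality is precisely the defining property of $\kappa(f)$.

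I do not expect a substantial obstacle here; all three statements are formal consequences of the definition of $\mathcal{C}(f)$ together with the left-action convention. The only point demanding genuine care is the bookkeeping of \emph{which} identity holds at \emph{which} point — in particular, in the first claim one must invoke $S.f(q) = f(q)$ at the shifted point $q = g.p$ rather than at $p$ itself. It is also worth flagging that the containment in part (1) is one-sided on purpose: since $\mathcal{C}(f)$ need not be normal in $G$, the conclusion is the inclusion $R^{-1}\mathcal{C}(f)R \subset \mathcal{C}(f)$ rather than an equality, and this asymmetry is intrinsic to the statement, not an artifact of the argument.
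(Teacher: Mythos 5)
Your proposal is correct and follows essentially the same route as the paper's own proof: the same conjugation computation for $R^{-1}\mathcal{C}(f)R\subset\mathcal{C}(f)$, the same equivalence $f(g.p)=R'.f(p)\iff R^{-1}R'\in\mathcal{C}(f)$, and the same one-line verification for part (2). Your spelled-out argument for part (3) (the center lies in $K$, hence fixes every coset) is exactly what the paper leaves implicit behind the word ``Obvious.''
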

\begin{proof}
(1). Since $R^{-1} \tilde{R}R.f(p)=R^{-1}\tilde{R}. f(g.p)=R^{-1}.f(g.p)=f(p)$ for any $\tilde{R}\in \mathcal{C}(f)$, we obtain $R^{-1}\mathcal{C}(f)R \subset \mathcal{C}(f)$. And the last statement holds due to $f(g.p) = R'.f(p)$ being equivalent to $f(p)=(R^{-1}R').f(p)$.

(2). Assume $g'=hg$ with $h\in\kappa(f)$. We  have $f(g'.p)=f(hg.p)=f(g.p)=R.f(p)$.

(3) Obvious.
\end{proof}

\begin{definition} \label{full}
A differentiable map $f: M \rightarrow G/K$ is called full, if  $\mathcal{C}(f)$ coincides with the center of $G$.
 More precisely this means
$\mathcal{C}(f) = \{ R \in G; R.f(p) = f(p) \} =$ center of G.
\end{definition}

\begin{remark}
\emph{In this paper we will always  consider  full maps, unless the opposite is stated explicitly. } This will yield  particularly simple results if the center of $G$ only consists of the identity element $e \in G$.
In our applications to Willmore surfaces we will use $SO^+(1,n+3),$ the connected component of $SO(1,n+3)$ containing the identity element, and here the center is trivial.
\end{remark}

\begin{definition}\label{def-equi}
Let $G$ be a real Lie group and $K$ a closed subgroup. Let $M$ denote a
connected Riemann surface and $f: M \rightarrow G/K$  a differentiable map.
Let  $g_t$ be a one--parameter group of biholomorphic maps  of $M$. Then $f$ is called {\bf equivariant }with regard to $g_t$ if there exists a one--parameter group $R(t):\R\rightarrow G$ such that
\begin{equation} \label{equiv-gen}
f(g_t.p) = R(t) f(p)
\end{equation}
for all $ p \in M$ and $t \in \R$.
\end{definition}

The following lemma indicates the reason for why we  restrict to full maps:
\begin{lemma}  If $f: M \rightarrow G/K$ is equivariant relative to
 $(g_t,R(t))$, then $f,$ not necessarily assumed to be full, is also equivariant relative to $(g_t, \hat{R}(t))$ if $\hat{R} = S R(t) S^{-1}$  and
$S \in \mathcal{C}(f)$.
\end{lemma}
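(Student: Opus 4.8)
The plan is to verify directly the two requirements of Definition~\ref{def-equi} for the pair $(g_t, \hat{R}(t))$: first that $\hat{R}(t) = S R(t) S^{-1}$ is again a one--parameter group $\R \to G$, and second that the equivariance relation $f(g_t.p) = \hat{R}(t).f(p)$ holds for all $p \in M$ and all $t \in \R$.

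For the first requirement I would simply observe that conjugation by the fixed element $S$ is a group homomorphism, so
\[
\hat{R}(t_1)\hat{R}(t_2) = S R(t_1) S^{-1} S R(t_2) S^{-1} = S R(t_1+t_2) S^{-1} = \hat{R}(t_1+t_2),
\]
with $\hat{R}(0) = e$, and smoothness of $t \mapsto \hat{R}(t)$ is inherited from that of $R$. Hence $\hat{R}$ is a one--parameter group in $G$.

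The second requirement is the substance of the argument, and the only point requiring care is that $\mathcal{C}(f)$ must be used as a \emph{subgroup} of $G$, not merely as a subset. Being the pointwise stabilizer of the image $f(M)$, the set $\mathcal{C}(f)$ is closed under inversion, so $S \in \mathcal{C}(f)$ forces $S^{-1} \in \mathcal{C}(f)$ and therefore $S^{-1}.f(p) = f(p)$ for every $p$. I would then compute, for arbitrary $p \in M$ and $t \in \R$,
\[
\hat{R}(t).f(p) = S R(t) S^{-1}.f(p) = S R(t).f(p) = S.f(g_t.p) = f(g_t.p),
\]
where the second equality uses $S^{-1} \in \mathcal{C}(f)$, the third is the assumed equivariance of $f$ relative to $(g_t, R(t))$, and the last applies $S \in \mathcal{C}(f)$ at the point $g_t.p \in M$. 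This is exactly equivariance of $f$ relative to $(g_t, \hat{R}(t))$.

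I do not anticipate a genuine obstacle here; the statement is essentially bookkeeping. The one feature worth flagging is that fullness of $f$ plays no role whatsoever, which is precisely what the hypothesis ``not necessarily assumed to be full'' signals: the proof needs only that $S$, and hence $S^{-1}$, fix every point of $f(M)$. The group structure of $\mathcal{C}(f)$, already implicit in the first lemma, is what makes both the one--parameter group property and the cancellation of $S^{-1}$ go through, and no further structure on $G$, $K$, or $M$ is required.
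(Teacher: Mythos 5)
Your proof is correct; the paper in fact states this lemma without proof, treating it as an immediate consequence of its preceding Lemma 2.1(1) (which records that $R(t)^{-1}\mathcal{C}(f)R(t)\subset\mathcal{C}(f)$ and that exactly the elements of $R(t)\mathcal{C}(f)$ satisfy the equivariance relation), and your direct computation $S R(t) S^{-1}.f(p)=S R(t).f(p)=S.f(g_t.p)=f(g_t.p)$ is precisely the verification that lemma encodes. Your added checks that $\hat{R}$ is again a one--parameter group and that fullness is never used are accurate and consistent with the paper's intent.
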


%%%%%%%%%%%%%%%%%%
\subsection{Riemann surfaces with  one-parameter groups of automorphisms}

We recall from the theory of Riemann surfaces (see e.g. \cite{Farkas-Kra}, Theorem V.4.1):
\begin{theorem} \label{Kra}
The only Riemann surfaces $M$ admitting a non--discrete group of automorphisms are the surfaces biholomorphically equivalent to
\begin{enumerate}
\item the simply--connected Riemann surfaces $\C,  \D, \C P^1 \cong S^2$.

\item the surfaces with fundamental group $\mathbb{Z}$, $\C^{\star} = \C\setminus \lbrace 0 \rbrace, \mathbb D^{\star} =  \mathbb D\setminus \lbrace 0 \rbrace$, and
$ \mathbb D_r = \{ z \in \C, 0 < r < |z| < 1$, where  $0< r < 1$.\

\item the surfaces with fundamental group isomorphic to  $\mathbb{Z} \times \mathbb{Z}$,
 i.e. the tori $\{\T^2 = \C/ \mathcal{L}\}$, where $\mathcal{L}$ is
a rank $= 2$ lattice in $\C$.
\end{enumerate}
\end{theorem}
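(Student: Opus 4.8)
The plan is to reduce everything to the Uniformization Theorem together with a study of deck transformations. First I would pass to the universal cover $\pi: \tilde{M} \to M$, so that $\tilde{M}$ is one of $\C P^1$, $\C$, or $\D$ and $M \cong \tilde{M}/\Gamma$, where $\Gamma \cong \pi_1(M)$ is the group of deck transformations, realized as a discrete subgroup of $\mathrm{Aut}(\tilde{M})$ acting freely and properly discontinuously. The central observation is that every automorphism of $M$ lifts to an automorphism of $\tilde{M}$ normalizing $\Gamma$, and conversely every normalizing automorphism descends; since two lifts of the same map differ by an element of $\Gamma$, one obtains $\mathrm{Aut}(M) \cong N(\Gamma)/\Gamma$, where $N(\Gamma)$ is the normalizer of $\Gamma$ in $\mathrm{Aut}(\tilde{M})$. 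Because $\Gamma$ is discrete, $\mathrm{Aut}(M)$ is non-discrete precisely when $N(\Gamma)$ has positive dimension.

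The next step is to note that the identity component $N(\Gamma)^0$ must centralize $\Gamma$: conjugation defines a continuous homomorphism from the connected group $N(\Gamma)^0$ into $\mathrm{Aut}(\Gamma)$, and since $\Gamma$ is discrete this homomorphism is trivial. Hence non-discreteness of $\mathrm{Aut}(M)$ forces the centralizer $Z_{\mathrm{Aut}(\tilde{M})}(\Gamma)$ to be positive-dimensional, which is a strong constraint on $\Gamma$. I would then run the case analysis according to $\tilde{M}$.

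For $\tilde{M} = \C P^1$ every nontrivial element of $PSL(2,\C)$ has a fixed point, so a free action forces $\Gamma = \{e\}$ and $M \cong S^2$. For $\tilde{M} = \C$ the fixed-point-free automorphisms are exactly the translations, so $\Gamma$ is a lattice of rank $0$, $1$, or $2$, giving $M \cong \C$, $M \cong \C/\Z\omega \cong \C^{\star}$ (via $z \mapsto e^{2\pi i z/\omega}$), or a torus $\C/\mathcal{L}$; in each case the translations (respectively the scalings $z\mapsto\lambda z$ on $\C^{\star}$) furnish a continuous family of automorphisms, so all three surfaces qualify.

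The hard case, and the main obstacle, is $\tilde{M} = \D$, where $\mathrm{Aut}(\D) \cong PSL(2,\R)$ and $\Gamma$ is a freely acting Fuchsian group, so every nontrivial element is hyperbolic or parabolic. Here one must show that a positive-dimensional centralizer forces $\Gamma$ to be elementary, i.e. trivial or infinite cyclic generated by a single hyperbolic or parabolic element. The mechanism is that in $PSL(2,\R)$ the centralizer of a single hyperbolic element is the one-parameter group of hyperbolics with the same axis, and the centralizer of a single parabolic element is the one-parameter group of parabolics with the same fixed point; as soon as $\Gamma$ contains two non-commuting elements (the non-elementary case), the common centralizer collapses to the trivial center of $PSL(2,\R)$ and $\mathrm{Aut}(M)$ becomes discrete. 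Carrying this out cleanly requires the standard classification of elements of $PSL(2,\R)$ by their fixed-point behaviour and the fact that two such isometries commute if and only if they share axis or fixed point. The surviving possibilities are $\Gamma = \{e\}$ (yielding $M \cong \D$), $\Gamma$ generated by a parabolic (yielding $M \cong \D^{\star}$), and $\Gamma$ generated by a hyperbolic (yielding an annulus $\D_r$ whose modulus is determined by the translation length); identifying these quotients explicitly via the exponential and logarithm maps completes the classification.
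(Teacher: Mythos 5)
Your argument is correct, but note that the paper does not prove this statement at all: it is explicitly recalled from the literature (Farkas--Kra, Theorem V.4.1), so there is no in-paper proof to compare against. Your proof is essentially the standard textbook argument behind that citation: uniformization, the identification $\mathrm{Aut}(M)\cong N(\Gamma)/\Gamma$, the observation that $N(\Gamma)^0$ centralizes the discrete group $\Gamma$, and the classification of freely acting Fuchsian groups with positive-dimensional centralizer as the elementary ones (trivial, cyclic parabolic, cyclic hyperbolic), yielding exactly $\D$, $\D^{\star}$, and $\D_r$. The only step worth writing out with full care is the hyperbolic-plane case, where one should argue that the centralizer of any nontrivial hyperbolic or parabolic element is a one-dimensional subgroup containing that element, so a positive-dimensional $Z(\Gamma)$ forces all of $\Gamma$ into a single one-parameter subgroup and hence forces $\Gamma$ to be infinite cyclic; your sketch contains this mechanism and is sound.
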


\begin{remark}
The motivation of this work  {arose out of investigations on} Willmore surfaces. Let's  {consider now} Willmore surfaces.
Since the projection $\pi: \tilde{M} \rightarrow M$, $\tilde{M}$ denoting the universal cover of $M$, is a covering map, by the monodromy theorem one can always lift the equivariant Willmore surface $f: M \rightarrow G/K$ to an equivariant Willmore surface  $\tilde{f} : \tilde{M} \rightarrow G/K$.
The theorem above and the statement just above, taken together,  imply that we need to consider in the first place equivariant Willmore surfaces defined on simply-connected Riemann surfaces, and then Willmore surfaces defined on (topological) cylinders and tori. We note that the last cases are quotients of $\C$ in the case $M \cong  \C^*$ and
$ M \cong \T^2$, and of $ \mathbb D$ in the cases $M \cong  \mathbb D^*, \mathbb{ D}_r$.\end{remark}

Next we collect what possibilities we have for $g_t$ on a simply connected Riemann surface $\tilde{M}$
(see e.g. also \cite{Smyth}).
\vspace{3mm}

{\bf {Case  1 : $\tilde{M}=\D = \Up$ is the upper half plane:}}
\vspace{3mm}

In this case  { $Aut(\Up) \cong P SL(2, \R)$ and we  can w.l.g.}  consider $g_t$ as a one--parameter group  in $SL(2,\R)$. Then $g_t = \exp (tA)$ with some matrix $A \in \mathfrak{sl}(2,\R)$. If the eigenvalues of A are real and different,  then  the Moebius transformation induced by $g_t$ is, up to conjugation, of the form \[\hbox{$g_t.z = e^{2at} z $ for some $a \in \R$.}\]  Then $\ln ( \Up) -\pi/2$ is the open strip in $\C$ between the lines $y=-\pi/2$ and $y= \pi/2$ and $g_t $ acts there by translations parallel to the real axis.

 If $A$ is nilpotent (i.e. both eigenvalues  of $a$ are real, but equal, whence $= 0$), then $g_t$ already acts by translation in $\Up$.

If the eigenvalues of $a$ are complex, then changing biholomorphically to the
open unit disk $\D$ we see that $g_t$ acts by rotation about $z =0$.
\vspace{2mm}

 {\bf{Case  {2 : }$\tilde M = \C$} is the complex plane:}
  \vspace{3mm}

We know that $g_t$ is of the form $g_t .z = a_t .z + b_t$.

If $b_t =0$ for all $t$, then we have a ``complex  rotation''.
If  $a_t = 1$ for all $t$, then $g_t$ acts by translations anyway and we can assume w.l.g. that this translation is parallel to the $x-$axis. It thus remains the case where neither $b_t$ nor $a_t$ are trivial. Writing $g_t = exp(t A)$ it is easy to verify that after conjugation by some $t-$independent element in $Aut(\C)$ the one--parameter group $g_t$ turns into a one--parameter group of ``complex rotations''.
\vspace{3mm}

{\bf{Case   3 : $\tilde M = S^2$  is the Riemann sphere:}}
\vspace{3mm}

In this case we consider the group $SL(2,\C)$ of biholomorphic transformations of $S^2$
and write $g_t = \exp (t A)$ with $A$ some matrix in $\mathfrak{sl}(2,\C)$.

If $A$ has two equal eigenvalues, then $A $ is nilpotent and w.l.g. we can assume that $A$ is upper triangular with real non-zero off-diagonal entry. But then after stereographic projection $g_t$ acts by translations parallel to the real axis.
In the remaining case, $A$ has two different eigenvalues. In this case $g_t$ acts, after some stereographic projection, on $\C$ by $z  \rightarrow \exp(2ta)z$, i.e. by a complex rotation.\\

From the discussion above one obtains Theorem \ref{class-equi} (including the groups of translations).

%%%%%%%%%%%%%%%%%%%%%%%%%

\begin{theorem} \label{class-equi}
(Classification of Riemann surfaces admitting 1-parameter groups of automorphisms with w.l.g. representative groups, e.g. \cite{Farkas-Kra})

\begin{enumerate}
\item
 $S^2$ , group of rotations about the $3-$axis;

\item $\C^*$, group of all maps $z \rightarrow e^{tw}z$ , for some fixed $w \in \C \setminus\{0\}$;

\item $\mathbb{D}^*, \mathbb{D}_r,$ group of all rotations about $0$;

\item    $\T^2=\C/ \mathcal{L}_{\tau}$, group of translations parallel to the $x-$axis;

 \item
  \begin{enumerate}
  \item $\C$, group of all real translations;

\item$\C$, group of all rotations about the origin $0$;

\item $\C$, group of all maps $z \rightarrow e^{tw}z$ ,  for some fixed $w \in \C \setminus\{0\}$;
\end{enumerate}
\item
  \begin{enumerate}\item $\mathbb{D}$, group of all rotations about the origin $0$;

\item  $\mathbb{D} \cong \mathbb{H}$, group of all real translations;

\item  $ \mathbb{D} \cong \mathbb{H} \cong \log  \mathbb{H} =\St$, the strip between $y=0$ and $y = \pi$,
group of all real translations.
\end{enumerate}
\end{enumerate}
Here $\mathcal{L}_{\tau}$ is the free group generated by the two translations $z\mapsto z+1$, $z\mapsto z+\tau$, $\mathrm{Im}\tau >0$ and is possibly rotated against the $x-axis$.
\end{theorem}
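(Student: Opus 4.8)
The plan is to combine the structural classification of Theorem \ref{Kra}, which already pins down the homeomorphism type of any $M$ admitting a non-discrete automorphism group, with the explicit case analysis of the simply-connected models carried out in Cases 1--3 above, and then to treat the multiply-connected surfaces by descent from their universal covers. I would first dispose of the simply-connected surfaces $\C$, $\D\cong\Up$, and $S^2$. For these $M$ is its own universal cover, so every one-parameter group $g_t$ of automorphisms is a one-parameter subgroup of $\mathrm{Aut}(\tilde M)$, i.e. of $SL(2,\R)$, of the affine group of $\C$, or of $SL(2,\C)$ respectively, and the discussion in Cases 1--3 shows that after conjugation by a fixed ($t$-independent) biholomorphism---which is precisely the ``w.l.g.'' in the statement---each group is one of the listed representatives: for $\C$ a real translation, a rotation about the origin, or a map $z\mapsto e^{tw}z$ (item (5)); for $\D$ a rotation about the origin, or, after passing to $\Up$ resp. to the strip $\St$, a real translation (item (6)); and for $S^2$, after stereographic projection, the group $z\mapsto e^{2ta}z$ fixing $0$ and $\infty$, recorded as a rotation about the $3$-axis in item (1), the remaining parabolic subcase being the translation group noted before the theorem. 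This part is essentially a transcription of the case discussion and needs no new argument.

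The substantive work is the descent to $\C^*$, $\D^*$, $\mathbb{D}_r$, and $\T^2$. The strategy here: a one-parameter group $g_t$ on $M=\tilde M/\Gamma$ lifts, by the covering-space lifting argument already invoked in the Remark after Theorem \ref{Kra}, to a one-parameter group $\tilde g_t\subset\mathrm{Aut}(\tilde M)$, and such $\tilde g_t$ descends to $M$ exactly when it normalizes the deck group $\Gamma$, so the one-parameter groups on $M$ correspond, up to $\Gamma$, to the one-parameter subgroups of $\mathrm{Aut}(\tilde M)$ normalizing $\Gamma$. Since $\Gamma$ is discrete while $t\mapsto \tilde g_t\gamma\tilde g_t^{-1}$ is continuous with value $\gamma$ at $t=0$, conjugation must fix each generator of $\Gamma$, so the real task is the centralizer computation in each case. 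For $\C^*=\C/2\pi i\Z$ (via $u\mapsto e^{u}$) only the translations $u\mapsto u+tw$ centralize $u\mapsto u+2\pi i$, and they push down to $z\mapsto e^{tw}z$, giving item (2), whereas the rotations $u\mapsto e^{is}u$ carry the lattice vector $2\pi i$ off the lattice and are excluded. For $\D^*$ and $\mathbb{D}_r$, realized as $\Up$ resp. a strip modulo a parabolic resp. hyperbolic cyclic group, the centralizer of the deck generator is a single translation-type one-parameter group, which descends to the rotations about $0$ of item (3). For $\T^2=\C/\mathcal{L}$ an affine map $z\mapsto az+b$ normalizes $\mathcal{L}$ only if $a\mathcal{L}=\mathcal{L}$; as $a_t$ varies continuously and $a\mathcal{L}=\mathcal{L}$ confines $a$ to a discrete set, one forces $a_t\equiv 1$, so only translations $z\mapsto z+tb$ survive, and a fixed rotation of the plane---recorded as $\mathcal{L}_\tau$ being ``possibly rotated against the $x$-axis''---aligns the direction of $b$ with the real axis, yielding item (4).

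The main obstacle I anticipate is the descent step, specifically verifying in each multiply-connected case that the normalizer condition eliminates all one-parameter groups except the claimed one and that the survivor pushes down to exactly the stated representative. The delicate points are: (i) handling the parabolic deck group of $\D^*$ and the hyperbolic deck group of $\mathbb{D}_r$ in a uniform way, since both become translation subgroups of $\mathrm{Aut}(\Up)$ after the logarithmic, respectively strip, change of coordinates, yet sit inside $\Up$ in genuinely different conjugacy classes; and (ii) justifying the ``w.l.g.'' normalizations---rotating the lattice $\mathcal{L}_\tau$, choosing the translation axis, and conjugating the simply-connected groups---as bona fide biholomorphic changes of coordinate that leave the surface class unchanged. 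Once the centralizer computations are in place, assembling the final list is immediate from Theorem \ref{Kra}.
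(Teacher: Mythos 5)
Your proposal is correct and follows essentially the same route as the paper: invoke the Farkas--Kra classification (Theorem \ref{Kra}) of surfaces with non-discrete automorphism group, run the case analysis of one-parameter subgroups of $\mathrm{Aut}(\tilde M)$ for the simply connected models, and handle the multiply connected surfaces by lifting to the universal cover. The paper leaves the descent step implicit (``From the discussion above one obtains Theorem \ref{class-equi}''), whereas you spell out the centralizer-of-the-deck-group computations; this is a welcome amplification rather than a different argument.
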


{Note that each complex cylinder is biholomorphically equivalent to $\C^*, \D^*$, or $\D_r, $
for some $0 < r < 1$.} We refer to  \cite{Farkas-Kra}, Section V, for more details.

It is easy to observe that up to biholomorphic equivalence the following five types of one-parameter groups acting on a Riemann surface $M$ occur:
\begin{enumerate}
\item all translations  on the strip
\begin{equation}\label{eq-strip}
  M=\St:=\{z\in\C| -\pi<Im z<\pi\}
\end{equation}
containing the real-axis;
\item all rotations  about $z=0$  and $0 \notin M$   with $M = \C^*, \D^*,\D_r;$
\item all rotations about $z=0$ and $0 \in M$  with  $M = \C, \D, S^2;$
\item all maps $z \rightarrow e^{tw}z$ , for some fixed $w$ in $\C\backslash\R$;
here $z \in \C$ or $z \in \C^*$;
\item  {all maps of the cosets $z {L}_{\tau} \rightarrow (z + at) {L}_{\tau},$
where $a$ is a fixed complex number and  $t \in \R$ arbitrary;} {Here $L_\tau$ is the lattice in $\C$ spanned by $1$ and  $\tau$.}
\end{enumerate}

In Case (2) the Riemann surface $M$ has fundamental group $\mathbb Z$ and the group action can be lifted to the
universal cover $\tilde{M}$, where it acts  as in Case (1).
Thus in these cases all equivariant maps can be obtained from Case (1), where the lifted map $\tilde{f}$ is periodic with real period.

In Case (3) we have rotations with fixed point (in $M$). It turns out that it is best to treat these cases directly. But one could also remove the fixed point $(z=0)$, arriving at Case (2). One needs to make sure though in this case that the solutions obtained on $M\setminus\{0\}$ extend smoothly to $z=0$.

In Case (4) one should remove the fixed point ($z=0$, if $M = \C$) and lift the group action to the universal cover $\tilde{M}$.
As far as we only look at a  {one-parameter} group action, this case is essentially the same for all $w \in \C^*$.

In Case (5) the action can be lifted to the universal cover $\tilde{M} = \C$ and acts there w.l.g. as in Case (1).

We finish this introductory section with a definition which covers  all equivariant
  cases depending on whether the equivariant group action has a fixed point or not.
\begin{definition}
Let $f: M \rightarrow G/K$  a differentiable map.
\begin{enumerate}
\item Let $M$ be a strip  $\St$ parallel to the real axis. The map  $f$ will be called {\bf ``translationally equivariant" (TE for short)}, if it is equivariant under all real translations.

\item Let $M$ be a Riemann surface containing the point $z=0$ and admitting all rotations about $z=0$ as (biholomorphic) automorphisms. The map $f$ will be called {\bf ``rotationally equivariant'' (RE for short)}, if it is equivariant under all  rotations about $z=0$.
    \end{enumerate}
\end{definition}

\begin{remark} Let $y:M\rightarrow S^n$ be a conformal immersion of a Riemann surface. If it has a one-parameter group symmetry $(\gamma(t), A(t))$ with $\gamma(t):M\rightarrow M$ and $A(t)$ taking values in $SO^+(1,n+1)$. Then $\gamma(t)$ takes values in $Aut(M)$. As a consequence, the universal covering of $M$ is one of the Riemann surfaces stated in Theorem \ref{class-equi} and $\gamma(t)$ is one of the corresponding one-parameter {groups.} In particular, there exists a local complex coordinate $z$ such that $Re(z)=t$ on $M$. As a corollary,  equivariant minimal surfaces in \cite{HL} and equivariant Willmore surfaces in \cite{FP}  are all included in our definitions here.

\end{remark}

\begin{remark} Rotationally equivariant Willmore surfaces appear naturally in the study of equivariant Willmore $2$-spheres and equivariant Willmore $\mathbb RP^2$, which we will consider in another publication.
\end{remark}
%%%%%%%%%%%%%%%%%%%%%%%%%

\section{TE maps}

In this section, unless the contrary is stated explicitly, we will always consider TE maps defined on the open strip
$\St$ \eqref{eq-strip} containing the real axis.

\subsection{Frames for TE maps}

This section is to show that  for any  TE map $f:\St \rightarrow G/K$ of Definition \ref{def-equi} there exists
a lift $F :\St \rightarrow G$ which is quasi-invariant (see definition below).

First we will lift the map $f$ to a map $F$ into $G$.
\begin{lemma}  If $f: \St \rightarrow G/K$  is  a differentiable map, then
there exists a map $F: \St \rightarrow G$ such that the following diagram is commutative.
\begin{equation*}  \xymatrix{
&\ G \ar[d]^{\pi} \\
                      \mathbb{S}\ \ar[r]_{\ \ f \ }\ar[ur]^{F}  &\ \  G/K\ .
}\end{equation*}
\end{lemma}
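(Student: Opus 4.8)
The plan is to treat this purely as a smooth lifting problem for the principal bundle $\pi : G \to G/K$ over the contractible base $\St$; no equivariance is needed at this stage, only the existence of a smooth $F$ with $\pi \circ F = f$. Since $K$ is a closed subgroup of the Lie group $G$, the quotient $G/K$ is a smooth manifold and $\pi : G \to G/K$ is a principal $K$-bundle; in particular $\pi$ is a surjective submersion and admits local smooth sections: for every $q_0 \in G/K$ there is an open neighborhood $V \subset G/K$ and a smooth map $s : V \to G$ with $\pi \circ s = \mathrm{id}_V$.

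First I would form the pullback bundle $f^{*}G = \{(p,g) \in \St \times G : f(p) = \pi(g)\}$, which is a principal $K$-bundle over $\St$. Composing any local section $s$ of $\pi$ with $f$ produces a local lift $s \circ f$ over $f^{-1}(V)$, so local lifts always exist, and the whole content of the lemma is that they can be glued to a global one. The key point is that $\St$ is smoothly contractible — it is diffeomorphic to $\R^2$ — and a (principal) fiber bundle over a paracompact contractible base is trivial. Hence $f^{*}G$ admits a global smooth section $\sigma : \St \to f^{*}G$, $\sigma(p) = (p, F(p))$, and the resulting smooth map $F : \St \to G$ satisfies $\pi(F(p)) = f(p)$ for all $p$, which is exactly the asserted commutativity.

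The step I expect to be the main obstacle is the passage from local to global, i.e. justifying triviality of the pullback bundle over the contractible strip in the smooth category and the consequent smoothness of $F$. If one prefers to avoid invoking the general triviality theorem, I would instead glue local lifts directly, exploiting the product structure $\St \cong \R \times (-\pi,\pi)$: choose a locally finite cover of $\St$ by sets $f^{-1}(V_\alpha)$ carrying local lifts $F_\alpha$, note that on overlaps $F_\alpha = F_\beta \cdot k_{\alpha\beta}$ with smooth $K$-valued transition maps $k_{\alpha\beta}$, and correct the $F_\alpha$ successively; because $\St$ is simply connected and contractible the cocycle $\{k_{\alpha\beta}\}$ is a coboundary, so the corrections can be made consistently. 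Either way the only real work lies in controlling these $K$-valued transitions; once a global section is in hand, smoothness of $F$ is immediate from smoothness of $s$, $f$, and $\sigma$.
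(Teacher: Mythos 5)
Your argument is correct and is essentially the paper's own proof: both pull back the principal $K$-bundle $\pi:G\to G/K$ along $f$ to a bundle over the contractible strip $\St$, invoke triviality to obtain a global section, and compose to get the lift $F$. The additional discussion of gluing local lifts is a fine elaboration but adds nothing beyond what the paper's one-line contractibility argument already covers.
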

\begin{proof}
Consider the pullback $\hat\pi:\hat{G}=f^*G\rightarrow\St$ of the bundle $\pi:G{\rightarrow}G/K$ by $f$.
Since $\St$ is contractible, there is a section $\sigma$ of the bundle $\hat{G}\rightarrow\St$
such that the following diagram commutes
\begin{equation*}  \xymatrix{\hat{G}\ar[d]^{\hat\pi} \ar[r]_{\ \ \hat{F} \ }  &\ G \ar[d]^{\pi} \\
                      \mathbb{S}\ \ar[r]_{\ \ f \ ~~} &\ \  G/K\ .
}\end{equation*}
So there exists a lift $F=\hat{F}\circ\sigma:\St\rightarrow G$ of $f$.
\end{proof}
\begin{corollary} Assume $f$ is TE, i.e. $f$ satisfies
\begin{equation}
f(p+t) = R(t) f(p)
\end{equation}
then $F$ satisfies the relation
\begin{equation} \label{equi-F-gen}
F(p + t)=R(t)F(p)h(t,p)
\end{equation}
with $h(t,p)\in K$ for all $z\in \St$ and $t \in \R$.
\end{corollary}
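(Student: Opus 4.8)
The plan is to deduce the transformation rule for $F$ directly from the equivariance of $f$ together with the fact that $F$ is a lift, i.e. $\pi \circ F = f$. The essential point is that the fibres of $\pi: G \to G/K$ are exactly the right cosets $gK$, so two elements of $G$ project to the same point of $G/K$ if and only if they differ on the right by an element of $K$. First I would fix $t \in \R$ and $p \in \St$ and compare the projections of the two group elements $F(p+t)$ and $R(t)F(p)$. On the one hand, $\pi(F(p+t)) = f(p+t)$ since $F$ is a lift. On the other hand, $\pi(R(t)F(p)) = R(t).\pi(F(p)) = R(t).f(p)$, using that left multiplication by $R(t) \in G$ descends to the $G$-action on $G/K$. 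By the TE hypothesis $f(p+t) = R(t)f(p)$, so these two projections coincide.

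Since $\pi(F(p+t)) = \pi(R(t)F(p))$, the two elements $F(p+t)$ and $R(t)F(p)$ lie in the same fibre of $\pi$, hence in the same right $K$-coset. I would then define
\begin{equation*}
h(t,p) := \bigl(R(t)F(p)\bigr)^{-1} F(p+t) = F(p)^{-1} R(t)^{-1} F(p+t),
\end{equation*}
and the coset observation gives exactly $h(t,p) \in K$. Rearranging this identity yields $F(p+t) = R(t)F(p)h(t,p)$, which is \eqref{equi-F-gen}. That $h$ is a map into $K$ is the content of the claim; its smoothness in $(t,p)$ follows from the smoothness of $F$ and $R$, since $h$ is built from them by multiplication and inversion, which are smooth operations in the matrix Lie group $G$.

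The only subtle point, and the step I would be most careful about, is the identification of the fibres of $\pi$ with right $K$-cosets and the compatibility of left multiplication on $G$ with the $G$-action on $G/K$. These are standard facts about the homogeneous space $G/K$ — namely $\pi(g) = \pi(g')$ iff $g' \in gK$, and $g.\pi(g') = \pi(gg')$ — so there is no genuine obstacle here; the argument is essentially a diagram chase in the principal $K$-bundle $G \to G/K$. One could alternatively phrase the whole thing as saying that $p \mapsto R(t)^{-1}F(p+t)$ is a second lift of $f$ for each fixed $t$, and any two lifts of the same map into $G/K$ differ by a $K$-valued function on the right, which immediately produces $h(t,\cdot)$.
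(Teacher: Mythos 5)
Your argument is correct and is exactly the one the paper leaves implicit: the corollary is stated without proof, being immediate from the fact that $F(p+t)$ and $R(t)F(p)$ project to the same point of $G/K$ and hence differ on the right by an element of $K$. Your definition $h(t,p) = F(p)^{-1}R(t)^{-1}F(p+t)$ is the intended one, so there is nothing to add.
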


\begin{definition}
The map $F$ just defined will be called {\it a frame for $f$}.
\end{definition}

In (\ref{equi-F-gen}) the quantity $h$ varies with the choice of $F$. For many purposes it is {convenient} not having to deal with the presence  of $h$ in equation (\ref{equi-F-gen}). Fortunately, we can remove $h$ for all TE maps. First we note that by a straightforward computation it is easy to verify that the property of
$R(t)$ being a  one--parameter group translates for $h$ into the ``cocycle condition":
\begin{equation} \label{crossedhom}
h(t+s,z)=h(s,z)  h(t,z+s),\ \hbox{ $z \in \St$ and $s,t \in \R$.}
\end{equation}
Here we write $h,$ as usual,  ``as a function of $z$'', but $h$ actually is, of course, a differentiable function  of $z$ and $\bar z$ and we interpret our way of writing this as
``$f$ is function of $x$ and $y$ ''or as ``$f$ is a function of $z$ and $\bar{z}$''. We claim that $h$ is a ``coboundary":

\begin{theorem} \label{cocycle-plus}
Retaining the assumptions and the notation introduced above, there exists some function $\rho: \St \rightarrow K$ such that for all
$z=x+iy \in \St$ and all $t \in \R$ we have
\begin{equation}
h(t,z)=\rho(z)\rho(z+t)^{-1}, \hbox{ with } \rho(z)=h(x,iy)^{-1}.
\end{equation}
\end{theorem}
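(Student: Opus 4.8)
The plan is to treat the explicit formula for $\rho$ given in the statement as an ansatz and verify directly that it trivializes the cocycle; because the desired $\rho$ is pinned down (up to a $z$-independent factor) by requiring it to be trivial on a cross-section of the translation flow, essentially no guessing is involved beyond reading off the right consequence of \eqref{crossedhom}. The natural cross-section is the imaginary axis $\{x=0\}$, and indeed the proposed $\rho(z)=h(x,iy)^{-1}$ satisfies $\rho(iy)=h(0,iy)^{-1}=e$: here I first record the normalization hidden in \eqref{crossedhom}, namely that putting $s=0$ (or $t=0$) gives $h(0,w)=e$ for every $w\in\St$, so that the coboundary relation is at least consistent at $t=0$, where $\rho(z)\rho(z)^{-1}=e=h(0,z)$.

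The heart of the matter is a single substitution. Writing $z=x+iy$, I would evaluate the cocycle condition \eqref{crossedhom} at the base point $w=iy\in\St$ with increments $s=x$ and $t$, which yields
\begin{equation*}
h(x+t,\,iy)=h(x,\,iy)\,h(t,\,x+iy).
\end{equation*}
Since a real translation leaves the imaginary part unchanged, one has $z+t=(x+t)+iy$, so $\rho(z)=h(x,iy)^{-1}$ and $\rho(z+t)^{-1}=h(x+t,iy)$ share the same base point $iy$. Solving the displayed identity for $h(t,x+iy)$ then gives
\begin{equation*}
h(t,z)=h(x,iy)^{-1}h(x+t,iy)=\rho(z)\,\rho(z+t)^{-1},
\end{equation*}
which is exactly the asserted coboundary relation.

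It remains only to confirm the target and regularity requirements. Each value of $h$ lies in the closed subgroup $K$, and $K$ is closed under inversion, so $\rho(z)=h(x,iy)^{-1}\in K$; smoothness of $\rho$ in $(x,y)$ follows from the smoothness of $h$ together with the smooth dependence of the base point $iy$ on $z$. I expect no serious obstacle here: the entire content is the correct choice of base point $iy$ in \eqref{crossedhom}, which collapses the two-variable cocycle to a one-variable coboundary. The one point deserving a word of care is that $iy$ indeed lies in $\St$ for every $z\in\St$ — which holds precisely because $-\pi<\mathrm{Im}\,z<\pi$ is preserved under real translation — so that $h(x,iy)$ is well defined; this is exactly what makes the strip $\St$, rather than an arbitrary domain, the natural habitat for the statement.
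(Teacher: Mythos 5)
Your proof is correct and follows essentially the same route as the paper: both evaluate the cocycle identity \eqref{crossedhom} at the base point $iy$ with increments $s=x$ and $t$, then solve for $h(t,x+iy)=h(x,iy)^{-1}h(x+t,iy)=\rho(z)\rho(z+t)^{-1}$. Your added checks (the normalization $h(0,w)=e$, that $iy\in\St$, and that $\rho$ takes values in $K$) are harmless elaborations of points the paper leaves implicit.
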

\begin{proof} Reducing $z$ to $iy$ we have
\[h(t,iy)h(s,t+iy)=h(t+s,iy)\]
whence $h(s,t+iy)=h(t,iy)^{-1}h(t+s,iy).$
Put
\[\rho(z)=h(x,iy)^{-1}.\]
 Then
$\rho(z)\rho(z+t)^{-1}=h(x,iy)^{-1} h(x+t,iy)=h(t,x+iy). $
\end{proof}

\begin{remark}
Note that we can replace $\rho (z) $ above by $\rho (z) \rho (0)^{-1}$ and thus we can assume w.l.g. $\rho (0) = I $, if this is desired.
\end{remark}

From this we derive the important

\begin{theorem} \label{prop-lift}
For every TE map $f$ defined on a strip $\St$ we can assume w.l.g. that  $f$ has a  frame $F$
which satisfies the relation
\begin{equation} \label{equi-F}
F(z + t) = R(t) F(z)
\end{equation}
for all $z \in \St$ and $t \in \R$.
\end{theorem}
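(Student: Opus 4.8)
The plan is to produce the desired frame by correcting an arbitrary frame with a right $K$-valued gauge transformation manufactured from the coboundary function already supplied by Theorem \ref{cocycle-plus}. The entire content of \eqref{equi-F} is that the factor $h$ appearing in the general transformation law \eqref{equi-F-gen} can be gauged away, and since Theorem \ref{cocycle-plus} identifies $h$ as a coboundary, what remains is a short and essentially mechanical verification.

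First I would start from an arbitrary frame $F$ of $f$, which by the corollary established above satisfies $F(z+t) = R(t) F(z) h(t,z)$ with $h(t,z) \in K$, the function $h$ obeying the cocycle condition \eqref{crossedhom}. Theorem \ref{cocycle-plus} then furnishes a differentiable map $\rho: \St \rightarrow K$ with
\[
h(t,z) = \rho(z)\rho(z+t)^{-1} \qquad \text{for all } z \in \St,\ t \in \R.
\]
Next I would define the modified frame $\tilde F(z) := F(z)\rho(z)$. Because $\rho(z) \in K$, right multiplication by $\rho(z)$ does not move the image point in $G/K$, so $\pi \circ \tilde F = \pi \circ F = f$; hence $\tilde F$ is again a frame for $f$, and it is differentiable as a product of differentiable maps.

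Finally I would substitute to check the transformation law, the two $\rho$-factors cancelling:
\[
\tilde F(z+t) = F(z+t)\rho(z+t) = R(t) F(z) h(t,z)\rho(z+t) = R(t) F(z)\rho(z)\rho(z+t)^{-1}\rho(z+t) = R(t)\tilde F(z),
\]
which is exactly \eqref{equi-F}. Renaming $\tilde F$ back to $F$ gives the assertion, and the phrase "w.l.g." in the statement is precisely the replacement of the original frame by $\tilde F$.

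The step demanding any genuine thought is not in this argument but was already carried out in Theorem \ref{cocycle-plus}, namely the representation of the cocycle $h$ as a coboundary. Here the only points to watch are bookkeeping ones: the correction must be applied on the \emph{right} (so that it is a legitimate change of frame fixing $f$ pointwise in $G/K$), and the coboundary identity must be oriented so that $\rho(z+t)^{-1}$ meets $\rho(z+t)$ and cancels rather than compounds. If desired, one may additionally normalize $\rho(0) = I$ using the remark following Theorem \ref{cocycle-plus}, but this is not required for \eqref{equi-F} itself.
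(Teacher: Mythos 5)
Your proof is correct and follows exactly the paper's own argument: both replace $F$ by $F\rho$ with $\rho$ supplied by Theorem \ref{cocycle-plus} and verify that the two $\rho$-factors cancel against the coboundary form of $h$. Your version simply spells out the cancellation and the check that right multiplication by a $K$-valued function preserves the projection to $G/K$, which the paper leaves implicit.
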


\begin{proof}
Choosing $\rho$ as in the theorem just above and replacing $F$ by $\hat{F}=F\rho$ we obtain
$
\hat{F}(z + t) = R(t) \hat{F}(z)
$
for all $z \in \St$ and $t \in \R$. Writing again $F$ for $\hat{F}$ we obtain the claim.
\end{proof}

\begin{remark}\
\begin{enumerate}
\item  In \cite{Bu-Ki} ``equivariant primitive harmonic maps" have been considered.
The definition of the notion of ``equivariant" used in \cite{Bu-Ki} requires for the frame the equation \eqref{equi-F}.  In our definition we start from a natural geometric notion of equivariance on the level
of "immersions". Then, by Theorem \ref{prop-lift}, it follows that one can assume w.l.g. that the equation
\eqref{equi-F} assumed in  \cite{Bu-Ki} actually holds.

\item Frames as above will be called {\it quasi-invariant}.  Note that by (\ref{equi-F}) we can write
 \begin{equation}\label{equi-F-DL}
    F(x,y) = \exp(xD)\cdot L(y)
 \end{equation}
with $R(x) = exp(xD)$  for some $D\in\mathfrak{g}=Lie(G)$ and $L(y) = F(0,y)$.
\end{enumerate}
\end{remark}

%%%%%%%%%%%%%%%%%

\subsection{Equivariant maps and Maurer-Cartan forms of quasi-invariant  frames}

Since we are interested in a detailed study of TE maps, we are interested in as much information as possible about the frame $F$ and/or its
Maurer-Cartan form $\alpha = F^{-1} \dd F$. The following theorem generalizes a result of Burstall and Kilian  (\cite{Bu-Ki}) \label{BuKi-y}.
\begin{theorem}
Let $\St$ be a strip as above and let $F$ denote the frame of some
differentiable  map $f: \St \rightarrow G/K$.
Then $f$ is TE if and only if the Maurer-Cartan form $ F^{-1} \dd F $of $F$ only depends on $y$, where we write  $ z = x + i y$ for points in $\St$.
\end{theorem}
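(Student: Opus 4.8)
The plan is to prove both implications by exploiting the elementary fact that two differentiable maps into a Lie group sharing the same Maurer--Cartan form differ by left multiplication with a constant group element. Concretely, if $F_1, F_2 : \St \to G$ satisfy $F_1^{-1}\dd F_1 = F_2^{-1}\dd F_2$, then $\dd(F_2 F_1^{-1}) = 0$, so by connectedness of $\St$ there is a constant $C \in G$ with $F_2 = C F_1$; conversely, left translation by a constant leaves the Maurer--Cartan form unchanged. This single observation drives the whole argument, and the only genuine care is needed because $\alpha = F^{-1}\dd F$ is not an invariant of $f$ alone but depends on the chosen lift.

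For the ``only if'' direction I would assume $f$ is TE and, invoking Theorem \ref{prop-lift}, pass w.l.g. to a quasi-invariant frame, so that $F(z+t) = R(t) F(z)$ for all real $t$. Since $R(t)$ is constant in $z$, the maps $F(\cdot + t)$ and $F(\cdot)$ have the same Maurer--Cartan form; on the other hand that form is exactly $\alpha$ evaluated at $z + t$. Because $t$ is real, $z + t = (x+t) + iy$, so $\alpha(x+t, y) = \alpha(x, y)$ for every $t$, i.e. $\alpha$ is invariant under all real translations and hence depends on $y$ only. Equivalently one can write $F(x,y) = \exp(xD)\,L(y)$ as in \eqref{equi-F-DL} and compute directly that $\alpha = \mathrm{Ad}(L(y)^{-1}) D\, \dd x + L(y)^{-1} L'(y)\, \dd y$.

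For the ``if'' direction I would start from the hypothesis that $\alpha = F^{-1}\dd F$ depends only on $y$. Then for each fixed real $t$ the translate $F(\cdot + t)$ has Maurer--Cartan form $\alpha(x+t, y) = \alpha(x, y)$, identical to that of $F$; by the constant-difference lemma there is a constant $R(t) \in G$, independent of $z$, with $F(z+t) = R(t) F(z)$. Applying the projection $\pi$ and using its $G$-equivariance gives $f(z+t) = R(t) f(z)$, which is the desired equivariance. It remains to check that $t \mapsto R(t)$ is a one-parameter group: differentiability in $t$ follows from that of $F$ via $R(t) = F(z+t) F(z)^{-1}$ at a fixed $z$, and the group law is obtained by writing $R(s+t) F(z) = F(z+s+t) = R(s) F(z+t) = R(s) R(t) F(z)$ and cancelling $F(z)$.

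I expect the main obstacle to be bookkeeping rather than conceptual: one must keep straight that the Maurer--Cartan characterization concerns a quasi-invariant frame, not an arbitrary lift, so the ``only if'' direction must first normalize $F$ through Theorem \ref{prop-lift}. Once that point is acknowledged, both implications reduce to the constant-difference lemma together with the short verification that the resulting family $R(t)$ is a one-parameter subgroup.
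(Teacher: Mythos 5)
Your proof is correct; the ``only if'' direction is essentially the paper's, but your converse takes a genuinely different route. For the forward implication both you and the paper normalize to a quasi-invariant frame via Theorem \ref{prop-lift}, write $F(x,y)=\exp(xD)L(y)$, and read off that $\alpha=L(y)^{-1}DL(y)\,\dd x+L(y)^{-1}\partial_yL(y)\,\dd y$ depends only on $y$ --- and you rightly flag that without this normalization the implication would fail for an arbitrary lift, since right-multiplying $F$ by a $K$-valued function of $x$ and $y$ changes $\alpha$. For the converse the paper works from the integrable one-form $\alpha=A(y)\,\dd x+B(y)\,\dd y$: it solves the ODE $L^{-1}\partial_yL=B$ on the $y$-interval of the strip, invokes the integrability condition to show $\partial_y\bigl(L(y)A(y)L(y)^{-1}\bigr)=0$, hence $A(y)=L(y)^{-1}DL(y)$ for a constant $D$, and then concludes $F=W\exp(xD)L(y)$ by uniqueness of solutions of $\dd F=F\alpha$; this yields an explicit normal form and shows, slightly more generally, that \emph{any} integrable one-form of this shape integrates to an equivariant map. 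You instead apply the constant-difference lemma directly to $F$ and its real translates $F(\cdot+t)$, which share the Maurer--Cartan form because $\alpha$ is invariant under real translations; this produces the constant $R(t)$ in one step, and the group law and smoothness of $t\mapsto R(t)$ follow from $R(s+t)F=R(s)R(t)F$ together with $R(t)=F(z_0+t)F(z_0)^{-1}$. Your route is shorter and needs no separate appeal to integrability (automatic here, since $\alpha$ is already the Maurer--Cartan form of the given $F$); the paper's construction buys the explicit decomposition $F=W\exp(xD)L(y)$, which is what gets reused in the subsequent loop group discussion.
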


\begin{proof}
We  follow closely  the proof given in \cite{Bu-Ki}.

\vspace{3mm}
{\bf{Map to MC form:}}\hspace{3mm} We know
 $ F(x,y) = \exp(xD)\cdot L(y)$
  for the frame of the equivariant map $f$ from $\St$ into $G/K$, with $\exp(x D)=R(x)$ and $L(y)=F(0,y)$.
Then
\begin{equation}\label{MC}
\alpha = F^{-1} \dd F = L(y)^{-1} D L(y) \dd x + L(y)^{-1} \partial_y L(y) \dd y
\end{equation}
and $\alpha$ is integrable and its coefficients only depend on $y$.\vspace{3mm}

{\bf{MC form to  map:}}\hspace{3mm} Assume we have an integrable one-form $\alpha$ of the form
\begin{equation}\label{MCgen}
\alpha = A(y)\dd x + B(y) \dd y.
\end{equation}Comparing (\ref{MCgen}) to  (\ref{MC}) it is clear that we need to solve the equation $L(y)^{-1} \partial_y L (y) =B(y)$ with some function $L(y)$  into  $G$,
where $y$ varies in the interval $\St _y$ defining the strip $\St$ in $y-$direction.
It is possible to solve this differential equation on all of $\St _y$, since the equation is an ordinary differential equation and the domain of definition is an interval. Using such a solution  it is straighforward to verify  $\partial_y (L(y) A(y) L(y)^{-1} ) = 0$
in view of the integrability condition of $\alpha$.  Therefore,  $A(y)$ has the form as required. As a consequence, any solution to $F(x,y)^{-1} \dd F(x,y) = \alpha (x,y)$  in $G$ as well as $ \exp(xD)L(y)$ have the same Maurer-Cartan form, namely $\alpha$. As a consequence we obtain
\begin{equation}\label{solalpha1}
F(x,y) = W \exp(xD) L(y)
\end{equation}
for some $W \in G$.
But then
$
F(x,y) =  \exp(x WDW^{-1}) (W L(y))
$
shows that $F$ produces an equivariant map into $G/K$ by projection.
\end{proof}

\begin{remark}
The freedom in the choice of $W$ reflects the freedom to move around the image of $\St$ in $G/K$.
\end{remark}

%%%%%%%%%%%%%%%%%%%%%%%%%%%%%%

\section{TE primitive harmonic maps and their loop group formalism}

In this section we will collect the basic theory concerning
TE (translationally equivariant) primitive harmonic maps and their loop group formalism. Then we will show how it applies to Willmore surfaces.
%%%%%%%%%%%%%%%%%%%%%
\subsection{Basics about $k-$symmetric spaces}

We will restrict our attention to   primitive harmonic maps. \emph{ We   always assume that $G$ is a real semisimple Lie group and $K$ a closed subgroup  containing the center of $G$.
Assume furthermore  that  for the Lie group $G$ there exists an  automorphism $\sigma$ of $G$ of order $k$  such that $Fix_\sigma (G)^0 \subset K \subset Fix_\sigma (G)$, where $H^0$ denotes  the connected component of $H$ containing the identity element $e \in H$ for a Lie group $H$.
In this setting the space $G/K$ will be called a ``$k-$symmetric space''. \emph{Moreover, in what follows, we will always assume that $G/K$ carries a metric which is induced
from a bi-invariant metric on $G$. Also,  we will  primarily consider maps
$f:M \rightarrow G/K$ from a Riemann surface $M$ which are smooth and conformal  relative to a  standard metric on $M$ and the metric on $G/K$ induced from a bi-invariant metric on $G$.} We will also assume that $f:M \rightarrow G/K$ is harmonic with respect to the metrics mentioned above.
In this setting $M$ is  orientable. The case of maps from non-orientable surfaces  to $k-$symmetric spaces will be discussed in Section 6 (see also  \cite{DoWaSym2}).
}

 Let $\sigma:G\rightarrow G$ be the  automorphism of order $k\geq 2$ which defines $G/K$.
Let  $\mathfrak{g}$ be  the Lie algebra of $G$. Then $\sigma$ induces an  automorphism of order $k$ on $\mathfrak g$, also called $\sigma$. Moreover, $\sigma$ gives a $K-$stable
$\Z_k$ grading $\mathfrak g^{\C}=\sum_{j\in \mathbb{ Z}_k}\mathfrak g_j,$ where
$\mathfrak g_j$ is the $\omega^j-$eigenspace of $\sigma,$ where we set  $\omega=e^{2\pi i/k}$.
Set $\mathfrak m=\mathfrak g\cap (\sum_{j\in\mathbb {Z}_k,j\neq0}\mathfrak{g}_j)$.
Then we have the  the decomposition of the Lie algebra $\mathfrak g$:
\[\mathfrak{g} = \mathfrak{k} + \mathfrak{m}.\]
The crucial Lie algebra for the approach of this paper is the  $\sigma-$twisted Lie algebra $\Lambda \mathfrak{g}_{\sigma}$ is defined by
\begin{equation}
\Lambda \mathfrak{g}_\sigma = \sum_{j \in \mathbb{Z}} \lambda^{-j} X_j,
\end{equation}
where $X_j \in \mathfrak{g}_m$ with $m = j \mod k$ and $\lambda \in S^1$ and $\C^*$ respectively.

%%%%%%%%%%%%%%%%%%%
\subsection{Primitive harmonic maps}
Let $f:M \rightarrow G/K$ be a map from $M$ to $G/K$. Let $F$ denote any frame for $f$ and $\alpha$ its Maurer-Cartan form {\bf ( M-C form)} , $\alpha = F^{-1} \dd F$.
Then $\alpha$ decomposes in two ways: firstly, we can decompose the one-form $\alpha$ in the form $\alpha = \alpha^{(1,0)} + \alpha^{(0,1)}$, but, secondly,  also with regard to the eigenspace decomposition of $\sigma$  (by abuse of notation we will write $\sigma$ for $\dd \sigma$).

\begin{definition}
A harmonic map
$f:M \rightarrow G/K$ is  called  {\bf primitive harmonic,} if the Maurer-Cartan form
$\alpha = F^{-1} \dd F$ has the form
\begin{equation}
\alpha = \alpha_{-1} + \alpha_{0} + \alpha_{1}
\end{equation}
with $\alpha_m \in \mathfrak g_m.$ and for which
\begin{equation} \label{prim}
\alpha_\lambda  = F^{-1} \dd F = \lambda^{-1} \alpha^\prime_\mathfrak{m} + \alpha_\mathfrak{k}+  \lambda \alpha''_\mathfrak{m}
\end{equation}
is integrable for all $\lambda \in S^1$.
\end{definition}
\begin{remark}\
\begin{enumerate}\item If $k >2$, then the  condition (\ref{prim}) above is true if and only if it is true for $\lambda =1$.
\item If $k=2$, then the condition (\ref{prim})  above needs to be true for at least three values of $\C^*$
in order to be true for all $\lambda \in S^1$.
\item It is well known that  primitive harmonic maps are real analytic.
\end{enumerate}
\end{remark}

%%%%%%%%%%%%%%%%%%%
\subsection{The loop group formalism for primitive harmonic maps}
%%%%%%%%%%%%%%%%%%%

Let us recall from \cite{DPW}, \cite{Bu-Ki} the basic loop group formalism for harmonic maps into  $k-$symmetric spaces $G/K$.

We start from some map $f$ as above, with $M=\St$ being an open strip containing the real axis. Let's assume from here on that the map $f$ is a primitive harmonic
map into $G/K$. Note that in this case $G$ is a group of isometries of $G/K$, relative to the chosen, definite or indefinite, metric.

Let $f:M\rightarrow G/K$ be a map.  Choose a frame $F$ of $f$ with $F(0,0) = I$ and
 set \[F^{-1}\dd F=\alpha=\alpha'\dd z+\alpha''\dd\bar{z}=(\alpha_{\mathfrak{m}}'+\alpha_{\mathfrak{k}}')\dd z+(\alpha_{\mathfrak{m}}''+\alpha_{\mathfrak{k}}'')\dd\bar{z}.\]
 Introduce $\lambda\in S^1$ via the M-C form of $F$ as usual (see e.g. \cite{DPW}, \cite{Bu-Ki})
\begin{equation}\label{form1}
\alpha_{\lambda} =(\lambda^{-1}\alpha_{\mathfrak{m}}'+\alpha_{\mathfrak{k}}')\dd z+(\lambda\alpha_{\mathfrak{m}}''+\alpha_{\mathfrak{k}}'')\dd\bar{z}= \mathbb{A}_{\lambda} \dd u + \mathbb{B}_{\lambda} \dd v,
\end{equation}
By definition,  $f$ being primitive harmonic is equivalent to  that
 $\alpha_{\lambda}$ is  integrable for all $\lambda\in S^1$.

 The solution
 \[F(z,\bar z,\lambda)^{-1}\dd F(z,\bar z,\lambda)=\alpha_{\lambda}, \ F(0,0,\lambda)=\mathbf{e}\]
 is called the extended frame of $f$ normalized at $z=0$.
Here $\mathbf{e}$  is the Identity element of $G$ and $F(z,\bar z,\lambda)$ takes values in the twisted loop group $\Lambda G_{\sigma}$ \cite{DPW,Bu-Ki}:
\[\Lambda G^{\mathbb{C}}_{\sigma} =\{\gamma:S^1\rightarrow G^{\mathbb{C}}~|~ ,\
\sigma \gamma(\lambda)=\gamma(\omega\lambda),\lambda\in S^1  \},\]\[ \Lambda G_{\sigma}   =\{\gamma\in \Lambda G^{\mathbb{C}}_{\sigma}
|~ \gamma(\lambda)\in G, \hbox{for all}\ \lambda\in S^1 \}.\]
 Performing a Birkhoff decomposition \cite{DPW, Bu-Ki} of $F(z,\bar z,\lambda)$ near $z=0$, we obtain
\[F(z,\bar z,\lambda)=F_-(z,\lambda)F_+(z,\bar z,\lambda)\]
where \[F_-(z,\lambda)\in\Lambda^- G^{\C}_{\sigma}=
\{\gamma\in \Lambda G^{\mathbb{C}}_{\sigma}~
|~ \gamma \hbox{ extends holomorphically to } |\lambda|>1 \cup\{\infty\} \}\] is meromorphic in $z$ and $F_-(0,\lambda)=\mathbf e$, and
\[F_+(z,\bar z,\lambda)\in\Lambda^+ G^{\C}_{\sigma}=\{\gamma\in \Lambda G^{\mathbb{C}}_{\sigma}~
|~ \gamma \hbox{ extends holomorphically to the disk}\}.\]
Then the normalized potential $\eta$ of $f$ is defined to be the M-C form of $F_-(z,\lambda)$:
\begin{equation}
\eta:=F_-(z,\lambda)^{-1}\dd F_-(z,\lambda)=\lambda^{-1}\eta_{-1}\dd z,\end{equation}
with $\eta_{-1}$ taking values in $\mathfrak m^{\C}.$

Conversely, beginning with a normalized potential of the above form, solving the ODE $F_-(z,\lambda)^{-1}\dd F_-(z,\lambda)=\eta$,  $F_-(0,\lambda)=\mathbf e$, one obtain a meromorphic extended frame $F_-(z,\lambda)$. Performing an Iwasawa decomposition \cite{DPW, Bu-Ki} of $F_-(z,\lambda)$ near $z=0$, we obtain
\[F_-(z,\lambda)=\tilde F(z,\bar z,\lambda)\tilde F_+(z,\bar z,\lambda)\]
with $\tilde F(z,\bar z,\lambda)\in \Lambda G_{\sigma}$, $\tilde F(0,0,\lambda)=\mathbf e$ and $\tilde F_+(z,\bar z,\lambda)\in \Lambda ^+G^{\mathbb{C}}_{\sigma}$. Then $\tilde f(z,\bar z, \lambda)=\tilde F(z,\bar z,\lambda)\mod K$ is a family of primitive harmonic maps into $G/K$. This is the so called DPW method for primitive harmonic maps \cite{DPW, Bu-Ki}.

On the other hand, one can do a different type of  decomposition of $F(z,\bar z,\lambda)$ near $z=0$ (See e.g. \cite{DPW, Bu-Ki}):
\[F(z,\bar z,\lambda)=C(z, \lambda)\hat F_+(z,\bar z,\lambda),\]
such that
$C(z, \lambda)$ is holomorphic in $z$, $C(0,\lambda)=\mathbf e$, and $\hat F_+(z,\bar z,\lambda)\in\Lambda^+ G^{\C}_{\sigma}$. Then one obtains a holomorphic potential
which is  the M-C form of $C(z,\lambda)$:
\begin{equation}
\xi:=C(z,\lambda)^{-1}\dd C(z,\lambda)=\sum_{j=-1}^{\infty}\lambda^{j}\xi_{j}\dd z\in\Lambda\mathfrak{g}^{\C}_{\sigma}:=Lie(\Lambda{G}^{\C}_{\sigma}).\end{equation}

Conversely, beginning with a holomorphic potential of the above form, solving the ODE $C(z,\lambda)^{-1}\dd C(z,\lambda)=\xi$,  $C(0,\lambda)=\mathbf e$, one obtains a holomorphic extended frame $C(z,\lambda)$. Performing an Iwasawa decomposition \cite{DPW, Bu-Ki} of $C(z,\lambda)$ near $z=0$, we obtain
\[C(z,\lambda)=\tilde F(z,\bar z,\lambda)\tilde F_+(z,\bar z,\lambda)\]
with $\tilde F(z,\bar z,\lambda)\in \Lambda G_{\sigma}$, $\tilde F(0,0,\lambda)=\mathbf e$ and $\tilde F_+(z,\bar z,\lambda)\in \Lambda ^+G^{\mathbb{C}}_{\sigma}$. Then $\tilde f(z,\bar z, \lambda)=\tilde F(z,\bar z,\lambda)\mod K$ is a family of primitive harmonic maps into $G/K$.

%%%%%%%%%%%%%%%%%%%%%%%%%%%%%%%%%%%%
\subsection{{The loop group formalism for TE primitive harmonic maps}}
%%%%%%%%%%%%%%%%%%%

Now let us focus on the case of $f$ being a TE primitive harmonic map. Choose a frame $F$ of $f$ satisfying \eqref{equi-F} and  $F(0) = I$.
Clearly, $\alpha_{\lambda}$ only depends on $v$. Setting
\begin{equation}\label{form2}
\mathbb{A}_{\lambda}=\lambda^{-1}\alpha_{\mathfrak{m}}'+\alpha_{\mathfrak{k}}'+\alpha_{\mathfrak{k}}''+\lambda\alpha_{\mathfrak{m}}'',\ \hspace{3mm}
\mathbb{B}_{\lambda}=i\left(\lambda^{-1}\alpha_{\mathfrak{m}}'+\alpha_{\mathfrak{k}}'-\alpha_{\mathfrak{k}}''-\lambda\alpha_{\mathfrak{m}}''\right),
\end{equation}
we have
$\alpha_{\lambda} = \mathbb{A}_{\lambda} \dd u + \mathbb{B}_{\lambda} \dd v$ with $\mathbb{A}_{\lambda}$ and $\mathbb{B}_{\lambda} $ depending only on $v$.
Therefore from
$\alpha_{\lambda}$, we obtain an equivariant map $f(u,v,\lambda)$ with
moving frame
\begin{equation}
F(u,v,\lambda) = \exp( u  D(\lambda) ) L(v,\lambda),
\end{equation}
where $D(\lambda)\in\Lambda g_{\sigma}$ satisfies
$D(\lambda=1)=D$ and $ L(v,\lambda)$ is a solution to
\[L(v,\lambda)^{-1}\dd L(v,\lambda)=\mathbb{B}_{\lambda}\dd v,\ L(v,\lambda=1)=L(v),\]
and where we have assumed $F(z_0,\bar{z_0}, \lambda )= I$.  For the frame $F$ we have
\begin{equation*} \label{equi-F-loop}
F(z+t,\bar{z}+t,\lambda)= \exp( (u+t) D(\lambda)) L(v,\lambda)=
\chi_t(\lambda)F(z,\bar{z},\lambda)
\end{equation*}
with $\chi_t(\lambda)=e^{t D(\lambda)}$ being a one--parameter group in $\Lambda G_{\sigma}$.
 Setting $t=-u,z=u+iv$, we derive
\begin{equation}  \label{Iwadec}
\begin{split}F(z,\bar{z},\lambda)&=e^{uD(\lambda)}F(iv,-iv,\lambda)=e^{(u+iv)D(\lambda)}V(v,\lambda)\\
\end{split}
\end{equation}
with
\begin{equation*}
V(v,\lambda)=e^{-ivD(\lambda)}F(iv,-iv,\lambda)=
e^{-ivD(\lambda)}L(v,\lambda).
\end{equation*}
As in the case of Delaunay surfaces in $\R^3$ or in the case of primitive harmonic maps considered in \cite{Bu-Ki} we would like to consider
the differential form $\xi =\hat{ D}(\lambda) \dd z$ as a holomorphic potential for some equivariant primitive harmonic map into $G/K$. For this it suffices to show
$ V \in \Lambda^+G^{\mathbb{C}}_{\sigma}$. It will turn out to be very easy to verify this condition.
As a preliminary step we note the following equations, which follow directly from \eqref{Iwadec}:
\begin{equation}\label{prep1}
F^{-1} \partial_{\bar{z}} F= V^{-1} \partial_{\bar{z}} V = V^{-1}\frac{i}{2} \partial_{v}V,
\end{equation}
\begin{equation}\label{prep2}
V^{-1} \dd V =( -i L^{-1} DL  +
L ^{-1}\partial_v L) \dd v =(-i \mathbb{A}_{\lambda} + \mathbb{B}_{\lambda} )\dd v=
-2i \alpha'' \dd v.
\end{equation}

After these preparations we can prove

\begin{proposition} Using the notation above the following statements are equivalent
\begin{enumerate}
\item
The decomposition (\ref{Iwadec}) is an Iwasawa decomposition
for every $z \in \St$,

\item $V(v,\lambda) \in  \Lambda^+G^{\mathbb{C}}_{\sigma}$  for all $z=u+iv \in \St$.

\item $V(v_0,\lambda) \in  \Lambda^+G^{\mathbb{C}}_{\sigma}$
 for some $z_0 =u_0+iv_0\in \St$.

\item $V(0,\lambda)  \in K$ is indpendent of $\lambda$.

\item $F(0,0,\lambda) \in K$ is independent of $\lambda$.
\end{enumerate}
\end{proposition}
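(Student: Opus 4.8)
\emph{Plan.} I would link the five conditions in the order $(1)\Leftrightarrow(2)$, $(2)\Leftrightarrow(3)$, $(3)\Leftrightarrow(4)$, $(4)\Leftrightarrow(5)$, so that everything reduces to understanding where the factor $V(v,\lambda)$ in \eqref{Iwadec} takes its values. The engine of the whole argument is the first order ODE in the $v$--direction satisfied by $V$, together with two standard structural facts about the twisted loop groups: that $\Lambda^{+}\mathfrak{g}^{\mathbb{C}}_{\sigma}$ (the non--negative Fourier modes) is the Lie algebra of $\Lambda^{+} G^{\mathbb{C}}_{\sigma}$, and that $\Lambda G_{\sigma}\cap\Lambda^{+} G^{\mathbb{C}}_{\sigma}=K$, realised as constant loops. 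Throughout I would use that $V(v,\lambda)=e^{-ivD(\lambda)}L(v,\lambda)$ depends on $v$ alone, so that the quantifiers ``for all $z$'' and ``for some $z_{0}$'' in $(2)$ and $(3)$ are really quantifiers over $v$.

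The central step is $(3)\Rightarrow(2)$. From \eqref{prep2} one reads off
\[
V^{-1}\partial_{v}V=-2i\,\alpha''_{\lambda}=-2i\bigl(\alpha''_{\mathfrak{k}}+\lambda\,\alpha''_{\mathfrak{m}}\bigr),
\]
which involves only the powers $\lambda^{0}$ and $\lambda^{1}$ and hence takes values in $\Lambda^{+}\mathfrak{g}^{\mathbb{C}}_{\sigma}$. This is exactly the manifestation, at the level of the equivariant frame, of the constancy of the potential $D(\lambda)\,\dd z$. Since the coefficient of this linear ODE lies in the Lie algebra $\Lambda^{+}\mathfrak{g}^{\mathbb{C}}_{\sigma}$ of the subgroup $\Lambda^{+} G^{\mathbb{C}}_{\sigma}$, its solutions preserve right cosets of $\Lambda^{+} G^{\mathbb{C}}_{\sigma}$: writing $V(v,\lambda)=V(v_{0},\lambda)\,W(v,\lambda)$ with $W(v_{0},\lambda)=\mathbf{e}$, the factor $W$ solves $W^{-1}\partial_{v}W=-2i(\alpha''_{\mathfrak{k}}+\lambda\alpha''_{\mathfrak{m}})$ and therefore stays in $\Lambda^{+} G^{\mathbb{C}}_{\sigma}$ for all $v$. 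Consequently $V(v_{0},\lambda)\in\Lambda^{+} G^{\mathbb{C}}_{\sigma}$ for one $v_{0}$ forces $V(v,\lambda)\in\Lambda^{+} G^{\mathbb{C}}_{\sigma}$ for every $v$, which is $(2)$; the reverse implication $(2)\Rightarrow(3)$ is trivial.

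It then remains to close the loop with the pointwise conditions. Evaluating the two descriptions of the frame at the base point gives $V(0,\lambda)=L(0,\lambda)=F(0,0,\lambda)$, which identifies $(4)$ and $(5)$ verbatim. For $(3)\Rightarrow(4)$ I would use $(3)\Rightarrow(2)$ to get $V(0,\lambda)\in\Lambda^{+} G^{\mathbb{C}}_{\sigma}$; since $F(0,0,\lambda)=V(0,\lambda)$ also lies in the real loop group $\Lambda G_{\sigma}$, the intersection fact $\Lambda G_{\sigma}\cap\Lambda^{+} G^{\mathbb{C}}_{\sigma}=K$ forces it to be a constant loop in $K$, which is $(4)$; conversely $K\subset\Lambda^{+} G^{\mathbb{C}}_{\sigma}$ (constant, hence holomorphic, loops, fixed by $\sigma$) gives $(4)\Rightarrow(3)$ by taking $v_{0}=0$. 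Finally $(1)\Leftrightarrow(2)$ is essentially a matter of definition: rewriting \eqref{Iwadec} as $e^{zD(\lambda)}=F(z,\bar z,\lambda)\,V(v,\lambda)^{-1}$ and recalling that $F\in\Lambda G_{\sigma}$ always while $\Lambda^{+} G^{\mathbb{C}}_{\sigma}$ is a group, the product exhibits an Iwasawa decomposition of the holomorphic loop $e^{zD(\lambda)}$ precisely when $V(v,\lambda)^{-1}$, equivalently $V(v,\lambda)$, lies in $\Lambda^{+} G^{\mathbb{C}}_{\sigma}$.

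The computation $V^{-1}\partial_{v}V=-2i(\alpha''_{\mathfrak{k}}+\lambda\alpha''_{\mathfrak{m}})$ and the coset argument are harmless; the points that genuinely need care are the two structural facts I am leaning on. The claim that the $v$--flow stays inside $\Lambda^{+} G^{\mathbb{C}}_{\sigma}$ should be justified at the level of Banach--Lie loop groups, so that ``coefficient in the subalgebra $\Rightarrow$ flow in the subgroup'' is legitimate, and the identity $\Lambda G_{\sigma}\cap\Lambda^{+} G^{\mathbb{C}}_{\sigma}=K$ must be invoked in our possibly \emph{indefinite} setting, where $G=SO^{+}(1,n+3)$ is non--compact; this is exactly the place where the loop group conventions of \cite{DPW,Bu-Ki} have to be in force. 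I expect this last identification to be the main obstacle, with the rest being formal.
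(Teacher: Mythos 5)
Your proposal is correct and follows essentially the same route as the paper: the equivalence $(1)\Leftrightarrow(2)$ is definitional, $(3)\Rightarrow(2)$ is the Burstall--Kilian ODE trick using that $V^{-1}\partial_v V=-2i\alpha''_\lambda$ contains no negative powers of $\lambda$, and the pointwise conditions $(4)$, $(5)$ are handled by evaluating $F(z,\bar z,\lambda)=e^{zD(\lambda)}V(v,\lambda)$ at $z=0$ together with $K\subset\Lambda^+G^{\mathbb{C}}_{\sigma}$. Your version merely makes explicit the intersection identity $\Lambda G_{\sigma}\cap\Lambda^+G^{\mathbb{C}}_{\sigma}=K$ that the paper leaves implicit in the step from $F(0,0,\lambda)=V(0,\lambda)$ to $(4)$.
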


\begin{proof} The statements $(1)$ and $(2)$ are obviously equivalent.
Also $(2)$ {clearly implies  $(3)$.}

To prove the equivalence of $(2)$ and $(3)$ we use a trick (\cite{Bu-Ki}):
in view of  (\ref{prep2}) it  suffices to show that $V $ is contained in
$\Lambda^+\mathfrak{g}^{\mathbb{C}}_{\sigma}$ for some point $z_0 \in \St$, since it solves an ordinary differential equation which does not contain any negative power of $\lambda$.
But this is the assumption of $(3)$.

In view of \eqref{Iwadec} the statements $(4)$ and $(5)$ are equivalent.

Starting from $(3)$ we consider again the relation
\begin{equation}\label{eq-345}
\hbox{$F(z,\bar{z},\lambda)= \exp(z D(\lambda)) V(v,\lambda)$ with
 $V(v,\lambda) \in  \Lambda^+G^{\mathbb{C}}_{\sigma}$.}
 \end{equation} Setting $z=0$ we obtain $F(0,0,\lambda)  = V(0,\lambda)$, whence
 $(4)$ as well as $(5)$ holds. Conversely, if $(4)$ or $(5)$ are satisfied, then the relation in \eqref{eq-345} (which is always true in the equivariant case) implies  that in either case $V(0,\lambda)  \in K \subset \Lambda^+G^{\mathbb{C}}_{\sigma}$, from which $(3)$ follows.
\end{proof}

%%%%%%%%%%%%%%%%%%%%%%%%%%%%
\subsection{Relating $D(\lambda)$ to the Maurer-Cartan form of the extended frame}

Since we want   (\ref{Iwadec})  to be an Iwasawa decomposition, by the proposition we need to assume ${F}(0) \in K$ independent of $\lambda$.
Actually, from here on we will always assume ${F}(0) =I$ unless the opposite will be stated explicitly.

\begin{theorem} \label{thm-mcform} We retain the notation as above.
\begin{enumerate}
\item  Let $\St$ be a strip as above and let $F$ denote the extended frame of some TE
harmonic map $f: \St \rightarrow G/K$, satisfying
\begin{equation} \label{equi-F-loop-1}
\alpha_{\lambda}=F(z,\bar{z},\lambda)^{-1}\dd F(z,\bar{z},\lambda), ~~F(z+t,\bar{z}+t,\lambda)= e^{t D(\lambda)} F(z,\bar{z},\lambda).
\end{equation}
Then
\begin{equation} \label{eq-defD2}
D(\lambda) =\alpha_{\lambda}|_{z=0}=\alpha'_{\lambda} (0) + \alpha''_{\lambda} (0) =\lambda^{-1} \alpha_{\mathfrak{m}}'(0)+ \alpha_{\mathfrak{k}}'(0)+\alpha_{\mathfrak{k}}''(0) + \lambda \alpha_{\mathfrak{m}}''(0)\in\Lambda\mathfrak{g}_{\sigma}.
\end{equation}
Moreover, set
\begin{equation}  \label{eq-defV}
V(v,\lambda) = e^{-iv\cdot D(\lambda) } \cdot F(iv,-iv,\lambda),
\end{equation}
where $v=-\frac{i}{2}(z+\bar{z})$. Then $V(v,\lambda)$ takes values in $\Lambda^+ G^{\C}_{\sigma}$ and satisfies
\begin{equation}\label{eq-mc1} V'=V_v=-2i V\alpha_{\lambda}'',\ ~ V(0,\lambda)=I, ~~\hbox{ and }~\alpha_{\lambda}'+\alpha_{\lambda}''=V^{-1} D(\lambda) V.
\end{equation}
Note that this also  shows that  $F(z,\bar{z},\lambda)=e^{z D(\lambda) }V(y,\lambda)$ provides an  Iwasawa decomposition  of $e^{zD(\lambda) }$.
\item Conversely, let $D(\lambda)\in\Lambda\mathfrak{g}_{\sigma}$ such that $D(\lambda)=\lambda^{-1} D_{-1} + D_0 + \lambda D_1$, with $D_0\in\mathfrak{k}$, $D_{-1},D_{1}\in\mathfrak{m}^{\C}$. Let $V(v,\lambda):(-\varepsilon,\varepsilon)\rightarrow \Lambda^+ G^{\C}_{\sigma}$ be a map such that
         \begin{equation}\label{eq-Iwadec1}
         \left\{\begin{split}
         & ~V_v=-2i V\widetilde{\alpha}_{\lambda}'' , ~ V(0,\lambda)=I,\\
         & ~V^{-1} D(\lambda) V =\overline{\widetilde{\alpha}_{\lambda}''}+\widetilde{\alpha}_{\lambda}''.
         \end{split}     \right.
       \end{equation}
Then
\begin{equation}\label{eq-Iwadec3}
\widetilde{F}(z,\bar{z},\lambda) = e^{z D(\lambda)} \cdot V(v,\lambda)
\end{equation}
is  an Iwasawa decomposition
 of $e^{z D(\lambda) }$ and hence,
$\tilde{f}(z,\bar{z},\lambda)=\widetilde{F}(z,\bar{z},\lambda)\mod K$,
is a TE harmonic map into $G/K$. Moreover, we have
\begin{equation}\label{eq-Iwadec4}
\widetilde{F}(z,\bar{z},\lambda)^{-1} \dd\widetilde{F}(z,\bar{z},\lambda)=\widetilde{\alpha}_{\lambda}'\dd z+\widetilde{\alpha}_{\lambda}''\dd\bar{z},
\hbox{ with } \widetilde{\alpha}_{\lambda}'=\overline{\widetilde{\alpha}_{\lambda}''}=V^{-1} D(\lambda) V-\frac{i}{2}V^{-1}V_v.
\end{equation}

\end{enumerate}
\end{theorem}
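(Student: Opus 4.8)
The plan is to treat the two parts separately, but to notice that the computational heart of both is already contained in the preparatory equations \eqref{prep1}, \eqref{prep2} and in the decomposition \eqref{Iwadec}, so most of the work is bookkeeping rather than new analysis. For part (1), I would start from the quasi-invariance relation \eqref{equi-F-loop-1}, namely $F(z+t,\bar z+t,\lambda)=e^{tD(\lambda)}F(z,\bar z,\lambda)$. Differentiating this in $t$ at $t=0$ and using $F(0,0,\lambda)=I$ gives $D(\lambda)=\partial_u F(0,0,\lambda)=\alpha_\lambda(\partial_u)|_{z=0}$. Since $\partial_u=\partial_z+\partial_{\bar z}$, this is exactly $\alpha'_\lambda(0)+\alpha''_\lambda(0)$, and writing out the twisted decomposition \eqref{form1} produces the explicit formula \eqref{eq-defD2}. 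The membership $D(\lambda)\in\Lambda\mathfrak g_\sigma$ is automatic because $\alpha_\lambda$ is the Maurer--Cartan form of a frame in $\Lambda G_\sigma$.

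Next, for the statement about $V(v,\lambda)=e^{-ivD(\lambda)}F(iv,-iv,\lambda)$, I would invoke the Proposition above: since we have assumed $F(0,0,\lambda)=I\in K$ independent of $\lambda$, condition (5) of the Proposition holds, hence condition (2) holds, i.e. $V(v,\lambda)\in\Lambda^+G^{\mathbb C}_\sigma$ for all $v$. The differential equation $V_v=-2iV\alpha''_\lambda$ is precisely \eqref{prep2} once one records that $v=-\tfrac{i}{2}(z+\bar z)$ forces $\dd v$-differentiation along the imaginary axis; and $V(0,\lambda)=I$ follows from $F(0,0,\lambda)=I$. The relation $\alpha'_\lambda+\alpha''_\lambda=V^{-1}D(\lambda)V$ I would obtain by differentiating \eqref{eq-defV} rewritten as $F(z,\bar z,\lambda)=e^{zD(\lambda)}V(v,\lambda)$ (the form \eqref{Iwadec}) and comparing Maurer--Cartan forms: $\alpha_\lambda=V^{-1}D(\lambda)V\,\dd(\text{holomorphic part})+V^{-1}\dd V$, and matching the $\dd z$ and $\dd\bar z$ components against \eqref{prep2} isolates the asserted identity. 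That $F=e^{zD(\lambda)}V(v,\lambda)$ is then an Iwasawa decomposition is immediate from $e^{zD(\lambda)}\in\Lambda G_\sigma$ (real) and $V\in\Lambda^+G^{\mathbb C}_\sigma$.

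For the converse, part (2), the logic runs in the opposite direction and the main point to verify is consistency. Given $D(\lambda)=\lambda^{-1}D_{-1}+D_0+\lambda D_1$ of the stated type and a solution $V$ of the coupled system \eqref{eq-Iwadec1}, I would define $\widetilde F=e^{zD(\lambda)}V(v,\lambda)$ and simply check that its Maurer--Cartan form has the primitive harmonic shape \eqref{prim}. Concretely, a direct computation gives $\widetilde F^{-1}\dd\widetilde F=V^{-1}D(\lambda)V\,\dd u+(V^{-1}D(\lambda)V+V^{-1}V_v)\,\dd v$ after converting $\dd z$ to $\dd u,\dd v$; substituting the two relations in \eqref{eq-Iwadec1} and re-expressing in terms of $\dd z,\dd\bar z$ yields \eqref{eq-Iwadec4}, with $\widetilde\alpha'_\lambda=\overline{\widetilde\alpha''_\lambda}=V^{-1}D(\lambda)V-\tfrac{i}{2}V^{-1}V_v$. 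Because the $\lambda$-powers occurring in $V^{-1}D(\lambda)V$ are controlled by $V\in\Lambda^+$ and the grading of $D(\lambda)$, the resulting $\alpha_\lambda$ integrable for all $\lambda$, so $\tilde f=\widetilde F\bmod K$ is TE primitive harmonic by Theorem above; translational equivariance is built in by construction through the factor $e^{zD(\lambda)}$.

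The step I expect to be the main obstacle is the bookkeeping in the converse: one must confirm that the two equations in \eqref{eq-Iwadec1} are mutually compatible (so that a solution $V$ genuinely exists and stays in $\Lambda^+G^{\mathbb C}_\sigma$) and that the $\lambda$-expansion of $\widetilde\alpha''_\lambda$ extracted from $V^{-1}D(\lambda)V$ lands in the correct eigenspaces $\mathfrak g_j$ with the correct powers of $\lambda$, i.e. that \eqref{eq-Iwadec3} really is an Iwasawa splitting and not merely a factorization. I would handle this by reading off the $\lambda^{-1}$-coefficient of $V^{-1}D(\lambda)V$, which since $V\in\Lambda^+$ must equal $\mathrm{Ad}(V(0,\lambda))^{-1}$ applied to the lowest term, and tracking that it lies in $\lambda^{-1}\mathfrak m^{\mathbb C}$; everything else then follows from the reality of $e^{zD(\lambda)}$ and the twisting condition.
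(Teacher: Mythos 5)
Your proposal is correct and follows essentially the same route as the paper: both parts rest on computing the Maurer--Cartan form of $F=e^{zD(\lambda)}V(v,\lambda)$, reading off \eqref{eq-defD2} and \eqref{eq-mc1} by comparing coefficients, deducing $V\in\Lambda^+G^{\mathbb{C}}_\sigma$ from the fact that the ODE $V_v=-2iV\alpha''_\lambda$ involves no negative $\lambda$-powers (you route this through the earlier Proposition, the paper argues it directly, but it is the same ODE argument), and, in the converse, checking the $\lambda$-grading of $V^{-1}D(\lambda)V$ together with the reality of $\widetilde{\alpha}'_\lambda+\widetilde{\alpha}''_\lambda$. The only blemishes are two harmless slips: your intermediate expression should read $V^{-1}D(\lambda)V\,\dd u+\left(iV^{-1}D(\lambda)V+V^{-1}V_v\right)\dd v$, and the $\lambda^{-1}$-coefficient of $V^{-1}D(\lambda)V$ is controlled by the lowest $\lambda$-coefficient of $V(v,\cdot)$ rather than by $V(0,\lambda)=I$; neither affects your final formulas, which agree with \eqref{eq-Iwadec4}.
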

\begin{proof}

(1). By \eqref{eq-defV}, we have that $V(0,\lambda)=I$ and  $F(z,\bar{z},\lambda)=e^{z D(\lambda) }V(v,\lambda)$. So
\[\alpha_{\lambda}=F(z,\bar{z},\lambda)^{-1}\dd F(z,\bar{z},\lambda)=V^{-1}D(\lambda) V(v,\lambda)\dd z+V^{-1}V_v\dd v.\]
As a consequence,  \eqref{eq-defD2} and \eqref{eq-mc1} follow from this equation. Since $\alpha_{\lambda}''\in\Lambda^+\mathfrak{g}^{\C}$ and $V(0,\lambda)=I$, we see by \eqref{eq-mc1}
that  $V(v,\lambda)$ takes values in $\Lambda^+ G^{\C}_{\sigma}$, which means that $F(z,\bar{z},\lambda)=e^{z D(\lambda) }V(v,\lambda)$ provides an Iwasawa decomposition  of $e^{zD(\lambda) }$.

(2). From $V^{-1} D(\lambda) V =\overline{\widetilde{\alpha}_{\lambda}''}+\widetilde{\alpha}_{\lambda}''$, we see that $\lambda^{-1}\widetilde{\alpha}_{\lambda}''$ takes values in $\Lambda ^-  \mathfrak{g}^{\C}$ since the only negative power of $\lambda$ in the right side is $\lambda^{-1}$.
Moreover, from \eqref{eq-Iwadec4} and \eqref{eq-Iwadec3}, we have
\[\widetilde{F}(z,\bar{z},\lambda)^{-1} \dd\widetilde{F}(z,\bar{z},\lambda)=V^{-1} D(\lambda) V\dd z+V^{-1}V_v=\widetilde{\alpha}_{\lambda}'\dd z+\widetilde{\alpha}_{\lambda}''\dd\bar{z},~~ \widetilde{F}(0,0,\lambda)=I.\]
Since $\overline{\widetilde{\alpha}_{\lambda}'+\widetilde{\alpha}_{\lambda}''}=\widetilde{\alpha}_{\lambda}'+\widetilde{\alpha}_{\lambda}''$, we see that $\widetilde{F}(z,\bar{z},\lambda)$ takes values in $\Lambda G_{\sigma}$ and that $\tilde{f}(z,\bar{z},\lambda)=\widetilde{F}(z,\bar{z},\lambda)\mod K$ is a TE harmonic map.
\end{proof}

\begin{remark}
\
 \begin{enumerate}
\item The potential $D(\lambda)$ in \eqref{eq-defD2} is called a Delaunay potential.
\item From \eqref{eq-defD2} it is clear that $D_0$ can be simplified to the same extent as $\alpha_{\mathfrak{k}}(0)=\alpha_{\mathfrak{k}}'(0)+\alpha_{\mathfrak{k}}''(0)$ can be.
For  several other surface classes, like CMC surfaces in $\R^3$ or minimal Lagrangian surfaces in $\C P^2$ one can  remove $D_0$ completely.
In the case of Willmore surfaces this is not possible by the construction of the  potential $D(\lambda)$ (see e.g. \eqref{eq-TE-Willmore-S3})
\item We have seen above that to find a global Iwasawa splitting $\widetilde{F}(z,\bar{z},\lambda) = e^{z D(\lambda)} \cdot V(v,\lambda)$, we need to find $V \in \Lambda^+G^{\mathbb{C}}_{\sigma}$
such that the equations \eqref{eq-Iwadec1} are satisfied globally.
\end{enumerate}
\end{remark}

%%%%%%%%%%%%%%%%%%%%%%%
%%%%%%%%%%%%%%%%%%%%
\subsection{About the domain of definition of TE primitive harmonic maps}
%%%%%%%%%%%%%%%%%%%%%%%%

While the potential of a TE primitive harmonic map is obviously defined on $\C$, the actual surface is generally only an immersion on some (small) strip. This is a consequence of the fact that the Iwasawa decomposition for non-compact real groups $G$ is not global.

\begin{theorem}
	Let $\xi = D(\lambda) dz$ be a Delaunay potential.
	Then
	
	$(1)$  $C(z,\lambda) = e^{zD(\lambda)}$
	is a holomorphic extended frame and the extended frame $F(z, \bar{z}, \lambda)$ is defined on some open strip $\St=\{z\in\C| Re(z)\in(u_1,u_2)\}$ with $-\infty\leq u_1<0<u_2\leq+\infty$. Moreover,  the corresponding harmonic map $f(z, \bar{z}, \lambda)_{\lambda=1}=F(z, \bar{z}, \lambda)_{\lambda=1}\mod K$ is TE on  $\St$.
	
	$(2)$ If $C(z,\lambda = 1 ) $ is periodic with some real period $p$, then the harmonic map $f$  descends to a RE harmonic map on the annulus obtained as
	image of $\St$  by taking the quotient $\C\mod p\mathbb{Z}$.
\end{theorem}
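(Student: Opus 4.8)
The plan is to obtain part (1) directly from the loop-group formalism of Theorem~\ref{thm-mcform} together with the (non-global) Iwasawa splitting, and to obtain part (2) by a change of the underlying Riemann-surface coordinate. For part (1), I would first note that $C(z,\lambda)=e^{zD(\lambda)}$ solves $C^{-1}\dd C=D(\lambda)\,\dd z=\xi$ with $C(0,\lambda)=\mathbf e$ and depends holomorphically on $z$; since $D(\lambda)=\lambda^{-1}D_{-1}+D_0+\lambda D_1\in\Lambda\mathfrak g_\sigma$ with $D_0\in\mathfrak k$, $D_{\pm1}\in\mathfrak m^{\C}$, this is exactly a holomorphic extended frame in the sense of Section~4. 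I would then perform the Iwasawa splitting $C(z,\lambda)=F(z,\bar z,\lambda)F_+(z,\bar z,\lambda)$ with $F\in\Lambda G_\sigma$, $F_+\in\Lambda^+G^{\C}_\sigma$, normalized by $F(0,0,\lambda)=\mathbf e$. As $G$ is non-compact this is valid only on the open ``big cell'' $\mathcal B=\Lambda G_\sigma\cdot\Lambda^+G^{\C}_\sigma$, so the natural domain is $\Omega=\{z:\ e^{zD(\lambda)}\in\mathcal B\}$.

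The crucial point is that $\Omega$ is a strip. The reality condition built into $D(\lambda)\in\Lambda\mathfrak g_\sigma$ (in particular $D(\lambda)\in\mathfrak g$ for every $\lambda\in S^1$) gives $e^{tD(\lambda)}\in\Lambda G_\sigma$ for every real $t$. Hence $C(z+t,\lambda)=e^{tD(\lambda)}C(z,\lambda)$ differs from $C(z,\lambda)$ only by left multiplication by an element of $\Lambda G_\sigma$, and $\mathcal B$ is invariant under such left multiplication; therefore $z\in\Omega\iff z+t\in\Omega$ for all $t\in\R$. Thus $\Omega$ is invariant under the one-parameter group of equivariant translations, so it is a union of lines parallel to that direction; moreover the whole axis of that direction lies in $\Omega$, since there $e^{zD(\lambda)}\in\Lambda G_\sigma$ already provides a trivial Iwasawa splitting. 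Taking the connected component of that axis in the open set $\Omega$ yields the maximal strip $\St$ with the stated bounds, and the translational equivariance of $f$ on $\St$ is then immediate from Theorem~\ref{thm-mcform}: $F(z+t,\cdot,\lambda)=e^{tD(\lambda)}F(z,\cdot,\lambda)$, so $f(\cdot,\cdot,1)=F(\cdot,\cdot,1)\bmod K$ is TE.

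For part (2), the assumption that $C(z,1)=e^{zD(1)}$ is periodic with real period $p$ is equivalent to $e^{pD(1)}=\mathbf e$. Inserting this into the equivariance relation at $\lambda=1$ gives $F(z+p,\cdot,1)=e^{pD(1)}F(z,\cdot,1)=F(z,\cdot,1)$, so $F(\cdot,\cdot,1)$, and hence $f(\cdot,\cdot,1)$, is invariant under $z\mapsto z+p$ and descends to the quotient $\St/p\Z\subset\C/p\Z$. I would then introduce the biholomorphism $w=\exp(2\pi i z/p)$, under which $\C/p\Z\cong\C^*$, the strip $\St$ becomes an annulus not containing $0$, and the equivariant translations $z\mapsto z+t$ turn into the rotations $w\mapsto e^{2\pi i t/p}w$ about $w=0$. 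Because the harmonic-map equation into $G/K$ is conformally invariant, the descended map is harmonic in the coordinate $w$, and its equivariance under these rotations is precisely the RE property; this identifies $f$ as a RE harmonic map on the annulus.

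I expect the main obstacle to be making the strip statement in part (1) rigorous: one must combine openness of the Iwasawa big cell with the real-translation invariance and a connectedness argument to conclude that the domain is genuinely one interval's worth of parallel lines (the component of the equivariance axis) rather than a more complicated invariant open set. The remaining steps — that $C$ is a holomorphic extended frame, the TE and periodicity relations, and the conformal invariance of harmonicity — are formal consequences of Theorem~\ref{thm-mcform} and of standard facts.
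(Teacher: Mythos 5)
Your proof is correct and follows exactly the route the paper intends: the paper states this theorem without any proof, relying on the preceding loop-group discussion, and your argument (openness of the Iwasawa big cell, its invariance under left multiplication by $e^{tD(\lambda)}\in\Lambda G_\sigma$ for real $t$, the real axis lying in the cell, and the connected-component argument) supplies precisely the details that are left implicit. One remark: your strip correctly comes out as a union of lines parallel to the real axis, i.e.\ $\mathrm{Im}(z)\in(v_1,v_2)$ with $v_1<0<v_2$, which is consistent with \eqref{eq-strip} and with the TE singular lines being horizontal, so the condition ``$Re(z)\in(u_1,u_2)$'' in the statement should be read as a typo for the imaginary part.
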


Since the Iwasawa decomposition for  $\Lambda G^{\C}_{\sigma}$ is  global when $G$ is compact, we have
\begin{theorem}
	Assume that $G$ is compact. Then
	\begin{enumerate}
		\item Every TE harmonic map into any $k-$symmetric space $G/K$ is defined and smooth on all of $\C$.
		
		\item Conversely, let $\xi = D(\lambda) dz$ be a Delaunay potential.
		Then $C(z,\lambda) = e^{zD(\lambda)}$
		is a holomorphic extended frame and the extended frame $F(z, \bar{z}, \lambda)$ is defined on $\C$ and the corresponding harmonic map  $f(z, \bar{z}, \lambda=1)$ is TE on  $\C$.
		Moreover, if $C(z,\lambda = 1 ) $ is periodic with some real period, then the harmonic map  $f(z, \bar{z}, \lambda=1)$  descends to a RE harmonic map on $\C^*$  by taking the quotient $ \C \mod p\mathbb{Z}$.
	\end{enumerate}
\end{theorem}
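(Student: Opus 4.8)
The plan is to let the global Iwasawa decomposition for compact $G$ do all the work, reducing both statements to the single observation that the holomorphic frame $C(z,\lambda)=e^{zD(\lambda)}$ is \emph{entire} in $z$. For part $(1)$ I would start from a given TE harmonic map $f$ with extended frame $F$ normalized by $F(0)=I$. By Theorem \ref{thm-mcform}$(1)$ the frame carries a Delaunay potential $D(\lambda)$ and, on the strip where $f$ is a priori defined, $F(z,\bar z,\lambda)=e^{zD(\lambda)}V(v,\lambda)$ is an Iwasawa decomposition with $V\in\Lambda^+G^{\C}_\sigma$ and $v=\mathrm{Im}\,z$. Now $C(z,\lambda):=e^{zD(\lambda)}$ is the matrix exponential of a fixed element of $\Lambda\mathfrak{g}_\sigma$ scaled by $z$, hence holomorphic on all of $\C$ and valued in $\Lambda G^{\C}_\sigma$. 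Since $G$ is compact, the splitting $\Lambda G^{\C}_\sigma=\Lambda G_\sigma\cdot\Lambda^+G^{\C}_\sigma$ is a global, real-analytic diffeomorphism, so I would apply it to $C(z,\lambda)$ at \emph{every} $z\in\C$ and define $F(z,\bar z,\lambda)$ to be the $\Lambda G_\sigma$-factor. This $F$ is then defined and smooth on all of $\C$; by uniqueness of the splitting it agrees with the original frame on the original strip, so $f=F\mod K$ extends smoothly to all of $\C$, and the relation $F(z+t,\overline{z+t},\lambda)=e^{tD(\lambda)}F(z,\bar z,\lambda)$ persists globally, keeping $f$ TE.

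Part $(2)$ runs the same machine in reverse. Given a Delaunay potential $\xi=D(\lambda)\,\dd z$, the function $C(z,\lambda)=e^{zD(\lambda)}$ solves $C^{-1}\dd C=\xi$, $C(0,\lambda)=I$, and is again entire in $z$, hence a holomorphic extended frame on $\C$. The global Iwasawa decomposition gives $C=F\hat F_+$ with $F\in\Lambda G_\sigma$ defined on all of $\C$, and by Theorem \ref{thm-mcform}$(2)$ the map $f(\cdot,\cdot,\lambda=1)=F(\cdot,\cdot,1)\mod K$ is a TE harmonic map on $\C$. For the descent I would compute the monodromy: since $p$ is real, $v=\mathrm{Im}\,z$ is unchanged under $z\mapsto z+p$, so $C(z+p,\lambda)=e^{pD(\lambda)}C(z,\lambda)$ and, by uniqueness of the Iwasawa factors, $F(z+p,\overline{z+p},\lambda)=e^{pD(\lambda)}F(z,\bar z,\lambda)$. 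The hypothesis that $C(z,\lambda=1)$ is $p$-periodic is exactly $e^{pD(1)}=I$, which makes the monodromy at $\lambda=1$ trivial; hence $F(z+p,\overline{z+p},1)=F(z,\bar z,1)$ and $f(z+p,\cdot,1)=f(z,\cdot,1)$. Thus $f(\cdot,1)$ is invariant under the real translation by $p$ and descends to $\C/p\Z\cong\C^*$, under which (via $z\mapsto e^{2\pi i z/p}$) the residual real translations become rotations about $0$, so the descended map is RE.

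The main obstacle is not in any single calculation but in correctly invoking the global Iwasawa decomposition: one must know that for compact $G$ the splitting $\Lambda G^{\C}_\sigma=\Lambda G_\sigma\cdot\Lambda^+G^{\C}_\sigma$ holds for \emph{all} loops and depends real-analytically on its argument, so that feeding in the entire family $z\mapsto e^{zD(\lambda)}$ produces a frame that is smooth on all of $\C$ and matches the original frame by uniqueness. This is precisely the fact recalled immediately before the theorem; once it is in hand, the entireness of $e^{zD(\lambda)}$ removes the strip restriction of the preceding non-compact theorem, and the remaining points — the monodromy identity and the translation-to-rotation identification on the quotient cylinder — are routine.
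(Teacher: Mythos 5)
Your proposal is correct and is essentially the paper's own argument: the paper gives no separate proof of this theorem, deriving it immediately from the preceding (strip) theorem together with the remark that the Iwasawa splitting $\Lambda G^{\C}_{\sigma}=\Lambda G_{\sigma}\cdot\Lambda^{+}G^{\C}_{\sigma}$ is global and real-analytic when $G$ is compact, which is exactly the mechanism you invoke. Your monodromy computation for the descent (with the harmless $K$-ambiguity in the Iwasawa factors disappearing after projecting mod $K$) and the identification of residual translations with rotations on $\C/p\Z\cong\C^{*}$ are the intended routine steps.
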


%%%%%%%%%%%%%%%%%%%%%%%

%%%%%%%%%%%%%%%%%%%%%%%%%%%
\section{TE Willmore surfaces}
In this section, we will present as examples and applications the holomorphic potentials of  TE Willmore surfaces and discuss the domain where the corresponding TE Willmore surface is immersed.
  We refer to \cite{DoWa10} for the detailed discussion of Willmore surfaces via loop groups.

\subsection{Holomorphic potentials of TE Willmore surfaces}
Here we first recall that a Willmore surface $y$ always has (in a one-one correspondence ) an oriented conformal Gauss map $Gr$ into
$Gr_{1,3}\R^{n+4}_{1}=SO^+(1,n+3)/SO^+(1,3)\times SO(n)$.
We can choose a frame
\begin{equation}\label{eq-F}
F=\left(\frac{1}{\sqrt{2}}(Y+N),\frac{1}{\sqrt{2}}(-Y+N),e_1,e_2,\psi_1,\cdots,\psi_n\right)
\end{equation}
of $Gr$ which attains values in $SO^+(1,n+3)$
where $Y$ is a ``canonical lift'' of $y$ w.r.t to some complex coordinate $z$, and
$\{\psi_j,j=1,\cdots,n\}$ is a basis of the conformal normal bundle of $Y$.

The Maurer-Cartan form $\alpha=F^{-1}\dd F$ of $F$ is of the form \[\alpha=\left(
                   \begin{array}{cc}
                     A_1 & B_1 \\
                     B_2 & A_2 \\
                   \end{array}
                 \right)\dd z+\left(
                   \begin{array}{cc}
                     \bar{A}_1 & \bar{B}_1 \\
                     \bar{B}_2 & \bar{A}_2 \\
                   \end{array}
                 \right)\dd \bar{z},\]
with
\begin{equation}A_1=\left(
                             \begin{array}{cccc}
                               0 & 0 & s_1 & s_2\\
                               0 & 0 & s_3 & s_4 \\
                               s_1 & -s_3 & 0 & 0 \\
                               s_2 & -s_4 & 0 & 0 \\
                             \end{array}
                           \right),\   A_2=\left(
                             \begin{array}{cccc}
                               b_{11} & \cdots &  b_{n1} \\
                               \vdots& \vdots & \vdots \\
                               b_{1n} &\cdots & b_{nn} \\
                             \end{array}
                           \right),\ \end{equation}
\begin{equation}\label{s}
\left\{\begin{split}&s_1=\frac{1}{2\sqrt{2}}(1-s-2k^2),\ s_2=-\frac{i}{2\sqrt{2}}(1+s-2k^2),\\
&s_3=\frac{1}{2\sqrt{2}}(1+s+2k^2),\ s_4=-\frac{i}{2\sqrt{2}}(1-s+2k^2),\\
\end{split}\right.
\end{equation}
\begin{equation} \label{B1}
B_1=\left(
      \begin{array}{ccc}
         \sqrt{2} \beta_1 & \cdots & \sqrt{2}\beta_n \\
         -\sqrt{2} \beta_1 & \cdots & -\sqrt{2}\beta_n \\
        -k_1 & \cdots & -k_n \\
        -ik_1 & \cdots & -ik_n \\
      \end{array}
    \right),  \ \
B_2=\left(
      \begin{array}{cccc}
        \sqrt{2} \beta_1& \sqrt{2} \beta_1 & k_1& i k_1 \\
        \vdots & \vdots & \vdots & \vdots\\
      \sqrt{2} \beta_n& \sqrt{2} \beta_n & k_n & ik_n \\
      \end{array}
    \right)=-B_1^tI_{1,3}.
 \end{equation}
Here (see \cite{BPP,DoWa12} for more details)
\[Y_{zz}=-\frac{s}{2}Y+\kappa.\]
Moreover, choosing an oriented orthonormal frame $\{\psi_j,j=1,\cdots,n\}$ of the normal bundle $V^{\perp}$ over $U$,
we can write the normal connection as $D_z\psi_j=\sum_{l=1}^{n}b_{jl}\psi_l$ with $b_{jl}+b_{lj}=0.$
Then, the conformal Hopf differential $\kappa$ and its derivative $D_{\bar{z}}\kappa$ are of the form
\begin{equation} \label{defkappabeta}
\kappa=\sum_{j=1}^{n}k_j\psi_j,\ D_{\bar{z}}\kappa=\sum_{j=1}^{n}\beta_j\psi_j,\
\hbox{with }\beta_j=k_{j\bar{z}}- \sum_{j=1}^{n}\bar{b}_{jl}k_l,\ j=1,\cdots,n.\end{equation}
We also have $k=\sqrt{\sum_{j=1}^n|k_j|^2}$   and
$I_{1,3}=\hbox{diag}\{-1,1,1,1\}$. Note that $b_{ij}=\langle \psi_{iz},\psi_j\rangle$ gives the normal connection.

Applying this  to TE Willmore surfaces we obtain, in view \eqref{eq-defD2} and  \cite[Proposition 2.2]{DoWa12}, the following result.
\begin{proposition}\label{cor-TEW-S3} For a TE Willmore surface in $S^{n+2}$, its holomorphic potential
\begin{equation}\label{eq-TE-Willmore-S3}
\xi =D(\lambda)\dd z=\left(
                   \begin{array}{cc}
                     \tilde{A}_1& B_1(\lambda) \\
                     -(B_1(\lambda))^tI_{1,3} & \tilde{A}_2 \\
                   \end{array}
                 \right)\dd z,
\end{equation}
with
\[\tilde A_1=\left.\left(
                             \begin{array}{ccccc}
                               0 & 0 & \frac{1-Re(s)-2|k|^2}{\sqrt{2}} &  \frac{Im(s)}{\sqrt{2}}  \\
                               0 & 0 & \frac{1+Re(s)+2|k|^2}{\sqrt{2}} &- \frac{Im(s)}{\sqrt{2}}  \\
                               \frac{1-Re(s)-2|k|^2}{\sqrt{2}} & -\frac{1+Re(s)+2k^2}{\sqrt{2}} & 0 & 0   \\
                                \frac{Im(s)}{\sqrt{2}} &  \frac{Im(s)}{\sqrt{2}} & 0 & 0  \\
                             \end{array}
                           \right)\right|_{z=0},\ \tilde A_2= \left.\left(A_2+\overline{A_2}\right)\right|_{z=0},\]
                           and
                           \[
                           B_1(\lambda)=\left.\left(\lambda^{-1}B_1+\lambda \overline{B_1}\right)\right|_{z=0}.\]
\end{proposition}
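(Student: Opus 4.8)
The plan is to combine the explicit block form of the Maurer--Cartan form displayed above with the general Delaunay-potential formula \eqref{eq-defD2} of Theorem \ref{thm-mcform}. The conceptual input I would invoke first is \cite[Proposition 2.2]{DoWa12}, which guarantees that the oriented conformal Gauss map $Gr$ of a Willmore surface in $S^{n+2}$ is a primitive harmonic map into the symmetric space $SO^+(1,n+3)/(SO^+(1,3)\times SO(n))$; this places us in the setting of Section 4 and legitimizes writing down a Delaunay potential $D(\lambda)$ via \eqref{eq-defD2}. For this symmetric space ($k=2$) the defining involution $\sigma$ is conjugation by $\mathrm{diag}(I_4,-I_n)$, so its fixed algebra $\mathfrak{k}=\mathfrak{so}(1,3)\oplus\mathfrak{so}(n)$ is precisely the block-diagonal part and $\mathfrak{m}$ the off-diagonal part. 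Reading off the displayed $\alpha$ I would therefore record
\[
\alpha_{\mathfrak k}'=\begin{pmatrix}A_1&0\\0&A_2\end{pmatrix},\qquad \alpha_{\mathfrak m}'=\begin{pmatrix}0&B_1\\B_2&0\end{pmatrix},
\]
together with the complex-conjugate expressions for $\alpha_{\mathfrak k}''$ and $\alpha_{\mathfrak m}''$.

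Substituting these into \eqref{eq-defD2} and evaluating at $z=0$ yields the block matrix
\[
D(\lambda)=\left.\begin{pmatrix} A_1+\overline{A_1} & \lambda^{-1}B_1+\lambda\overline{B_1}\\[2pt] \lambda^{-1}B_2+\lambda\overline{B_2} & A_2+\overline{A_2}\end{pmatrix}\right|_{z=0},
\]
which already exhibits the required form $\lambda^{-1}D_{-1}+D_0+\lambda D_1$ with $D_0\in\mathfrak{k}$ and $D_{\pm1}\in\mathfrak{m}^{\C}$. The $(2,2)$-block is $\tilde A_2$ and the $(1,2)$-block is $B_1(\lambda)$ by definition, so two of the four blocks require nothing further. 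For the $(2,1)$-block I would invoke the relation $B_2=-B_1^tI_{1,3}$ from \eqref{B1}: since $I_{1,3}$ is real, $\lambda^{-1}B_2+\lambda\overline{B_2}=-\bigl(\lambda^{-1}B_1+\lambda\overline{B_1}\bigr)^tI_{1,3}=-(B_1(\lambda))^tI_{1,3}$, matching the claimed off-diagonal entry.

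The one genuine calculation is the $(1,1)$-block $A_1+\overline{A_1}$, and I expect this to be the main (though still routine) step. Here I would use the explicit entries \eqref{s}, keeping in mind the crucial reality property $k^2=\sum_j|k_j|^2\in\R$ while $s$ is genuinely complex, so that conjugation acts only on $s$. A short computation then gives $s_1+\overline{s_1}=\tfrac{1}{\sqrt2}(1-\mathrm{Re}(s)-2|k|^2)$, $s_3+\overline{s_3}=\tfrac{1}{\sqrt2}(1+\mathrm{Re}(s)+2|k|^2)$, and $s_2+\overline{s_2}=\tfrac{\mathrm{Im}(s)}{\sqrt2}=-(s_4+\overline{s_4})$; carrying these through the antisymmetric sign pattern of $A_1$ reproduces every entry of $\tilde A_1$. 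Assembling the four blocks then gives exactly \eqref{eq-TE-Willmore-S3}. The only place where one must be attentive is this reality bookkeeping for $k^2$ versus $s$; the rest is a direct transcription of the grading into the block decomposition.
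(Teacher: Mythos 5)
Your proposal is correct and follows exactly the route the paper intends: the paper derives this proposition in one line from \eqref{eq-defD2} together with \cite[Proposition 2.2]{DoWa12}, and your write-up simply makes explicit the block bookkeeping ($\mathfrak{k}$ = diagonal blocks, $\mathfrak{m}$ = off-diagonal blocks, $B_2=-B_1^tI_{1,3}$) and the reality computation $s_j+\overline{s_j}$ that the paper leaves implicit. The entries of $\tilde A_1$ you obtain check out against \eqref{s}, including the point that $k^2=\sum_j|k_j|^2$ is real while $s$ is not.
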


\begin{remark}
It is important to observe that the form of $A_1$ implies  that the ``Condition a)'' of
Theorem 3.11 of \cite{DoWa10} is satisfied. This implies in the TE case that in some neighbourhood of the base point $0$ the surface is an immersion, which implies that there is a strip $\St$ containing the real axis on which the surface constructed from $\xi$ actually is an immersion.
We will discuss  in Section \ref{S-imme} the issue of what singularities the surface derived from $\xi$ has.
We will show that this surfaces does not have any branch points, i.e. points, where the surfaces is  smooth, but has a differential of rank $<2$.
\end{remark}

\subsection{ The immersion property of TE Willmore maps on strips}\label{S-imme}

In this subsection we will show that for the TE harmonic map defined on a strip $\mathbb{S}$ corresponding to a Willmore map $y$, the Willmore immersion  $y$ will be  defined on the strip $\St$.

The discussion of this subsection is applicable to many other classes of ``integrable surfaces". We restrict attention to Wilmore surfaces since this will not require to introduce new mathematical machinery.

\begin{theorem}\label{nobranchWill}
  Let $y: \mathbb{S}\rightarrow S^{n+2}$ be a conformal, real analytic, TE Willmore map defined on some strip $\St$. Then it does not have any branch points on $\St$. In particular, $y$ does not have any branch points if it is periodic.
\end{theorem}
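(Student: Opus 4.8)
The plan is to show that a branch point—a point where the map $y$ is smooth but its differential has rank $<2$—is incompatible with the translational equivariance together with the specific structure of the potential. The key observation is that branch points of a Willmore surface correspond precisely to zeros of the conformal Hopf differential data, or more precisely to degeneracies of the $(1,0)$-part of the Maurer-Cartan form in the $\mathfrak{m}$-directions; by \eqref{eq-TE-Willmore-S3} and the remark following Proposition \ref{cor-TEW-S3}, the relevant block $\tilde A_1$ evaluated at the base point has the rigid structure coming from \eqref{s}, and in particular its first two columns encode the immersion condition (``Condition a)'' of Theorem 3.11 of \cite{DoWa10}). First I would recall that for a conformal immersion the metric is $e^{2\omega}|\dd z|^2$ and a branch point is exactly a zero of $e^{2\omega}$; in terms of the canonical lift $Y$ and the frame \eqref{eq-F} this is governed by the entries $s_1,s_2,s_3,s_4$ of $A_1$, which by \eqref{s} have constant ``leading'' part $\tfrac{1}{2\sqrt 2}$ independent of $s$ and $k$.

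The heart of the argument is the translational equivariance. By Theorem \ref{thm-mcform}, the extended frame satisfies $F(z+t,\bar z+t,\lambda)=e^{tD(\lambda)}F(z,\bar z,\lambda)$ with the constant Delaunay potential $D(\lambda)$, and at $\lambda=1$ the Maurer-Cartan form $\alpha=F^{-1}\dd F$ depends only on $y=\mathrm{Im}\,z$ (Theorem of Section 3). Consequently the conformal factor $e^{2\omega}$ and the quantities $s_1,\dots,s_4,k,\beta_j$ that determine the rank of $\dd y$ are functions of $y$ alone, invariant under the real translations $x\mapsto x+t$. The plan is then: (i) express the branching locus as the zero set of a real-analytic function $\Phi(y)=e^{2\omega(y)}$ (equivalently, of the relevant minor of $\dd y$) which, by TE, is a function of $y$ only; (ii) observe that if $\Phi(y_0)=0$ for some $y_0$, then by equivariance the whole horizontal line $\{\,\mathrm{Im}\,z=y_0\,\}$ consists of branch points, so the surface degenerates on an entire line.

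The final step is to rule out degeneration on a whole line. Here I would invoke the explicit form of $\tilde A_1$ in Proposition \ref{cor-TEW-S3}: by the construction of the potential, the first two columns of the $(1,0)$-part never vanish simultaneously because the constant terms $\tfrac{1}{\sqrt 2}$ in the $(1,3)$ and $(2,3)$ entries of $\tilde A_1$ (arising from the ``$1$'' in \eqref{s}) force the relevant $\mathfrak{m}$-component of $\alpha'$ to be nowhere zero. This is precisely the content of the remark after Proposition \ref{cor-TEW-S3}, which asserts that ``Condition a)'' of \cite[Theorem 3.11]{DoWa10} holds identically; since that condition is exactly the nonvanishing of the immersion data and it depends only on the constant structural entries of $\tilde A_1$ rather than on $s$ or $k$, it persists along the entire line $\{\mathrm{Im}\,z=y_0\}$ by equivariance. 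Hence $\Phi$ cannot vanish anywhere, and there are no branch points. The periodicity statement follows immediately, since a periodic TE map is still TE and the argument is unchanged.

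I expect the main obstacle to be making step (iii) rigorous: namely, translating the abstract statement that ``Condition a)'' holds into a clean analytic proof that the conformal factor $e^{2\omega(y)}$ is bounded away from zero along each horizontal line, and verifying that the constant entries of $\tilde A_1$ genuinely control the rank of $\dd y$ at $\lambda=1$ rather than merely at the level of the loop-group potential. One must be careful that the Iwasawa decomposition and the passage from $D(\lambda)$ back to the actual immersion $y$ preserve this nonvanishing; the safe route is to argue directly at $\lambda=1$ using the explicit frame \eqref{eq-F}, showing that $Y_z$ has a component of constant nonzero length in the $e_1,e_2$ directions forced by \eqref{s}, so that $|\dd y|$ never vanishes.
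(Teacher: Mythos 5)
Your steps (i)--(ii) coincide with the paper's opening move: by real translational equivariance (and conformality, which forces $\dd y=0$ at a branch point), a single branch point propagates to an entire horizontal branch line $\{\operatorname{Im} z = y_0\}$. The divergence, and the gap, is in step (iii), where you try to rule out such a line by appealing to the constant entries of $\tilde A_1$ in Proposition \ref{cor-TEW-S3} and to ``Condition a)'' of \cite[Theorem 3.11]{DoWa10}. This does not work as stated, for two reasons. First, Condition a) is a condition at the base point and, as the remark following Proposition \ref{cor-TEW-S3} itself says, it only guarantees that the surface is an immersion in a neighbourhood of $z=0$, hence on \emph{some} strip about the real axis; it says nothing about horizontal lines $\operatorname{Im} z = y_0$ with $y_0\neq 0$. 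The Maurer--Cartan form of the extended frame at such points is $V^{-1}D(\lambda)V - \tfrac{i}{2}V^{-1}V_v$ with $V=V(v,\lambda)$ the Iwasawa factor, so the ``constant structural entries'' of $D(\lambda)$ do not persist along other horizontal lines — they get conjugated and perturbed by $V$. Second, the normalization producing the rigid form \eqref{s} (the ``$1$'' in $s_1,\dots,s_4$) comes from the canonical lift $Y=e^{-\omega}(1,y)$ with $|Y_z|^2=\tfrac12$, which presupposes $e^{2\omega}\neq 0$; at a putative branch line this normalization is unavailable, so using \eqref{s} to exclude branching there is circular. (Relatedly, the conformal Gauss map can remain a perfectly smooth harmonic map across branch points of $y$, so the loop-group frame cannot by itself detect them.)

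The paper closes the argument by an elementary analytic device that you are missing: writing $y(u,v)=p_0+\sum_{j\ge l}y_j(u)v^j$ near the branch line (the constant $p_0$ coming from $y_u(u,0)\equiv 0$), one finds $\|y_v(u_0,v)\|^2\sim \|l\,y_l(u_0)\|^2 v^{2l-2}$ while $\|y_u(u_0,v)\|^2=O(v^{2l})$, so the two sides of the conformality identity $\|y_u\|^2=\|y_v\|^2$ vanish to different orders as $v\to 0$ — a contradiction. I would replace your step (iii) with this order-of-vanishing argument (or supply an independent proof that the conformal factor, as a function of $v$ alone, cannot vanish); as it stands your proposal identifies the right reduction but does not complete the proof.
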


\begin{proof}
Let $y: \mathbb{S}\rightarrow S^{n+2}$  be the TE Willmore map, then
\[y(u,v)=R(u)y(0,v), \]
$\hbox{with } R(u):\R\rightarrow SO^+(1,n+3)$ being the 1-parameter group. Assume w.l.g. that $(0,0)\in \St$. Suppose  that $(0,0)$ is a branch point of $y$. Then $(u,0)$ is a branch line. Since $y$ and $y_u$ are real analytic on $\St$ and $y$ is conformal, we have  $y_u(u,0)=y_v(u,0)=0$ for all $u\in\R$. Moreover, since $y(u,v)$ is real analytic, we have
 \[y(u,v)=y_0(u)+\sum_{j=1}^{\infty}y_j(u)v^j\]
for some $y_0(u)$, $y_j(u)$ defined on $\St$. So $y_u(u,v)=y_0'(u)+\sum_{j=1}^{\infty}y_j'(u)v^j$, where $y_j'(u)=\frac{d y_j(u)}{du}$ for $0\leq j<\infty$. Substituting  $y_u(u,0)=0$, we obtain
$y_0'(u)=y_u(u,0)=0$, which implies that $y_0(u)=p_0\in S^{n+2}$ is constant. So
 \[y(u,v)=p_0+\sum_{j=l}^{\infty}y_j(u)v^j, \hbox{ with } y_l(u)\not\equiv0.\]

Choose $u_0\in\R$ such that $y_l(u_0)\neq0$. As a consequence,  we have near $(u_0,0)$
\[||y_u(u_0,v)||^2= ||y_l'(u_0)||^2v^{2l} {+o(v^{2l +1}),}\ ||y_v(u_0,v)||^2= ||ly_l(u_0)||^2v^{2l-2}+o(v^{2l-1}), \]
contradicting  the conformality condition $||y_u(u,v)||^2=||y_v(u,v)||^2$ on the whole $\St$,
since the two expressions converge to $0$ for $v \rightarrow 0$  with different speeds.
\end{proof}

From the definition of an associated family, we obtain immediately
 \begin{corollary} No member of the associated associated family of a Willmore surface as considered in Theorem \ref{nobranchWill}, does have any  branch point.
\end{corollary}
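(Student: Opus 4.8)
The plan is to deduce the corollary directly from Theorem \ref{nobranchWill} by observing that every member of the associated family is itself a conformal, real analytic, TE Willmore map defined on the same strip $\St$, so that the theorem applies verbatim to each of them. First I would recall what the associated family is in this loop group framework: for a fixed $\lambda_0 \in S^1$, the map $f_{\lambda_0}(z,\bar z) = F(z,\bar z, \lambda_0) \bmod K$ is the associated family member, where $F(z,\bar z,\lambda)$ is the extended frame of Theorem \ref{thm-mcform}, and the underlying Willmore surface $y_{\lambda_0}$ is recovered from the conformal Gauss map frame \eqref{eq-F} evaluated at $\lambda = \lambda_0$. The essential point is that passing from $\lambda = 1$ to a general $\lambda = \lambda_0 \in S^1$ preserves all three hypotheses of the theorem simultaneously.

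The key steps, in order, are as follows. First I would verify that each associated family member is again a Willmore map into $S^{n+2}$: this is standard in the theory (the associated family of a Willmore surface consists of Willmore surfaces), and at the level of the excerpt it follows from the fact that $\alpha_\lambda$ in \eqref{prim} is integrable for all $\lambda \in S^1$, so each $F(\cdot,\cdot,\lambda_0)$ frames a harmonic (hence Willmore) conformal Gauss map of the required $k$-symmetric type. Second, I would check conformality and real analyticity: conformality of $y_{\lambda_0}$ is built into the normalization of the frame \eqref{eq-F} (the off-diagonal block $A_1$ retains the conformal structure for every $\lambda$, which is precisely the observation made in the Remark following Proposition \ref{cor-TEW-S3} about ``Condition a)'' holding in the whole associated family), and real analyticity is automatic since primitive harmonic maps are real analytic. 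Third, and most importantly, I would confirm that the TE property is preserved: by Theorem \ref{thm-mcform}(1), the extended frame satisfies $F(z+t,\bar z+t,\lambda) = e^{tD(\lambda)} F(z,\bar z,\lambda)$ for \emph{all} $\lambda$ simultaneously, so fixing $\lambda = \lambda_0$ yields $y_{\lambda_0}(u+t,v) = R_{\lambda_0}(t)\, y_{\lambda_0}(u,v)$ with $R_{\lambda_0}(t) = $ the $SO^+(1,n+3)$-action induced by $e^{tD(\lambda_0)}$, which is exactly translational equivariance on the same strip $\St$.

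With these three points established, each $y_{\lambda_0}$ satisfies every hypothesis of Theorem \ref{nobranchWill}, and the conclusion that $y_{\lambda_0}$ has no branch points on $\St$ follows immediately by applying the theorem to $y_{\lambda_0}$ in place of $y$. I expect the main obstacle to be the bookkeeping in the third step, namely making precise that the equivariance group $R(t)$ deforms to a $\lambda$-dependent group $R_{\lambda_0}(t)$ while still landing in $SO^+(1,n+3)$ and acting on the \emph{same} strip $\St$; this is where one must invoke Theorem \ref{thm-mcform} to see that $D(\lambda)$ governs the equivariance uniformly in $\lambda$, rather than treating each member independently. Once that uniformity is in hand, no new estimate is needed: the branch-point argument of Theorem \ref{nobranchWill}, which rested only on conformality plus real analyticity plus the factorization $y(u,v)=R(u)y(0,v)$, carries over word for word.
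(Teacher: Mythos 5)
Your proposal is correct and follows essentially the same route as the paper, which simply observes that the conclusion is immediate from the definition of the associated family (each member being again a conformal, real analytic, TE Willmore map, so Theorem \ref{nobranchWill} applies verbatim); you have merely spelled out the verification of each hypothesis that the paper leaves implicit. The only minor caveat is that, since the Iwasawa decomposition is not global for the non-compact group involved, the strip of definition may a priori depend on $\lambda_0$, but this does not affect the argument, as the theorem applies to whatever strip each member is defined on.
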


\begin{remark}\
\begin{enumerate}
\item The theorem above means that if the Willmore map is defined on $\mathbb S$, it is an immersion. But it does not tell how large the strip $\mathbb S$ will be.

\item In the theorem above we have asserted that  TE Willmore surface defined on some strip does not have any branch points. However, while the original potential for a TE Willmore surface is defined on all of $\C$, smooth solutions may only exist on (perhaps infinitely many different ) strips.
The basic reason is that starting from the potential $\xi = D(\lambda)dz$ one needs to perform an Iwasawa splitting. Since the group $G$ in $G/K$ is non-compact in the Willmore case, the Iwasawa splitting will not be defined globally, equivalently, the coefficients of the frame will become  singular along some lines (by the TE property).
\end{enumerate}
\end{remark}

%%%%%%%%%%%%%%%%%%%%%%%%%

\section{TE primitive harmonic maps with real and with non-real periods} \label{ss-cylinder}

In this section we will first consider separately TE primitive harmonic maps with real and with non-real
periods respectively. Correspondingly we will discuss TE Willmore cylinders with real and with
non-real periods respectively.
%%%%%%%%%%%%%%%%%%%%%%%%%%
\subsection{TE primitive harmonic cylinders and TE Willmore cylinders}
Let's start with a basic definition.
\begin{definition}\ \begin{enumerate}
\item Let $f:\C\rightarrow G/K$ be a TE primitive harmonic map. Assume there exists some $\omega \in \C$ such that $f(z+\omega) = f(z)$ for all $z \in \C $, then the map
\begin{equation}
f_\omega : \C/ {\omega \mathbb{Z} } \rightarrow  G/K,~~f_\omega (z \mod \omega \mathbb{Z} ):= f(z)
\end{equation}
is called a {\bf TE primitive harmonic cylinder}.

\item
Let  $y:\C \rightarrow S^{n+2}$ be a TE Willmore surface in $S^{n+2}$, with a conformal Gauss map $f:\C \rightarrow  G/K = SO^+ (1, n+3)/ {SO^+(1,3) \times SO(n)}$.
Assume there exists some $\omega \in \C$ such that $y(z+\omega) = y(z)$ for all $z \in \C $, then the map
\begin{equation}
y_\omega : \C/ {\omega \mathbb{Z} } \rightarrow  S^{n+2},~~y_\omega (z \mod \omega \mathbb{Z} ):= y(z)
\end{equation}
is called a {\bf TE Willmore cylinder}.
\end{enumerate}
\end{definition}

Recall that for a CMC surface in $\R^3$  it can happen that  its Gauss map is a TE harmonic cylinder with period $\omega$, but that the surface itself is not a TE CMC cylinder with  period  $\omega$. {See the last section of \cite{Bu-Ki} for a reference.}
In contrast to this, for Willmore surfaces we have

\begin{theorem}\label{lemma-sym}
$y$ is a TE Willmore cylinder with real period  $\omega$ if and only if its oriented conformal Gauss map $f$ is a TE harmonic cylinder with the same real period  $\omega$.
\end{theorem}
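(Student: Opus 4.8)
The plan is to turn both periodicity statements into a single algebraic condition on the one–parameter group $R$, and then to compare the symmetry groups of $y$ and of its conformal Gauss map. Write $z=u+iv$. Since $y$ is TE it satisfies $y(z+t)=R(t)\,y(z)$ for all $z$ and all $t\in\R$, where $R(t)=\exp(tD)$ is a one–parameter group in $SO^+(1,n+3)$. The first point I would record is that the \emph{same} group governs $f$: the oriented conformal Gauss map is produced from $y$ by the $SO^+(1,n+3)$–equivariant mean–curvature–sphere construction, so that the Gauss map of $A\cdot y$ equals $A\cdot f$ for any $A\in SO^+(1,n+3)$; applying this with $A=R(t)$ and relabelling the parameter gives $f(z+t)=R(t)\,f(z)$. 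Taking $t=\omega\in\R$, both cylinder conditions become group–theoretic. Indeed $y$ is a TE Willmore cylinder with real period $\omega$ iff $R(\omega)\,y(z)=y(z)$ for all $z$, i.e. $R(\omega)\in\mathcal{C}(y)$, and likewise $f$ is a TE harmonic cylinder with period $\omega$ iff $R(\omega)\in\mathcal{C}(f)$, where $\mathcal{C}(\cdot)$ is the stabiliser introduced in Section 2 (defined for the $SO^+(1,n+3)$–space $S^{n+2}$ exactly as for $G/K$). Hence the theorem is equivalent to the single identity $\mathcal{C}(y)=\mathcal{C}(f)$.

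The inclusion $\mathcal{C}(y)\subseteq\mathcal{C}(f)$ gives the implication ``$y$ cylinder $\Rightarrow$ $f$ cylinder'' and is immediate from equivariance: if $A\cdot y=y$, then the Gauss map of $y$ equals the Gauss map of $A\cdot y$, which is $A\cdot f$; thus $A\cdot f=f$ and $A\in\mathcal{C}(f)$. This is the easy half, and it uses nothing beyond naturality of the conformal Gauss map.

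The substance is the reverse inclusion $\mathcal{C}(f)\subseteq\mathcal{C}(y)$, giving ``$f$ cylinder $\Rightarrow$ $y$ cylinder''. Here I would invoke the one–to–one correspondence between a Willmore surface and its oriented conformal Gauss map recalled at the start of Section 5. Let $A\in\mathcal{C}(f)$ and set $\tilde y:=A\cdot y$. Since $A$ is conformal, $\tilde y$ is again a Willmore surface, and by equivariance its oriented conformal Gauss map is $A\cdot f=f$, the same as that of $y$. By injectivity of the correspondence, $\tilde y=y$, i.e. $A\,y(z)=y(z)$ for all $z$, so $A\in\mathcal{C}(y)$. Combining the two inclusions yields $\mathcal{C}(y)=\mathcal{C}(f)$, and the theorem follows from the reduction of the first paragraph. (Under the standing fullness assumption both groups equal the trivial centre of $SO^+(1,n+3)$, so each periodicity condition reduces transparently to $R(\omega)=\mathbf e$, i.e. to periodicity of the monodromy $e^{\omega D}$.)

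I expect the delicate point to be exactly the inclusion $\mathcal{C}(f)\subseteq\mathcal{C}(y)$, and it is precisely this step that fails for CMC surfaces in $\R^3$. A conformal map fixing every mean curvature sphere $f(z)$ fixes the oriented $(1,3)$–plane $V(z)=\mathrm{span}\{\tfrac1{\sqrt2}(Y+N),\tfrac1{\sqrt2}(-Y+N),e_1,e_2\}$, but \emph{a priori} it could interchange the two null envelopes of the sphere congruence, sending $y(z)$ to its dual; this is the analogue of the translational freedom in the Sym formula that produces CMC Gauss–map cylinders whose surfaces do not close up. What excludes this for Willmore surfaces is that the \emph{oriented} conformal Gauss map remembers which envelope is $y$, i.e. the injectivity of $y\mapsto f$. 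Thus the main obstacle is to rely on (or, if one wants a self–contained argument, to verify directly via the rigid block structure of $B_1$ in \eqref{B1} that the gauge $k(z)=F(z)^{-1}R(\omega)F(z)\in K$ preserves the null line $[Y]$) the one–to–one correspondence; everything else is the formal reduction above.
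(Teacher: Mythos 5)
Your proposal is correct and rests on exactly the same key fact as the paper's proof, namely the one-to-one correspondence between a Willmore surface and its oriented conformal Gauss map (cited from Ma's thesis): the paper's argument is a one-line invocation of that correspondence, and your reduction to $\mathcal{C}(y)=\mathcal{C}(f)$ via the equivariance $f(z+t)=R(t)f(z)$ is just a more explicit spelling-out of the same idea. Your closing remarks correctly identify that the only nontrivial content is the injectivity of $y\mapsto f$, which is precisely what the paper delegates to the cited reference.
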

\begin{proof}
{ A Willmore surface and its oriented conformal Gauss map are in one to one correspondence \cite{Ma}. As a consequence $y$ has a real period  $\omega$ if and only if its oriented conformal Gauss map $f$ has a real period $\omega$.}
\end{proof}

%%%%%%%%%%%%%%%%%%%%%%%
\subsubsection{TE harmonic cylinders with real period}
%%%%%%%%%%%%%%%%%%%%%%%%

The simplest  choice of a period certainly is a real period. Let' recall here that we have assumed all surfaces to be full. In particular we have assumed that the center of $G$ is contained in $K$.

\begin{theorem}
Consider the  primitive harmonic map $f: \St \rightarrow G/K$. Then with the notation introduced above, a real number $p$ induces for $\lambda = 1$ a TE primitive harmonic cylinder
$f_p : \St/ {p \mathbb{Z} } \rightarrow  G/K,$ if and only if  $\exp( p D(\lambda)|_{\lambda = 1})$ lies in the center of $G$ (which is contained in $K$).
\end{theorem}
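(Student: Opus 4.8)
The plan is to characterize the period condition directly in terms of the extended frame $F(z,\bar z, \lambda)$ and the one-parameter group $e^{tD(\lambda)}$, and then descend to the statement about the cylinder. First I would recall from Theorem \ref{thm-mcform} and equation \eqref{equi-F-loop-1} that the extended frame satisfies $F(z+t,\bar z + t, \lambda) = e^{tD(\lambda)} F(z,\bar z, \lambda)$ for real $t$. Since $p$ is real, applying this with $t = p$ gives $F(z+p, \overline{z+p}, \lambda) = e^{pD(\lambda)} F(z,\bar z, \lambda)$ for all $z \in \St$. Setting $\lambda = 1$ and projecting modulo $K$, the harmonic map $f(z) = F(z,\bar z, 1) \bmod K$ satisfies $f(z+p) = e^{pD(1)}.f(z)$. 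Thus $f(z+p) = f(z)$ for all $z$ if and only if $e^{pD(1)}.f(z) = f(z)$ for all $z$, i.e. $e^{pD(1)} \in \mathcal{C}(f)$, the stabilizer subgroup introduced in the opening lemma.

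The second main step is to invoke the standing fullness assumption. Since we have assumed all surfaces to be full (Definition \ref{full}), $\mathcal{C}(f)$ coincides exactly with the center of $G$. Therefore $e^{pD(1)} \in \mathcal{C}(f)$ is equivalent to $e^{pD(1)}$ lying in the center of $G$, which by the $k$-symmetric space hypothesis ($Z(G) \subset K$) is contained in $K$. This gives precisely the asserted equivalence: $f(z+p) = f(z)$ for all $z$, which is exactly the condition that $p$ induces a well-defined TE primitive harmonic cylinder $f_p : \St/p\Z \to G/K$, holds if and only if $\exp(pD(\lambda)|_{\lambda=1})$ lies in the center of $G$.

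I would present the two directions explicitly for clarity. For the forward direction, periodicity $f(z+p)=f(z)$ combined with $f(z+p) = e^{pD(1)}.f(z)$ forces $e^{pD(1)}.f(z) = f(z)$ for all $z$, hence $e^{pD(1)} \in \mathcal{C}(f) = Z(G)$. For the converse, if $e^{pD(1)} \in Z(G)$, then since the center fixes every point of $G/K$ (part (3) of the opening lemma, or directly because $Z(G) \subset K$ acts trivially by left translation on $G/K$), we get $f(z+p) = e^{pD(1)}.f(z) = f(z)$, so the map descends to the quotient cylinder.

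The main obstacle, or rather the only genuine subtlety, will be handling the role of $\lambda$ correctly: the quasi-invariance relation holds in the loop group for all $\lambda \in S^1$, but the geometric periodicity statement concerns only the actual surface at $\lambda = 1$. I must be careful to evaluate at $\lambda = 1$ before projecting, and to note that the center condition is stated only for $D(\lambda)|_{\lambda=1}$, not for the full loop $D(\lambda)$; periodicity of the whole extended frame or associated family would be a strictly stronger requirement and is not what is claimed here. Everything else is a direct translation between the stabilizer $\mathcal{C}(f)$ of the preliminary lemma and the fullness hypothesis, so no hard analysis or estimate is needed.
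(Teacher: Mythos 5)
Your proposal is correct and follows essentially the same route as the paper: the paper's proof is a one-line observation that the monodromy of the real translation by $p$ is $\exp(pD(\lambda))$, and you simply make explicit the step it leaves implicit, namely that $f(z+p)=f(z)$ holds iff $e^{pD(1)}\in\mathcal{C}(f)$, which equals the center of $G$ by the standing fullness assumption. Your care about evaluating at $\lambda=1$ before projecting is consistent with the statement and adds clarity without changing the argument.
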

\begin{proof}
It suffices to point out that  the monodromy matrix for a real translation t is exactly the real matrix $\exp( t D(\lambda )) $ from which the claim follows.
\end{proof}

Applying this to Willmore surfaces we obtain
 \begin{corollary}
With the notation introduced above, in view of Proposition 2.2 of \cite{DoWa12}, let $D(\lambda)$ be of the form in Proposition 2.2 of \cite{DoWa12}. Then  a real number $p$ induces for $\lambda = 1$ a TE Willmore cylinder
$y_p : \St / {p \mathbb{Z} } \rightarrow  S^{n+2},$ if and only if  $\exp( p D(\lambda)|_{\lambda = 1}) = I$ if and only if the eigenvalues of $D(\lambda = 1)$ are integer multiples of $ \frac{2 \pi}{p}$.
\end{corollary}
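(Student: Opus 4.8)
The plan is to chain together the two results immediately preceding this Corollary and then to convert the resulting group-theoretic condition into a spectral one.

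First I would reduce the statement about the Willmore surface $y$ to a statement about its oriented conformal Gauss map $f$. By Theorem \ref{lemma-sym}, $y$ descends to a TE Willmore cylinder with real period $p$ if and only if $f$ descends to a TE harmonic cylinder with the same real period $p$; this uses only the one-to-one correspondence between $y$ and $f$ and costs nothing. Next, by the Theorem immediately preceding this Corollary, $f$ induces for $\lambda=1$ a TE primitive harmonic cylinder $f_p$ exactly when the monodromy $\exp\!\big(p\,D(\lambda)|_{\lambda=1}\big)$ lies in the center of $G$. In the Willmore setting $G=SO^+(1,n+3)$, whose center is trivial, so ``lying in the center'' collapses to the equation $\exp\!\big(p\,D(\lambda=1)\big)=I$. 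This yields the first asserted equivalence, and the explicit shape of $D(\lambda)$ from Proposition \ref{cor-TEW-S3} is only needed to know that we are indeed in this setting.

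It remains to characterize $\exp(pD(1))=I$ spectrally. Writing $X:=p\,D(\lambda=1)$, the necessary direction is immediate from the spectral mapping theorem: the eigenvalues of $\exp X$ are the exponentials of the eigenvalues of $X$, so $\exp X=I$ forces $\exp(p\mu)=1$ for every eigenvalue $\mu$ of $D(1)$, i.e. $p\mu\in 2\pi i\,\Z$, which says precisely that the eigenvalues of $D(1)$ are (purely imaginary) integer multiples of $\tfrac{2\pi}{p}$, up to the factor $i$. For the converse I would appeal to the additive Jordan decomposition $X=X_s+X_n$ into commuting semisimple and nilpotent parts: then $\exp X=\exp(X_s)\,\exp(X_n)$ is the multiplicative Jordan (semisimple--unipotent) decomposition, and matching it against $I=I\cdot I$ together with uniqueness forces $\exp(X_n)=I$, hence $X_n=0$ because $\exp$ is injective on nilpotent matrices. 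Thus $X$ is automatically semisimple, and $\exp X=I$ with $X$ semisimple is equivalent to all eigenvalues lying in $2\pi i\,\Z$.

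The one genuine subtlety — and the step I expect to require the most care — is precisely this semisimplicity: a priori the Delaunay matrix $D(1)$ could carry a nontrivial nilpotent part, and without ruling it out the clean ``eigenvalues in $\tfrac{2\pi i}{p}\Z$'' criterion would only be necessary, not sufficient. The Jordan-decomposition argument above settles this in general, so no special structure of the explicit potential is actually needed for this point. I would, however, double-check the factor of $i$ against the paper's normalization of the eigenvalues, since $D(1)\in\mathfrak{so}(1,n+3)$ lives in an indefinite orthogonal algebra and its eigenvalues need not be purely imaginary until the periodicity condition $\exp(pD(1))=I$ is imposed, which then forces them onto the imaginary axis.
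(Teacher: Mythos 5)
Your reduction to the conformal Gauss map via Theorem \ref{lemma-sym}, followed by the preceding Theorem and the triviality of the center of $SO^+(1,n+3)$, is exactly the paper's (one-line) argument for the first equivalence, and your remark about the missing factor of $i$ is correct: the paper's later computations consistently use the condition in the form ``eigenvalues in $\tfrac{2\pi i}{p}\,\Z$''.

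There is, however, a genuine gap in your treatment of the second equivalence, located precisely at the point you single out as the main subtlety. Your Jordan-decomposition argument \emph{starts} from the hypothesis $\exp X=I$ (you match $\exp(X_s)\exp(X_n)$ against $I\cdot I$) and concludes $X_n=0$; this only strengthens the necessity direction, showing that $\exp X=I$ forces semisimplicity together with eigenvalues in $2\pi i\,\Z$. It does not prove sufficiency, which must start from the eigenvalue condition and derive $\exp X=I$ --- and for that one needs to know in advance that $X$ is semisimple, which the eigenvalue condition alone does not provide. So the claim that ``the Jordan-decomposition argument above settles this in general, so no special structure of the explicit potential is actually needed'' is a logical error, not an omission. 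Indeed, Delaunay matrices need not be semisimple: in the Proposition immediately following this Corollary the paper notes that for $c=\pm 1$ (where the eigenvalues of $D(1)$ are $0,0,0,\pm 2i$, hence all integer multiples of $\tfrac{2\pi i}{\pi}$) one has $\exp(pD(1))\neq I$ for every real $p$ --- exactly a failure of the sufficiency direction as literally stated. To close the gap one must either verify diagonalizability of $D(1)$ from the explicit form of the potential in each case of interest, or read the eigenvalue criterion (as the paper implicitly does) as shorthand for ``$D(1)$ is semisimple with eigenvalues in $\tfrac{2\pi i}{p}\,\Z$''. The paper's own proof does not address this point either, so your write-up is no worse than the source on the mathematics; but you explicitly claim to have resolved the issue when your argument runs in the wrong direction.
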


This follows from the last result, since the center of $SO^+(1, n+3)$ is trivial.

We have pointed out above that due to the non-global Iwasawa decoposition TE Willmore surfaces may only be defined on some strips. In the discussion below we thus always consider the strip containing the origin.
 \begin{proposition}
 Set
 \[k=\frac{e^{i\theta}}{\sqrt{2}},\ \beta=0, ~ s=2c i~ \hbox{ with } c\in\R, \]
 in \eqref{eq-TE-Willmore-S3}.
So
\[
D(\lambda)=\left(
                             \begin{array}{ccccc}
                               0 & 0 &0 &  \sqrt{2} c & 0 \\
                               0 & 0 & \sqrt{2} &-   \sqrt{2} c &0  \\
                               0 & -\sqrt{2} & 0 & 0& -\frac{\lambda^{-1}e^{i\theta}+\lambda e^{-i\theta} }{\sqrt{2}} \\
                                 \sqrt{2} c &    \sqrt{2} c & 0 & 0 & -\frac{i(\lambda^{-1}e^{i\theta}-\lambda e^{-i\theta}) }{\sqrt{2}} \\
                               0 & 0 & \frac{\lambda^{-1}e^{i\theta}+\lambda e^{-i\theta} }{\sqrt{2}} &  \frac{i(\lambda^{-1}e^{i\theta}-\lambda e^{-i\theta}) }{\sqrt{2}} &0 \\
                             \end{array}
                           \right)
                           \]
 By straightforward computations we obtain that the eigenvalues of $D(\lambda)|_{\lambda=1}$ are
 \[0, \pm\sqrt{-2\pm2\sqrt{1-2c\sin\theta\cos\theta-\sin^2\theta+c^2}}.\]
\begin{enumerate}
\item Set $\sin\theta=1$. Then $D(\lambda)$ produces some TE Willmore cylinders with real period $p$ if and only if
\[ c=\frac{m^2-l^2}{m^2+l^2}, m,l\in\mathbb{Z}^+.\]
Moreover, in this case we have $p=\pi\sqrt{m^2+l^2}$. Note that in the case $c=\pm1$, we do not have $\exp( p D(\lambda)|_{\lambda = 1}) = I$ for any $p\in\R$.
\item Set $c=0$. We see that the eigenvalues become
 \[0, \pm\sqrt{-2\pm2\cos\theta}.\]
So, there exist infinitely many TE Willmore cylinders (with different real periods) in the associated family produced by $D(\lambda)$.
\end{enumerate}
\end{proposition}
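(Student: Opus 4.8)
The plan is to reduce everything to the eigenvalue computation for the matrix $D(\lambda)|_{\lambda=1}$, already recorded in the proposition statement, and then to invoke the preceding Corollary. By that Corollary, the potential $D(\lambda)$ produces a TE Willmore cylinder with real period $p$ (at $\lambda=1$) precisely when $\exp(pD(\lambda)|_{\lambda=1})=I$, and this happens if and only if all eigenvalues of $D(\lambda=1)$ are integer multiples of $\frac{2\pi i}{p}$. Since $D(\lambda=1)$ lies in $\mathfrak{so}^+(1,n+3)$ and the eigenvalues given are $0$ and $\pm\sqrt{-2\pm 2\sqrt{1-2c\sin\theta\cos\theta-\sin^2\theta+c^2}}$, the strategy is to determine, under each specialization of the parameters, when these eigenvalues are purely imaginary with commensurable ratios.

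For part (1) I would first substitute $\sin\theta=1$ (so $\cos\theta=0$), which collapses the inner radical $1-2c\sin\theta\cos\theta-\sin^2\theta+c^2$ to $c^2$, giving inner square root $|c|$. Then the four nonzero eigenvalues become $\pm\sqrt{-2\pm 2|c|}$. For a real period we need each such eigenvalue to be purely imaginary (so that $\exp(pD)$ is an honest rotation returning to $I$); writing the two distinct nonzero eigenvalue magnitudes as $i a$ and $i b$ with $a=\sqrt{2-2|c|}$, $b=\sqrt{2+2|c|}$ (valid for $|c|<1$), the condition $\exp(pD)=I$ forces $pa$ and $pb$ to both lie in $2\pi\mathbb{Z}$. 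Thus $pa=2\pi l$ and $pb=2\pi m$ for positive integers $l,m$, and taking the ratio $b^2/a^2=(2+2|c|)/(2-2|c|)=(1+|c|)/(1-|c|)$ must equal $m^2/l^2$. Solving this for $|c|$ yields $|c|=(m^2-l^2)/(m^2+l^2)$, matching the claimed formula. Back-substituting to find $p$: from $pa=2\pi l$ and $a^2=2-2|c|=4l^2/(m^2+l^2)$, I get $a=2l/\sqrt{m^2+l^2}$, hence $p=2\pi l/a=\pi\sqrt{m^2+l^2}$, as stated. The boundary case $|c|=\pm 1$ makes one eigenvalue vanish to order two (a nilpotent Jordan block) rather than giving a genuine rotation, so $\exp(pD)\neq I$ for any real $p$; I would note this degeneracy explicitly as the reason for the exclusion.

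For part (2), setting $c=0$ collapses the inner radical to $1-\sin^2\theta=\cos^2\theta$, so the inner square root is $|\cos\theta|$ and the nonzero eigenvalues become $\pm\sqrt{-2\pm 2\cos\theta}$, i.e.\ purely imaginary of magnitudes $\sqrt{2-2\cos\theta}$ and $\sqrt{2+2\cos\theta}$. The existence of infinitely many real periods then follows by the same commensurability argument: as $\theta$ ranges over a suitable interval the ratio of the two magnitudes sweeps a continuum of values, infinitely many of which are rational, and for each such rational ratio $\exp(pD)=I$ holds for an appropriate $p$, yielding cylinders with distinct periods within the associated family. The one subtlety I would be careful about — and I expect this to be the main obstacle — is verifying that the eigenvalues are genuinely semisimple (diagonalizable) and purely imaginary in the relevant parameter range, so that $\exp(pD)=I$ is actually achievable rather than merely $\exp(pD)$ being block-diagonal; this requires checking that $D(\lambda=1)$ has no nontrivial Jordan blocks away from the excluded degenerate values, which is where the indefinite signature of $\mathfrak{so}^+(1,n+3)$ must be handled with care.
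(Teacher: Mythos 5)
Your proposal is correct and takes essentially the same route the paper does: the paper gives no separate proof beyond the preceding Corollary ($\exp(pD(\lambda)|_{\lambda=1})=I$ iff the eigenvalues are suitable integer multiples of $2\pi i/p$) and the phrase ``by straightforward computations'', and your commensurability argument reproduces exactly that computation, including $|c|=(m^2-l^2)/(m^2+l^2)$ and $p=\pi\sqrt{m^2+l^2}$. One small correction for part (2): the variation relevant to the associated family is over $\lambda=e^{i\phi}\in S^1$, not over the fixed potential parameter $\theta$; this is harmless because $D(e^{i\phi})$ coincides with $D(1)$ after replacing $\theta$ by $\theta-\phi$, so the eigenvalues become $0,\pm\sqrt{-2\pm2\cos(\theta-\phi)}$ and your density/commensurability argument applies verbatim.
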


\begin{corollary} Let $y$ be a TE Willmore cylinder in $S^3$.  Then
 there exist infinitely many TE Willmore cylinders (with different real periods) in the associated family of $y$.
\end{corollary}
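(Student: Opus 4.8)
The plan is to convert the closing-up condition into a statement about the spectrum of the Delaunay matrix $D(\lambda)$ and then to let the loop parameter $\lambda$ move that spectrum. First I would invoke the periodicity criterion established above: since the center of $SO^+(1,n+3)$ is trivial, the member $y_\lambda$ of the associated family is a TE Willmore cylinder with real period $p$ if and only if $\exp(pD(\lambda))=I$, i.e. if and only if the eigenvalues of $D(\lambda)$ are integer multiples of a common purely imaginary number $\tfrac{2\pi i}{p}$. For $S^3$ we have $n=1$, so for $\lambda\in S^1$ the matrix $D(\lambda)$ lies in $\mathfrak{so}(1,4)$ and its eigenvalues have the shape $0,\pm\mu_1(\lambda),\pm\mu_2(\lambda)$, with characteristic polynomial $\chi_\lambda(x)=x\bigl(x^4+P(\lambda)x^2+Q(\lambda)\bigr)$, where $P=a+b$ and $Q=ab$ for $a=-\mu_1^2,\ b=-\mu_2^2$. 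In these terms $y_\lambda$ closes up with a real period exactly when $a(\lambda),b(\lambda)>0$ and $\sqrt{a(\lambda)/b(\lambda)}\in\Q$; writing this ratio as $m/l$ in lowest terms one finds the period $p=2\pi\sqrt{(m^2+l^2)/P}$.

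Next I would record the structural features special to the Willmore potential of Proposition \ref{cor-TEW-S3}. Because the target is a symmetric space, $D(-\lambda)$ is conjugate to $D(\lambda)$, so $P$ and $Q$ depend on $\lambda^2$ only. A direct computation with the block form shows that $D_{-1}$ (the $\lambda^{-1}$-part, built from $B_1$) is null and nilpotent: $\operatorname{tr}(D_{-1}^2)=-2\,B_1^tI_{1,3}B_1=0$ and $D_{-1}^4=0$. The first identity, together with the vanishing of the mixed $\mathfrak m\cdot\mathfrak k$ traces, forces $P(\lambda)=-\tfrac12\operatorname{tr}(D(\lambda)^2)$ to be \emph{independent} of $\lambda$; the second kills the $\lambda^{\pm4}$-terms of $Q$. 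Hence $Q(\lambda)=Q_0+2\operatorname{Re}(q\lambda^{-2})$ is a trigonometric polynomial of degree two in $\arg\lambda$, while $P>0$ stays fixed. Since $y=y_1$ is already a cylinder, $a_0,b_0>0$ and $0<Q_0\le P^2/4$; these being open conditions, $a(\lambda),b(\lambda)$ stay positive and real on an interval of $\lambda=e^{i\phi}$ about $\phi=0$, so both $\mu_j$ remain purely imaginary there.

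The conclusion is then immediate provided $Q$ is non-constant. With $P$ fixed and $Q$ sweeping a nondegenerate subinterval of $(0,P^2/4]$, the quantity $\sqrt{a/b}=\sqrt{(P+\sqrt{P^2-4Q})/(P-\sqrt{P^2-4Q})}$ is continuous and strictly monotone in $Q$, hence ranges over a whole interval in $[1,\infty)$. That interval contains infinitely many rationals $m/l$; for each of them $y_\lambda$ is a TE Willmore cylinder with period $2\pi\sqrt{(m^2+l^2)/P}$, and since distinct coprime pairs $(m,l)$ yield infinitely many distinct values of $m^2+l^2$, these cylinders have infinitely many distinct real periods. (For the values with $m\ne l$ the five eigenvalues $0,\pm i\sqrt a,\pm i\sqrt b$ are distinct, so $D(\lambda)$ is semisimple and the criterion $\exp(pD(\lambda))=I$ applies.)

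The hard part will be the non-constancy of $Q(\lambda)$, equivalently $q\ne0$. The coefficient $q$ is the $\lambda^{-2}$-part of the invariant $\mu_1^2\mu_2^2$, a quadratic expression in the entries of $D_{-1}$ with coefficients coming from $D_0$; its nonvanishing is precisely the statement that the Hopf differential $k$ does not vanish identically, i.e. that the surface is not totally umbilic. I would verify $q\ne0$ by the explicit $4\times4$–minor computation from the block form in Proposition \ref{cor-TEW-S3}; the preceding explicit Proposition already confirms it in the normalized case, where $Q(\lambda)=2-2\cos(2\theta-2\arg\lambda)$ is manifestly non-constant. If $k$ happens to vanish at the chosen base point, one first translates the base point to a non-umbilic point, which is always possible for a genuine (non-round) Willmore cylinder, and then runs the argument above.
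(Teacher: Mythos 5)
Your structural analysis is correct and goes well beyond what the paper records: the paper only computes the eigenvalues for the normalized two--parameter family $k=e^{i\theta}/\sqrt{2}$, $\beta=0$, $s=2ci$ in the Proposition preceding the Corollary and then asserts the general statement, whereas you isolate the two spectral invariants and show that $P=-\tfrac12\operatorname{tr}(D(\lambda)^2)$ is $\lambda$--independent (indeed $B_1^tI_{1,3}B_1=0$, so $\operatorname{tr}(D_{-1}^2)=0$) and that $Q(\lambda)=Q_0+2\Re(q\lambda^{-2})$. Your reduction of the closing condition to $\sqrt{a/b}\in\Q$ with period $2\pi\sqrt{(m^2+l^2)/P}$ also matches the paper's explicit example ($P=4$, $p=\pi\sqrt{m^2+l^2}$).

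The gap is exactly at the step you defer: the claim that $q\neq 0$ is equivalent to non--umbilicity at the base point is false, and without $q\neq0$ the argument produces no new periods at all. Carrying out the computation you postpone: since $D_{-1}^{3}=0$ and $(D_{-1}D_0)^2=0$ (for $n=1$ the block $\tilde A_2$ vanishes), one gets $q=-\tfrac14[\lambda^{-2}]\operatorname{tr}(D(\lambda)^4)=b^tI_{1,3}\tilde A_1^{\,2}b$ with $b=(\sqrt2\beta_1,-\sqrt2\beta_1,-k_1,-ik_1)^t$, and evaluating this quadratic form gives, up to a sign convention for $s$, $q=-4\beta_1^2-2k_1^2\bigl(\bar s+2|k_1|^2\bigr)$ at the base point. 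This vanishes for instance when $\beta_1=0$ and $s=-2|k_1|^2$ with $k_1\neq0$, and is nonzero when $k_1=0$ but $\beta_1\neq0$; so $q\neq0$ is not the statement that the Hopf differential is nonzero. Moreover your proposed rescue --- translating the base point to a non--umbilic point --- cannot work: rebasing at $z_0=iv_0$ replaces $D(\lambda)$ by $V(v_0,\lambda)^{-1}D(\lambda)V(v_0,\lambda)$ (equation \eqref{eq-mc1}), so the characteristic polynomial, hence $q$, is independent of the base point; if $q=0$ somewhere it is zero everywhere, the spectrum of $D(\lambda)$ does not move with $\lambda$, and the associated family yields only the original period. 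To close the proof you would need to show that $2\beta_1^2+k_1^2(\bar s+2|k_1|^2)\neq0$ for every full TE Willmore cylinder in $S^3$, or to dispose of the degenerate case separately; neither you nor, it should be said, the paper does this. (A minor further point: $\exp(pD(1))=I$ only gives $a_0,b_0\ge0$, not $a_0,b_0>0$, though your openness argument survives this once $q\neq0$ is secured.)
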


A similar result holds for CMC surfaces \cite{Bu-Ki} and in particular for minimal Lagrangian surfaces in $\C P^2$, \cite{DoMaNL}, Section 6.

%%%%%%%%%%%%%%%%%%%%%%%
\subsection{TE harmonic  cylinders with non--real period}
%%%%%%%%%%%%%%%%%%%%%%%%

Now let's consider the  case of a translational period  $\omega$, where $\omega = \omega_1 + i \omega_2$ is not a real number, i.e. $\omega_2 \neq 0$.

\subsubsection{Some basic facts}

Before investigating this case in more detail we would like to point out

\begin{proposition}
Each TE primitive harmonic map $f: \St \rightarrow G/K$ which has an additional symmetry of the type $t_{\omega} = \{z \rightarrow {z +\omega} \}$ with  some non-real complex number $\omega = \omega_1 + i \omega_2$ is defined on $\C$ and has a quasi-invariant extended frame relative to $t_{\omega} $. Moreover, the Maurer-Cartan form of such a frame is periodic with period $\omega_2$.
\end{proposition}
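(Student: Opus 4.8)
The plan is to exploit that a Delaunay potential $\xi=D(\lambda)\dd z$ has constant coefficients, hence is invariant under every translation $z\mapsto z+c$, in particular under $t_\omega$. First I would record the geometric consequence of the extra symmetry. Combining the translational equivariance $f(z+t)=\exp(tD)f(z)$, valid for real $t$, with the hypothesis $f(z+\omega)=S f(z)$ and then translating by $-\omega_1$, one obtains a purely imaginary symmetry $f(z+i\omega_2)=T f(z)$ with $T=\exp(-\omega_1 D)S\in G$. A short computation comparing $f(z+\omega+t)=\exp(tD)S f(z)$ with $f(z+\omega+t)=S\exp(tD)f(z)$ shows that $\exp(-tD)S^{-1}\exp(tD)S$ fixes every $f(z)$, hence lies in $\mathcal{C}(f)$; since $f$ is full with trivial center, $S$ (and thus $T$) commutes with $D$. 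This commutation is exactly what will make the monodromy in the $v$--direction clean.

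Next I would produce the quasi-invariant frame. Writing the extended frame in the normalized form $F(z,\bar z,\lambda)=\exp(zD(\lambda))\,V(v,\lambda)$ of Theorem \ref{thm-mcform}, the holomorphic frame $C(z,\lambda)=\exp(zD(\lambda))$ satisfies $C(z+\omega,\lambda)=\exp(\omega D(\lambda))C(z,\lambda)$ \emph{exactly}, so the formal $t_\omega$--monodromy of $C$ is the constant loop $\exp(\omega D(\lambda))$. To transfer this to the unitary factor $F$, I would transport the identity $f(z+\omega)=Sf(z)$ from $\lambda=1$ to all $\lambda$ using the reality condition together with the uniqueness of the extended frame attached to a (translation-invariant) potential, and read off from it that: (i) the antiholomorphic part $\alpha_\lambda''$ is $\omega_2$--periodic in $v$, so that the ODE $V^{-1}V_v=-2i\alpha_\lambda''$ of \eqref{eq-mc1}, \eqref{prep2} yields the quasi-periodicity $V(v+\omega_2,\lambda)=V(\omega_2,\lambda)V(v,\lambda)$; and (ii) $V(\omega_2,\lambda)$ commutes with $D(\lambda)$. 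Granting (i)--(ii), a direct computation gives $F(z+\omega,\lambda)=\exp(\omega D(\lambda))\exp(zD(\lambda))V(\omega_2,\lambda)V(v,\lambda)=\chi(\lambda)F(z,\lambda)$ with the constant loop $\chi(\lambda)=\exp(\omega D(\lambda))V(\omega_2,\lambda)\in\Lambda G_\sigma$; thus $F$ is quasi-invariant relative to $t_\omega$.

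The Maurer--Cartan statement is then immediate. From $F(z+\omega,\lambda)=\chi(\lambda)F(z,\lambda)$ with $\chi$ independent of $z$ one gets $\alpha_\lambda(z+\omega)=F(z+\omega,\lambda)^{-1}\dd F(z+\omega,\lambda)=\alpha_\lambda(z)$, and since $\alpha_\lambda$ depends only on $v$ by the TE characterization, while $z\mapsto z+\omega$ shifts $v$ by $\omega_2$, this says precisely that $\alpha_\lambda$ is periodic in $v$ with period $\omega_2$. For the global definition I would then argue that a smooth $\omega_2$--periodic $\alpha_\lambda''$ is bounded on $\R$, so the linear equation $V^{-1}V_v=-2i\alpha_\lambda''$ has a solution defined for \emph{all} $v$, which stays in $\Lambda^+G^{\mathbb{C}}_\sigma$ because it is generated by $\Lambda^+$--valued coefficients; equivalently, the quasi-periodicity $V(v+\omega_2,\lambda)=V(\omega_2,\lambda)V(v,\lambda)$ propagates $V$ from the initial strip to every height (on $(n\omega_2,(n+1)\omega_2)$ it is $V(\omega_2,\lambda)^n$ times its values on $(0,\omega_2)$, all in $\Lambda^+G^{\mathbb{C}}_\sigma$). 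Hence $\exp(zD(\lambda))$ is Iwasawa-decomposable for every $z$, the extended frame $F$, and with it $f$, is defined on all of $\C$.

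The main obstacle is exactly this globalization. Because $G$ is non-compact (e.g.\ $SO^+(1,n+3)$ in the Willmore case) the Iwasawa decomposition is not global, so a priori the immersion lives only on a maximal strip, and the delicate point is to rule out that $V$ leaves $\Lambda^+G^{\mathbb{C}}_\sigma$ at some finite height. The clean resolution is to establish the exact loop-group relations (i)--(ii) \emph{before} discussing the domain, since those relations force the regular set to be invariant under $v\mapsto v+\omega_2$ and yield the periodicity that prevents any blow-up. Proving (i)--(ii) rigorously, namely transferring the symmetry from $\lambda=1$ to all $\lambda$ via reality and the uniqueness of the extended frame of a constant potential, and in particular verifying that the gauge factor collapses to the single constant loop $V(\omega_2,\lambda)$ commuting with $D(\lambda)$, is where the real work lies; everything else (quasi-invariance, $\omega_2$--periodicity of $\alpha_\lambda$, and definition on $\C$) then follows formally and in that order.
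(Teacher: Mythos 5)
Your strategy differs from the paper's: you work with the factor $V(v,\lambda)$ of the decomposition $F=\exp(zD(\lambda))V(v,\lambda)$ and with commutation relations forced by fullness, whereas the paper works directly with a quasi-invariant frame and a cocycle-splitting argument. But your argument has a genuine gap precisely at the step you yourself flag as ``where the real work lies'', and that step is the entire content of the proposition. The symmetry $f(z+\omega)=Sf(z)$ gives for the frame only a relation of the form $F(z+i\omega_2,\lambda)=\chi(\lambda)F(z,\lambda)\,k(\omega_2,y)$ with a $K$-valued gauge factor $k$; consequently $\alpha_\lambda$ is a priori only $\omega_2$-periodic \emph{up to gauge}, i.e.\ $\alpha_\lambda(v+\omega_2)=k^{-1}\alpha_\lambda(v)k+k^{-1}\dd k$. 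Your step (i), the genuine $\omega_2$-periodicity of $\alpha''_\lambda$, is essentially equivalent to the second assertion of the proposition, so it cannot simply be ``read off'' from the symmetry together with uniqueness of the extended frame; one must first show that the gauge factor can be normalized away. Note also that Theorem \ref{cocycle-plus} does not apply here: it splits cocycles for the continuous group of all real translations (by evaluating $h$ at $t=x$), whereas the relevant group is now the discrete group $\omega_2\mathbb{Z}$ acting in the imaginary direction.

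The paper closes exactly this gap: $k(\omega_2,y)$ is a cocycle for the $\omega_2\mathbb{Z}$-action, and since primitive harmonic maps are real analytic, $k$ extends to a holomorphic cocycle on a complex strip containing the $y$-axis and therefore splits, $k(\omega_2,y)=h(y)h(y+\omega_2)^{-1}$; replacing $F$ by $Fh$ yields the quasi-invariant frame, and the $\omega_2$-periodicity of the Maurer--Cartan form is then immediate. Once that is in place, your remaining steps (the quasi-periodicity $V(v+\omega_2,\lambda)=V(\omega_2,\lambda)V(v,\lambda)$, the formula for $\chi(\lambda)$, and the propagation of Iwasawa decomposability to all of $\C$) go through, and your observation that fullness forces the monodromy to commute with $D(\lambda)$ is correct but is not needed for this proposition --- it is the key point of Theorem \ref{secondperiod}.
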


\begin{proof}
Let's start from some quasi-invariant frame for the TE primitive harmonic map, see
Theorem \ref{thm-mcform}.
Now the fact that $f$ is invariant under $t_\omega$ implies that we have in addition to the relation $F(x, y, \lambda ) = e^{xD(\lambda)} F(0,y,\lambda)$  also the relation
 $F(z + i \omega_2, \lambda) = \chi(\lambda) F(z,\lambda) k(\omega_2,y)$ with
 $ \chi(\lambda) \in G$ for all $\lambda \in S^1$. {Of course, $f$ is defined on $\C$.
 Moreover, the} last factor is a cocycle relative to the action of the
group $\omega_2\mathbb{Z}$ on the strip $\St$. Since the real analytic cocycle
extends to a holomorphic cocycle on a complex strip including the $y-$axis, it does split, i.e. $k$ can be written in the form
$ k(\omega_2,y) = h(y) h(y+\omega_2)^{-1},$ and for $\hat{F} = F h$ the first claim follows.
The second claim is an immediate consequence of the first one.
\end{proof}

\begin{corollary} \label{period-alpha}
Let $\alpha = F^{-1} \dd F$ be the Maurer-Cartan form of a quasi-invariant frame as in the last Proposition. Then the set of purely imaginary periods of $\alpha$ is a lattice of the form
$p_2 i \Z$  with some real number $p_2 >0$ and $\omega_2$ is of the form $\omega_2 = m p_2$ with an integer $m$ which we can assume w.l.g. to be positive.
\end{corollary}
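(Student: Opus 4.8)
The plan is to translate the statement about periods of $\alpha$ into an elementary statement about subgroups of $(\R,+)$ and then apply the standard classification of such subgroups together with real analyticity. First I would record that, by the earlier theorem characterizing the TE property (namely that $f$ is TE iff $F^{-1}\dd F$ depends only on $y=\mathrm{Im}(z)$), the form $\alpha=F^{-1}\dd F$ depends only on $y$; writing $\alpha=A(y)\,\dd x+B(y)\,\dd y$, a purely imaginary number $iq$ with $q\in\R$ is a period of $\alpha$ exactly when the translation $z\mapsto z+iq$ leaves $\alpha$ invariant, i.e. when $A(y+q)=A(y)$ and $B(y+q)=B(y)$ for all $y$. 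Denote by $P\subset\R$ the set of such $q$. Then $P$ is a subgroup of $(\R,+)$, and by the preceding Proposition we have $\omega_2\in P$ with $\omega_2\neq 0$, so $P\neq\{0\}$.

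Next I would use real analyticity (primitive harmonic maps, and hence $\alpha$, are real analytic) to establish the dichotomy that $P$ is either discrete or all of $\R$. Fix any matrix entry $a(y)$ of $A(y)$ or $B(y)$ and, for fixed $y$, consider the real-analytic function $q\mapsto a(y+q)-a(y)$, which vanishes on $P$. If $P$ accumulated at $0$, this function would have a zero set with an accumulation point, hence vanish identically by the identity theorem, giving $a(y+q)=a(y)$ for all $q$; running this over every entry would force $\alpha$ to be constant. Therefore, unless $\alpha\equiv\mathrm{const}$, the group $P$ has no accumulation point at $0$ and is a discrete subgroup of $\R$, so $P=p_2\Z$ for a unique $p_2>0$ (with $p_2>0$ rather than $p_2=0$ because $\omega_2\in P\setminus\{0\}$). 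The degenerate case $\alpha\equiv\mathrm{const}$ is the homogeneous ``vacuum'' situation in which every translation is a period and $F$ is a homomorphism of $\C$ into $G$; this I would set aside as trivial, or exclude it under the standing assumption that the surface is non-degenerate, since it possesses no minimal imaginary period.

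Finally, since $\omega_2\in P=p_2\Z$ there is a unique nonzero integer $m$ with $\omega_2=mp_2$. Replacing $\omega$ by $-\omega$ if necessary, which is again a translational period of $f$ since $f(z+\omega)=f(z)$ forces $f(z-\omega)=f(z)$, I may arrange $\omega_2>0$ and hence $m>0$. This yields that the purely imaginary period set is the lattice $p_2\,i\Z$ and that $\omega_2=mp_2$ with $m\in\Z^{+}$, as claimed.

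The step I expect to be the main obstacle is the clean separation of the discrete case from the full case. One must invoke real analyticity in the correct variable, namely the translation parameter $q$ rather than $y$, in order to preclude periods accumulating at $0$, and one must be honest about the genuinely degenerate constant-$\alpha$ solutions, which do occur and for which the stated lattice form fails; these have to be flagged as trivial or excluded by the non-triviality of the surface. Once that is handled, the remainder is just the classification of subgroups of $\R$ and a sign normalization.
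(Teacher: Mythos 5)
Your argument is correct, and it is worth noting that the paper itself offers no proof of this corollary at all -- it is stated as an unproved consequence of the preceding Proposition -- so there is nothing to compare against except what the authors presumably had in mind, which is surely the same elementary route you take: the purely imaginary periods of $\alpha$ form a closed subgroup of $i\R$ containing $i\omega_2\neq 0$, and a closed subgroup of $\R$ is $\{0\}$, $p_2\Z$, or $\R$. Your use of real analyticity in the translation parameter $q$ (rather than in $y$) to rule out accumulating periods is the right move, though continuity already suffices: a non-discrete subgroup of $\R$ is dense, and density plus continuity of $A(y),B(y)$ forces $\alpha$ to be constant. The one substantive point -- and it is to your credit that you raise it -- is that the constant-$\alpha$ case is not vacuous: it is exactly the homogeneous case, and the paper's own Ejiri torus example in Section 6 has a quasi-invariant frame $F=\exp(u\mathfrak{A}_\lambda+v\mathfrak{B}_\lambda)$ with $[\mathfrak{A}_\lambda,\mathfrak{B}_\lambda]=0$, hence constant Maurer--Cartan form, for which the set of purely imaginary periods is all of $i\R$ and the stated lattice form $p_2 i\Z$ with $p_2>0$ literally fails. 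So the corollary as printed needs either your explicit exclusion of the homogeneous case or a reinterpretation of ``period''; your proof supplies the missing hypothesis honestly rather than papering over it. The remaining steps (periods form a group, $\omega_2=mp_2$, sign normalization by $\omega\mapsto-\omega$) are all fine.
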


%%%%%%%%%%%%%%%%%%%%%%%%%%%
\subsubsection{A loop group theoretic characterization of non-real periods}

Let, again, $\omega$ be a non-real period of the TE primitive harmonic map $f$. Then we have at one hand for the holomorphic extended frame $C(z,\lambda) = e^{zD(\lambda)}$ the relation:
 \begin{equation}\label{eq-omega1}
C(z + \omega, \lambda) =
\exp( \omega D(\lambda)) C(z,\lambda).
\end{equation}
Since in this case the matrix $\exp( \omega D(\lambda))$ is not real, it is not a useful monodromy matrix in our theory (since for symmetries, and, in particular, for elements of some  fundamental group, the
monodromy matrices need to be contained in  $G$).

This apparent contradiction has a simple resolution (for CMC-surfaces in $\R^3$ see e.g. \cite{Do-Ha4}).

\begin{proposition}\label{prop-non-real}
Assume we have a TE harmonic map $f$ generated by $D(\lambda)$ and a complex number
$\omega$. Set $C(z,\lambda)=\exp(zD(\lambda))$ as above.
\begin{enumerate}
\item  If  the translation by $\omega$ is a symmetry of $f$, then there exists
some $\chi(\lambda)\in\Lambda G_{\sigma}$  and some
$V_+: \C \rightarrow \Lambda^+ G^\C_\sigma$,
such that
\begin{equation}\label{eq-chi}
C(z + \omega, \lambda) =
\chi(\lambda) C(z,\lambda) V_+(z,\lambda).\end{equation}
Setting  $b_+(\lambda)=V_+(0,\lambda)=\chi(\lambda)^{-1}\exp(\omega D(\lambda))\in\Lambda^+G^{\C}_{\sigma}$, we have
\begin{equation}\label{eq-b+}
b_+(\lambda) C(z,\lambda) = C(z,\lambda) V_+(z,\lambda).
\end{equation}
\item Conversely, if there
exists some  $b_+(\lambda)\in \Lambda^+G^{\C}_{\sigma}$ satisfying \eqref{eq-b+} such that
\[\chi(\lambda) =\exp(\omega D(\lambda))  b_+(\lambda)^{-1}\] is real, i.e. contained in
$\Lambda G_{\sigma}$, then the translation by $\omega$ is a symmetry of $f$
induced via  \eqref{eq-chi}.
\end{enumerate}

Moreover, if   $\chi(\lambda)_{\lambda=1}$ is contained in the center of $G$, then $f$ is a TE  harmonic cylinder with ``period'' $\omega$.
\end{proposition}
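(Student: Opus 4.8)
The plan is to establish the two directions of the equivalence together with the final ``cylinder'' assertion by exploiting the quasi-invariance relation $C(z,\lambda)=\exp(zD(\lambda))$ and the uniqueness of the Iwasawa (or, here, the twisted Birkhoff-type) splitting into a real loop and a positive loop. For part (1), I would start from the assumption that translation by $\omega$ is a symmetry of the primitive harmonic map $f$. Symmetry at the level of $f$ means $f(z+\omega)=f(z)$ only up to the $K$-action, so on the frame level it produces a relation of the form \eqref{eq-chi} with some $\chi(\lambda)$ that must be \emph{real}, i.e. lie in $\Lambda G_\sigma$, because it implements an isometry of the (possibly indefinite) metric on $G/K$, and with a positive-loop correction factor $V_+(z,\lambda)\in\Lambda^+G^{\mathbb C}_\sigma$ absorbing the $K$-ambiguity in the choice of frame. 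The clean way to see that such a decomposition exists is to observe that both $C(z+\omega,\lambda)$ and $C(z,\lambda)$ solve the \emph{same} differential equation (the Maurer-Cartan form $D(\lambda)\,\dd z$ is translation invariant), so $C(z+\omega,\lambda)C(z,\lambda)^{-1}=\exp(\omega D(\lambda))$ is independent of $z$; this constant loop is then the object to be split, and $\chi(\lambda)$ is precisely its real factor while $b_+(\lambda)$ is its positive factor.

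Once this is set up, I would \emph{define} $b_+(\lambda):=\chi(\lambda)^{-1}\exp(\omega D(\lambda))$, which is manifestly in $\Lambda^+G^{\mathbb C}_\sigma$ by construction, and verify \eqref{eq-b+} by a direct substitution: from \eqref{eq-chi} and $C(z+\omega,\lambda)=\exp(\omega D(\lambda))C(z,\lambda)$ (equation \eqref{eq-omega1}) one gets $\exp(\omega D(\lambda))C(z,\lambda)=\chi(\lambda)C(z,\lambda)V_+(z,\lambda)$, hence $\chi(\lambda)^{-1}\exp(\omega D(\lambda))C(z,\lambda)=C(z,\lambda)V_+(z,\lambda)$, which is exactly \eqref{eq-b+} with $V_+(z,\lambda)$ playing the role of the conjugated positive loop and $V_+(0,\lambda)=b_+(\lambda)$ following by setting $z=0$ and using $C(0,\lambda)=I$.

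For part (2) I would simply run this computation in reverse. Given $b_+(\lambda)\in\Lambda^+G^{\mathbb C}_\sigma$ satisfying \eqref{eq-b+} and such that $\chi(\lambda):=\exp(\omega D(\lambda))b_+(\lambda)^{-1}$ is real, I define $V_+(z,\lambda)$ via \eqref{eq-b+}, namely $V_+(z,\lambda):=C(z,\lambda)^{-1}b_+(\lambda)C(z,\lambda)$, which lies in $\Lambda^+G^{\mathbb C}_\sigma$ by \eqref{eq-b+}, and then check that \eqref{eq-chi} holds by multiplying out: $\chi(\lambda)C(z,\lambda)V_+(z,\lambda)=\exp(\omega D(\lambda))b_+(\lambda)^{-1}b_+(\lambda)C(z,\lambda)=\exp(\omega D(\lambda))C(z,\lambda)=C(z+\omega,\lambda)$. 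Because $\chi(\lambda)$ is real, performing the Iwasawa splitting of $C(z,\lambda)$ and tracking how $\chi(\lambda)$ acts on the real factor $\widetilde F$ shows that $\widetilde F(z+\omega,\bar z+\bar\omega,\lambda)=\chi(\lambda)\widetilde F(z,\bar z,\lambda)k$ for some $k\in K$, so that translation by $\omega$ descends to a genuine symmetry of $f=\widetilde F\bmod K$ at $\lambda=1$.

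For the final ``moreover'' clause, the point is that $\chi(\lambda)_{\lambda=1}$ is the monodromy of the \emph{real} extended frame around the translation by $\omega$: if it lies in the center of $G$, which by hypothesis is contained in $K$, then it acts trivially on $G/K$, so $f(z+\omega)=\chi(1)f(z)=f(z)$ and $f$ genuinely descends to the quotient $\C/\omega\mathbb Z$, i.e. it is a TE harmonic cylinder with period $\omega$. The main obstacle I anticipate is not any single computation but the careful justification that the splitting in \eqref{eq-chi} exists with $\chi(\lambda)$ real and $V_+$ in $\Lambda^+G^{\mathbb C}_\sigma$ simultaneously; this is exactly where the non-compactness of $G$ in the Willmore setting makes the Iwasawa decomposition only local, so one must argue that the constant loop $\exp(\omega D(\lambda))$ admits the desired factorization and that the factor $V_+(z,\lambda)$ stays positive for all $z$, which is the content of the conjugation identity \eqref{eq-b+} rather than an appeal to a global splitting.
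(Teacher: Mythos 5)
Your proposal is correct and follows essentially the same route as the paper: compare $C(z+\omega,\lambda)=\exp(\omega D(\lambda))C(z,\lambda)$ with \eqref{eq-chi}, set $z=0$ to identify $b_+(\lambda)=V_+(0,\lambda)=\chi(\lambda)^{-1}\exp(\omega D(\lambda))$ and hence \eqref{eq-b+}, then reverse the computation for the converse. Your write-up of part (2) (tracking $\chi(\lambda)$ through the Iwasawa splitting of $C$) and of the ``moreover'' clause (the central $\chi(1)\in K$ acts trivially on $G/K$) is in fact somewhat more detailed than the paper's, which essentially asserts these steps.
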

\begin{proof}
(1).
Comparing \eqref{eq-omega1} to \eqref{eq-chi} we derive:
\begin{equation*}
\chi(\lambda)^{-1} \exp( \omega D(\lambda)) C(z,\lambda) = C(z,\lambda) V_+(z,\lambda).\end{equation*}
Setting $z =0$ we obtain
\begin{equation*}
\chi(\lambda)^{-1} \exp(\omega D(\lambda)) = V_+(0,\lambda).
\end{equation*}

Now put  $b_+(\lambda) =  V_+(0,\lambda)$. Then  this yields
$b_+(\lambda) C(z,\lambda) = C(z,\lambda) V_+(z,\lambda)$. Note that this equation states equivalently that $b_+$ is an element of the isotropy group of
$C$ with regard to the dressing action.

(2) Conversely, assume we have a TE harmonic map and we are able to find some $b_+$ such that the last  equation holds and that also  $\chi(\lambda) =\exp(\omega D(\lambda))  b_+(\lambda)^{-1}$ is real. Then the translation by $\omega$ induces a  symmetry \eqref{eq-chi}.

The last statement comes from the fact that if we want to obtain a cylinder for $\lambda =1$, then the real monodromy matrix $\chi(\lambda)$ needs to be in the center of $G$ for $\lambda=1$.
\end{proof}

In order to understand which (non--real) $\omega$'s produce harmonic  cylinders,
we need to understand what $b_+$ there exist with the properties listed above.

\begin{theorem} \label{secondperiod}
Let $f_{\lambda}$ be the associated family of a full, TE primitive harmonic map generated by $D(\lambda)$  and defined on the strip $\St$.
Then each  $f_\lambda$ admits translation by $\omega$ as a symmetry  if and only if
 there exists some $b_+(\lambda)  \in \Lambda^+ G^\C_\sigma$ which commutes with $D(\lambda)$ and such that $\exp(\omega D(\lambda)) b_+(\lambda)^{-1} \in \Lambda G_\sigma.$

If, in addition,
$\chi(\lambda) = \exp(\omega D(\lambda)) b_+(\lambda)^{-1} $ is for $\lambda = \lambda_0$
in the center of $G$,  then $\omega$ is a period of $f_{\lambda_0} = f_{\lambda}|_{\lambda = \lambda_0}$ and $f_{\lambda_0}$ descends to a primitive harmonic map from the cylinder $\C / {\omega \Z}$ to $G/K$.
Moreover, the real translations induce a one-parameter group for which $f_{\lambda_0}$ is equivariant.
\end{theorem}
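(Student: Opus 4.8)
The plan is to prove Theorem \ref{secondperiod} in two directions, exploiting the holomorphic extended frame $C(z,\lambda)=\exp(zD(\lambda))$ together with Proposition \ref{prop-non-real}, which already reduces the question of $\omega$ being a symmetry to the existence of a suitable $b_+(\lambda)\in\Lambda^+G^{\C}_\sigma$. First I would sharpen the isotropy condition \eqref{eq-b+}, namely $b_+(\lambda)C(z,\lambda)=C(z,\lambda)V_+(z,\lambda)$, into the statement that $b_+(\lambda)$ commutes with $D(\lambda)$. The key observation is that $C(z,\lambda)=\exp(zD(\lambda))$, so \eqref{eq-b+} reads $b_+(\lambda)\exp(zD(\lambda))=\exp(zD(\lambda))V_+(z,\lambda)$ for all $z$. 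Differentiating in $z$ at $z=0$ and using $V_+(0,\lambda)=b_+(\lambda)$ gives $b_+(\lambda)D(\lambda)=D(\lambda)b_+(\lambda)+\partial_z V_+(0,\lambda)$; a cleaner route is to note that $V_+(z,\lambda)=\exp(-zD(\lambda))\,b_+(\lambda)\,\exp(zD(\lambda))$, and the requirement that this stay in $\Lambda^+G^{\C}_\sigma$ for all $z$ (not merely formally) forces $V_+$ to be $z$-independent, hence equal to $b_+(\lambda)$, which is precisely commutativity $[b_+(\lambda),D(\lambda)]=0$. Thus $V_+(z,\lambda)\equiv b_+(\lambda)$ and \eqref{eq-chi} collapses to $\chi(\lambda)=\exp(\omega D(\lambda))b_+(\lambda)^{-1}$.

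With this reformulation the equivalence is transparent. For the forward direction, if each $f_\lambda$ admits $\omega$ as a symmetry, Proposition \ref{prop-non-real}(1) produces $b_+(\lambda)=V_+(0,\lambda)$ satisfying \eqref{eq-b+}, and the commutativity argument above upgrades this to $[b_+(\lambda),D(\lambda)]=0$; moreover $\chi(\lambda)=\exp(\omega D(\lambda))b_+(\lambda)^{-1}$ is real by construction, i.e.\ lies in $\Lambda G_\sigma$. Conversely, given $b_+(\lambda)\in\Lambda^+G^{\C}_\sigma$ commuting with $D(\lambda)$ with $\chi(\lambda):=\exp(\omega D(\lambda))b_+(\lambda)^{-1}\in\Lambda G_\sigma$, I would set $V_+(z,\lambda):=\exp(-zD(\lambda))b_+(\lambda)\exp(zD(\lambda))=b_+(\lambda)$ (using commutativity), verify that $C(z+\omega,\lambda)=\chi(\lambda)C(z,\lambda)V_+(z,\lambda)$ holds directly by substituting $C(z,\lambda)=\exp(zD(\lambda))$, and invoke Proposition \ref{prop-non-real}(2) to conclude that $\omega$ is a symmetry. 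This handles the ``if and only if.''

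For the final clause, suppose additionally that $\chi(\lambda_0)$ lies in the center of $G$. By the last statement of Proposition \ref{prop-non-real}, $f_{\lambda_0}$ is then a TE harmonic cylinder with period $\omega$. To make the descent explicit I would check that the extended frame, obtained from $C(z,\lambda)$ via Iwasawa decomposition, satisfies $F(z+\omega,\bar z+\bar\omega,\lambda_0)=\chi(\lambda_0)F(z,\bar z,\lambda_0)$ on the nose: since $\chi(\lambda_0)$ is central it acts trivially on $G/K$, so $\tilde f_{\lambda_0}(z+\omega)=\tilde f_{\lambda_0}(z)$ and the map factors through $\C/\omega\Z$. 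Finally, the one-parameter group of real translations $z\mapsto z+t$ commutes with the translation by $\omega$, so it descends to the cylinder and acts there; the quasi-invariance $F(z+t,\cdot,\lambda_0)=\exp(tD(\lambda_0))F(z,\cdot,\lambda_0)$ from \eqref{equi-F-loop-1} persists on the quotient, exhibiting $f_{\lambda_0}$ as equivariant under this induced group.

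I expect the main obstacle to be the rigorous justification that the isotropy relation \eqref{eq-b+} forces genuine commutativity $[b_+(\lambda),D(\lambda)]=0$ rather than just a formal or pointwise-in-$z$ identity. The delicate point is that $V_+(z,\lambda)=\exp(-zD(\lambda))b_+(\lambda)\exp(zD(\lambda))$ must remain inside the \emph{positive} loop group $\Lambda^+G^{\C}_\sigma$ for all $z$ in a neighborhood; conjugating a fixed positive loop by $\exp(zD(\lambda))$, whose expansion contains the negative power $\lambda^{-1}D_{-1}$, will in general introduce negative powers of $\lambda$ unless $b_+$ commutes with the negative part $D_{-1}$ (and, by the same token, with $D_0$ and $D_1$). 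Making this argument clean requires tracking the $\lambda$-grading carefully and using the uniqueness of Birkhoff/Iwasawa-type factorizations to rule out $z$-dependence; this is where the real content of the theorem, beyond a formal manipulation, resides.
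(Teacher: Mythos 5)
Your overall architecture (reduce everything to Proposition \ref{prop-non-real}, then show that the isotropy element $b_+$ must commute with $D(\lambda)$) matches the paper's, and your treatment of the converse direction and of the final descent-to-a-cylinder clause is fine. But the forward direction hinges on the one claim you yourself flag as the ``main obstacle,'' and that claim is exactly what you do not prove: you assert that $V_+(z,\lambda)=\exp(-zD(\lambda))\,b_+(\lambda)\,\exp(zD(\lambda))$ remaining in $\Lambda^+G^{\C}_\sigma$ for all $z$ forces $V_+$ to be $z$-independent. Differentiating that condition only yields that the $\lambda^{-1}$-coefficient of $V_+^{-1}DV_+-D$ vanishes, i.e.\ that the leading ($\lambda^0$) coefficient of $V_+(z,\lambda)$ commutes with $D_{-1}$ for every $z$; this does not by itself give $[b_+(\lambda),D(\lambda)]=0$, and excluding a genuinely $z$-dependent $V_+$ by ``tracking the $\lambda$-grading'' is not a routine bookkeeping exercise. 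As written, the ``only if'' direction is incomplete, and it is not even clear the purely loop-group-theoretic statement you need is true without further hypotheses.

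The paper closes this gap by a completely different and much softer mechanism, which is why \emph{fullness} appears in the hypotheses --- your proposal never uses it. Since the real translations and the translation by $\omega$ generate an abelian group of symmetries of $f_\lambda$, one has $\chi(\lambda)\exp(tD(\lambda))f=f(z+t+\omega,\lambda)=\exp(tD(\lambda))\chi(\lambda)f$, and fullness (i.e.\ $\mathcal{C}(f)=$ center) upgrades this to the matrix identity $\chi(\lambda)\exp(tD(\lambda))=\exp(tD(\lambda))\chi(\lambda)$. Differentiating at $t=0$ gives $[D(\lambda),\chi(\lambda)]=0$, hence $[D(\lambda),b_+(\lambda)]=0$ because $b_+(\lambda)=\chi(\lambda)^{-1}\exp(\omega D(\lambda))$. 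You should either adopt this argument or supply an actual proof of your isotropy claim; in its current form that step is a conjecture, not a proof.
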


\begin{remark}\ \begin{enumerate}
\item Writing \[\chi(\lambda) = \exp(\omega D(\lambda)) b_+(\lambda)^{-1}
= \exp(\omega _1 D(\lambda))  \left (\exp(\omega_2i D(\lambda)) b_+(\lambda)^{-1} \right)\]
shows that  $\chi(\lambda)$ is a product of two real factors. Let's assume that $\omega$ is neither real nor purely imaginary. If both factors are in the center of $G$ for $\lambda = \lambda_0$, then one obtains a harmonic torus with rectangular fundamental parallelogram. If neither factor is in the center of $G$, then the map $f$ only closes after translation  by $\omega$.
\item For the case of Willmore surfaces, in both cases
above, by  Theorem \ref{lemma-sym},  the Willmore surface $y$ has the same symmetry as $f$, which is different from the case for equivariant CMC surfaces considered by Burstall and Kilian in Section 7 of \cite{Bu-Ki}.
    \end{enumerate}
\end{remark}

\begin{proof}
By Proposition \ref{prop-non-real}, we need only to show that $b_+$ commutes with $D(\lambda)$, or equivalently,  $\chi(\lambda)$ commutes with $D(\lambda)$.
By our assumptions, the translations by arbitrary real $t$ and by integer multiples of $\omega$ induce symmetries of the TE harmonic map $f_{\lambda}$. That is, we have
\[f(z+t,\lambda)=\exp(tD(\lambda))f(z,\lambda) ~\hbox{ and }\ f(z+\omega,\lambda)=\chi(\lambda)f(z,\lambda).\]
Therefore,  the real matrices $\exp(tD(\lambda)) $ and $\chi(\lambda)$ are contained in the image of the monodromy representation of the symmetry group of $f$.
Since all translations in $\C$ commute, we obtain
\[\chi(\lambda)\exp(tD(\lambda))f(z,\lambda)=f(z+t+\omega,\lambda)=\exp(tD(\lambda))\chi(\lambda)f(z,\lambda).\]
Since $f$ is full, we see that $\chi(\lambda)\exp(tD(\lambda))=\exp(tD(\lambda))\chi(\lambda)$.
Differentiating this relation for $t$ at $t=0$, yields $[ D(\lambda), \chi (\lambda)] =0.$
\end{proof}

\begin{remark}
There are clearly many questions concerning TE primitive harmonic maps (and their associated surfaces) with non-real translational  symmetry or TE tori. We plan to pursue these issues in a subsequent publication.
\end{remark}

%%%%%%%%%%%%%%%%%%

\subsubsection{An example for Willmore surfaces with non-real translational  symmetry}

{The simplest examples are the homogeneous Willmore tori, among which the Ejiri torus \cite{Ejiri1982} is of  particular interest:}
 \[y= \left(\cos  u \cos \frac{v}{\sqrt{3}}, \cos u \sin \frac{v}{\sqrt{3}}, \sin  u \cos \frac{v}{\sqrt{3}}, \sin u \sin \frac{v}{\sqrt{3}}, \sqrt{2}\cos \frac{v}{\sqrt{3}}, \sqrt{2}\sin \frac{v}{\sqrt{3}}\right)^t.\]
  It is straightforward to compute
 \begin{equation}
\alpha_{\lambda}'=\left(
                              \begin{array}{ccccccc}
                                0 & 0 & s_1 & s_2 & 0& 0 & \lambda^{-1}\sqrt{2}\beta_3 \\
                                0 & 0 & s_3 & s_4 & 0& 0 & -\lambda^{-1}\sqrt{2}\beta_3 \\
                               s_1& -s_3 & 0 & 0 & -\lambda^{-1}k_1 & -\lambda^{-1}k_2 & 0\\
                               s_2& -s_4 & 0 & 0 & -\lambda^{-1}ik_1 & -\lambda^{-1}ik_2 & 0\\
                               0& 0 & \lambda^{-1}k_1 & \lambda^{-1}ik_1 &0 & 0 & -a_{13}\\
                               0& 0& \lambda^{-1}k_2 & \lambda^{-1}ik_2 & 0 & 0 & -a_{23}\\
                               \lambda^{-1}\sqrt{2}\beta_3 & \lambda^{-1}\sqrt{2}\beta_3 & 0 & 0 &a_{13}  & a_{23} & 0\\
                              \end{array}
                            \right)\dd z,
\end{equation}
with
\[k_1=\frac{\sqrt{6}}{12},k_2=\frac{-i\sqrt{3}}{6}, s=\frac{1}{6}, \beta_3=\frac{-i\sqrt{2}}{24}, a_{13}=\frac{-i\sqrt{3}}{6}, a_{23}=\frac{\sqrt{6}}{6},\]
and
\[s_1=\frac{7\sqrt{2}}{48},\ s_2=\frac{-11i\sqrt{2}}{48},\ s_3=\frac{17\sqrt{2}}{24}, s_2=\frac{-13i\sqrt{2}}{48}.\]
So a holomorphic potential of $y$ is
\[\xi=D(\lambda)\dd z \hbox{ with }D(\lambda)=\alpha_{\lambda}'(\partial_z)+\overline{\alpha_{\lambda}'(\partial_ z)}.\]
Set
\[\mathfrak{A}_{\lambda}=\alpha_{\lambda}'(\partial_z)+\overline{\alpha_{\lambda}'(\partial_ z)},\ \mathfrak{B}_{\lambda}=i\left(\alpha_{\lambda}'(\partial_z)-\overline{\alpha_{\lambda}'(\partial_ z)}\right).\]
Note that in this case, $[\mathfrak{A}_{\lambda}, \mathfrak{B}_{\lambda}]=0$. So we have
\[F(z,{\lambda})=\exp(u\mathfrak{A}_{\lambda}+v\mathfrak{B}_{\lambda} ).\]
One computes in this case
$\mathfrak{A}_{\lambda=1}$ is similar to $diag\{0,0,0,i,-i,i,-i\}$ and $\mathfrak{B}_{\lambda}|_{\lambda=1}$  is similar to $diag\{0,\frac{i}{\sqrt{3}},\frac{-i}{\sqrt{3}},\frac{i}{\sqrt{3}},\frac{-i}{\sqrt{3}},\frac{i}{\sqrt{3}},\frac{-i}{\sqrt{3}}\}$, which means that the resulting surface reduces to a torus, as was shown explicitly in  \cite{Ejiri1982}.

In view of Theorem \ref{secondperiod}, since
\[F(z,{\lambda})=\exp(u\mathfrak{A}_{\lambda}+v\mathfrak{B}_{\lambda} )=\exp(zD(\lambda))\exp(-2iv\overline{\alpha_{\lambda}'(\partial_ z)}),\]
we have
\[\chi(\lambda)=\exp(2\sqrt{3}\pi\mathfrak{B}_{\lambda} ), \hbox{ and } b_+(\lambda)=\exp(-4i\sqrt{3}\pi\overline{\alpha_{\lambda}'(\partial_ z)}),\]which illustrates how Theorem \ref{secondperiod} works.
%%%%%%%%%%%%%%%%%%

%%%%%%%%%%%%%%%%%%%%%%%%%%%%%%%%%%%%%%%%%%%

\section{On equivariant Willmore surfaces from Moebius strips  containing the center line into spheres}

In this section we consider equivariant Willmore Moebius strips    in $S^{n+2}$ and provide  an equivalent  description of all equivariant Willmore Moebius strips. As an illustration of our method we discuss a minimal Willmore Moebius strip in $S^3$ which was  introduced by Lawson \cite{Lawson}. We also prove the existence of many Willmore Moebius strips not conformally equivalent to the Lawson examples.

%MMMMMMMMMMMMMMMMMMMMMMMMMMMMMMMMM
\subsection{Equivariant Willmore surfaces of Moebius strips}

%%%%%%%%%%%%%%%%%%%%%%%%%%%%%%%%%

\subsubsection{Equivariant Willmore surfaces  from a  Moebius strip to $S^{n+2}$ }
We first construct a Willmore surface
$\check{y}: \check{M} \rightarrow S^{n+2}$ defined on  the orientable  double cover
$\check{M}=\St$ of a Moebius strip $M$. Let
$y:\St \rightarrow S^{n+2}$ be a translationally equivariant Willmore surface with its extended frame $F(z,\bar z, \lambda)$ satisfying $F(0,0,\lambda)=I$ and a holomorphic potential
\begin{equation}\label{eq-pot-equ}
\xi = D(\lambda) \dd z, \hspace{2mm} \mbox{where} \hspace{2mm}
D(\lambda) = \lambda^{-1} D_{-1} + D_0 + \lambda D_{1}
\hspace{2mm} \mbox{and where} \hspace{2mm} D_1 = \overline{D_{-1} }
\hspace{2mm} \mbox{and } \hspace{2mm}   D_0 = \overline{D_0} .
\end{equation}
Here $D(\lambda)=\alpha'(0)+\alpha''(0)$ is constant with $\alpha$ being of the form stated in Section 4 (See also \cite{DoWa1}, Proposition 2.2). And the solution \begin{equation}
C(z,\lambda) = \exp{z D(\lambda)}
\end{equation}
to the ODE $\dd C = C \xi,\ C(0,\lambda) = I,$ for all $ \lambda \in S^1$, gives a holomorphic frame of $y$. Moreover, there exists some $t_0\in\R^+$ such that  $y(z+t_0)=y(z)$ for all $z\in \C$ (i.e. $y$ is a Willmore cylinder) if and only if $\exp(t_0D(\lambda))|_{\lambda=1}=I.$

We consider  the transformation
\begin{equation}
\mu(z) = \bar{z} + \pi.
\end{equation}
and restrict our attention to strips $\mathbb{S}$ containing the real axis which are invariant under $\mu$.
The group $ \Gamma = \Gamma_\M = \pi_1 (M)$ generated by $\mu$ satisfies
\begin{equation}\label{eq-gamma-1}
 \Gamma = \Gamma_0 \cup \mu \Gamma_0 \hspace{2mm} \mbox{where }
\hspace{2mm}  \Gamma_0  = \Gamma_{\M,0}= \lbrace  z \rightarrow {z + 2n\pi} \rbrace, \hspace{2mm} n \in \mathbb{Z}.
\end{equation}
Since no element of $\Gamma$ has any fixed point, except the identity transformation,
$\check{M} = \Gamma_0 /\C$ is a smooth real surface, actually
 a  cylinder,  the orientable double cover of the Moebius strip  $M  = \Gamma /\St $  to be constructed.

Let's assume now that $\mu$ induces a symmetry of the translationally equivariant Willmore surface
 $y: \St \rightarrow S^{n+2},$ generated by $D(\lambda)$.
Then from the definition $C(z,\lambda) = \exp(zD(\lambda))$ we obtain
\begin{equation}\label{eq-C-equ}
C(\mu(z),\lambda) = \exp ( \pi D(\lambda)) C(\bar{z},\lambda).
\end{equation}
Since $\mu \circ \mu (z) = z+2 \pi$,
we have moreover
\begin{equation}
C(\gamma_n(z),\lambda) = \exp (2n\pi D(\lambda)) C(z,\lambda),
\end{equation}
if $\gamma_n (z) = z + 2n\pi \in \Gamma_0.$

\begin{theorem}\label{thm-equ-n-ori}\
\begin{enumerate}
\item Let $y:\St \rightarrow S^{n+2}$ be a translationally equivariant  Willmore surface with a symmetry $(\mu,S)$ i.e. satisfying
$ Y\circ \mu= \hat S Y$ for some $\hat S\in SO^+(1,n+3)$. Here $Y$ is the canonical lift of $y$ w.r.t. $z$.
 Let $\xi=D(\lambda)\dd z$ be  the Delaunay  potential \eqref{eq-pot-equ} associated with $y$ as stated above.  Then
    \begin{enumerate}
\item The frame $C(z,\lambda)$ satisfies
\begin{equation} \label{3}
C(\mu(z),\lambda))= \chi (\lambda) \cdot \overline{C(z,\lambda^{-1})} \cdot W_+( \bar{z},\lambda)~~\hbox{ for all $z \in \C$}.
\end{equation}Here
\begin{equation} \label{4}
\begin{split}
W_+ (\bar{z}, \lambda) &= W_+( \bar{z}, \lambda = 1)\\
& = W_0(\bar{z})\in SO^+(1,n+3,\C) \cap (O^+(1,3,\C)\times O(n,\C)),
\end{split}
\end{equation}with  $W_0(u) \in SO^+(1,n+3) \cap (O^+(1,3)\times O(n))$  for $u \in \R.$ Moreover,
the monodromy $\chi(\lambda)$ of $\mu$ satisfies
\begin{equation} \label{5}
\chi (\lambda) =
\exp (\pi D(\lambda)) \cdot  W_0 (0)^{-1}, ~ \chi(\lambda)|_{\lambda=1}=\hat S.
\end{equation}
\item The potential $D(\lambda)$ satisfies
\begin{equation} \label{eq-D-equ}
\dd W_0(\bar{z})=W_0(\bar{z})D(\lambda) -D(\lambda^{-1})  W_0(\bar{z}),
 \hspace{2mm} \mbox{and } \hspace{2mm}
 W_0(0) D(\lambda) = D(\lambda) W_0(0),
\end{equation}
from which we obtain
\begin{equation} \label{eq-D-equ1}
W_0(\bar{z})=\exp(-\bar{z}D(\lambda^{-1}))W_0(0)\exp(\bar{z}D(\lambda))~~ \hbox{ with }~~~ W_0(0)D_{-1}=D_1W_0(0).
\end{equation}
 \end{enumerate}

\item Conversely, if we have some potential $\xi = D(\lambda) \dd z$
 in the form \cite{DoWa1}, Proposition 2.2. which satisfies \eqref{eq-D-equ1} for some $W_0(\bar{z})$, where $W_0(u) \in SO^+(1,n+3) \cap (O^+(1,3)\times O(n))$ for all $u \in \R$, then the Willmore surface $y_\lambda$ defined on some open strip $\St$ containing the real axis
 and associated with $\xi$
admits the transformation $(\mu, \chi (\lambda)) $ as a symmetry, where $\chi(\lambda)=\exp (\pi D(\lambda)) \cdot  W_0 (0)^{-1}$.
\end{enumerate}
\end{theorem}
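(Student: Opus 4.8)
The plan is to treat the two implications of the theorem separately, using throughout the explicit holomorphic frame $C(z,\lambda)=\exp(zD(\lambda))$ together with the fact, from Theorem \ref{thm-mcform}, that for a TE primitive harmonic map the extended frame is recovered from $C$ by Iwasawa splitting. The essential new feature compared with the purely translational (cylinder) case of Section \ref{ss-cylinder} is that $\mu(z)=\bar z+\pi$ is \emph{orientation reversing}. Consequently the transformation it induces on the extended frame must carry the loop parameter $\lambda$ to $\lambda^{-1}$ and simultaneously conjugate the matrix entries; keeping track of this twist is exactly what produces the factor $\overline{C(z,\lambda^{-1})}$ in \eqref{3}.

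For the direct statement (1) I would first descend the geometric symmetry $Y\circ\mu=\hat S Y$ to the conformal Gauss map and its frame $F$ of \eqref{eq-F}, obtaining a frame identity at $\lambda=1$ with left factor $\hat S$ and a right gauge in the isotropy. Because $\mu$ is anti--holomorphic it interchanges $\dd z$ and $\dd\bar z$ and hence, in the $\lambda$--graded Maurer--Cartan form \eqref{form1}, interchanges the roles of $\lambda^{-1}\alpha_{\mathfrak m}'$ and $\lambda\alpha_{\mathfrak m}''$; since the underlying data are real ($\alpha''=\overline{\alpha'}$), this is precisely the substitution $\lambda\mapsto\lambda^{-1}$ followed by complex conjugation. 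Thus $F(\mu(z),\overline{\mu(z)},\lambda)$ and $\overline{F(z,\bar z,\lambda^{-1})}$ carry, up to a left constant factor, the same Maurer--Cartan form, and by the rigidity (uniqueness of solutions of the frame ODE) underlying the loop group formalism they differ by a left constant $\chi(\lambda)\in\Lambda G_\sigma$ and a right gauge $W_+(\bar z,\lambda)$ valued in the complexified isotropy. Transporting this to $C=\exp(zD(\lambda))$ gives \eqref{3}; evaluating \eqref{3} at $z=0$, where $C(0,\lambda)=I$ and hence $W_+(0,\lambda)=W_0(0)$, yields at once $\chi(\lambda)=\exp(\pi D(\lambda))W_0(0)^{-1}$ and $\chi(\lambda)|_{\lambda=1}=\hat S$, i.e. \eqref{5}.

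The two structural claims of \eqref{4} — that $W_+$ is $\lambda$--independent and lands in $SO^+(1,n+3)\cap(O^+(1,3)\times O(n))$ for real argument — I would then establish as follows. The gauge $W_+$ relates two extended frames of one and the same harmonic map, so it is forced into (the complexification of) the isotropy $K$; since $\mu$ reverses the orientation of both the tangent pair $e_1,e_2$ and the normal frame $\psi_j$ in \eqref{eq-F}, this factor lies in the full $O^+(1,3)\times O(n)$ rather than its identity component, whence $W_0(u)\in SO^+(1,n+3)\cap(O^+(1,3)\times O(n))$ for $u\in\R$. Its $\lambda$--independence is special to the Delaunay situation: since every frame in sight is an exponential of the single element $D(\lambda)$, the factor $W_+(\bar z,\lambda)=\bigl(\chi(\lambda)\,\overline{C(z,\lambda^{-1})}\bigr)^{-1}C(\mu(z),\lambda)$ reduces, after substituting \eqref{5}, to a product of exponentials of $D(\lambda)$ and of $W_0(0)$ whose $\lambda$--dependence cancels; combined with $W_+(0,\lambda)=W_0(0)$ being $\lambda$--independent this gives $W_+(\bar z,\lambda)=W_0(\bar z):=W_+(\bar z,1)$. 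Differentiating \eqref{3} in $\bar z$ and reading off the coefficient yields the first relation of \eqref{eq-D-equ}, evaluating at $z=0$ and comparing powers of $\lambda$ gives the commutation relations $W_0(0)D_{-1}=D_1W_0(0)$, and integrating the resulting linear ODE produces \eqref{eq-D-equ1}. For the converse (2) I would simply run this backwards: given $\xi=D(\lambda)\dd z$ of the prescribed form and $W_0$ satisfying \eqref{eq-D-equ1} with the stated reality, set $\chi(\lambda)=\exp(\pi D(\lambda))W_0(0)^{-1}$; a direct computation with $C=\exp(zD(\lambda))$ and \eqref{eq-D-equ1} verifies \eqref{3}, so after Iwasawa splitting and projection modulo $K$ the map $\mu$ intertwines $y_\lambda$ with $\chi(\lambda)$, while $D_1=\overline{D_{-1}}$, $D_0=\overline{D_0}$ and $W_0(u)\in SO^+(1,n+3)$ for real $u$ force $\chi(\lambda)\in\Lambda G_\sigma$, with $\hat S=\chi(1)$.

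The hardest point is the middle step of part (1): establishing the \emph{precise} twisted intertwining relation \eqref{3} — in particular that the orientation--reversing $\mu$ produces exactly $\lambda\mapsto\lambda^{-1}$ together with conjugation, and that the residual gauge $W_+$ is simultaneously isotropy--valued and independent of $\lambda$. This rests on the rigidity of extended frames and on careful bookkeeping of how $\mu^{\ast}$ acts on the graded form \eqref{form1}, using the reality of the Willmore data. Once \eqref{3} is secured, the remaining assertions \eqref{4}, \eqref{5}, \eqref{eq-D-equ} and \eqref{eq-D-equ1} follow by evaluation, differentiation and integration, and the converse is a direct verification.
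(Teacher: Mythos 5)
Your overall outline matches the paper's: establish the twisted intertwining relation \eqref{3}, evaluate at $z=0$ to get \eqref{5}, differentiate and integrate to get \eqref{eq-D-equ} and \eqref{eq-D-equ1}, and prove the converse by integrating the gauge-transformed potential identity. (The paper obtains \eqref{3} by citing Theorem 4.4 of \cite{DoWaSym2} rather than re-deriving it; your sketch of that derivation is the right idea in spirit.) However, there is a genuine gap at exactly the point you yourself flag as hardest: your justification of \eqref{4}, i.e.\ that $W_+(\bar z,\lambda)$ is $\lambda$-independent and lands in $SO^+(1,n+3)\cap(O^+(1,3)\times O(n))$ on the real axis, does not work. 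First, the claim that $W_+$ is ``forced into (the complexification of) the isotropy'' because it relates two frames of the same harmonic map is false as stated: two holomorphic frames of one primitive harmonic map differ by a right gauge in $\Lambda^+ G^{\C}_{\sigma}$, not in $K^{\C}$, and cutting $W_+$ down from $\Lambda^+$ to a $\lambda$-independent isotropy factor is precisely the content of \eqref{4}, so it cannot be assumed. Second, your cancellation argument is circular: substituting \eqref{5} gives $W_+(\bar z,\lambda)=\exp(-\bar z D(\lambda^{-1}))\,W_0(0)\,\exp(\bar z D(\lambda))$, and this product is $\lambda$-independent if and only if the intertwining relations $W_0(0)D_{-1}=D_1W_0(0)$ and $W_0(0)D_1=D_{-1}W_0(0)$ hold --- which is part of \eqref{eq-D-equ1}, i.e.\ what is to be proved. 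Third, even your derivation of \eqref{5} already presupposes $W_+(0,\lambda)=W_0(0)$, i.e.\ $\lambda$-independence at $z=0$, which at that stage of your argument is unestablished.

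The missing idea, which is the paper's key step, is a reality argument on the real axis. Restricting \eqref{3} to $z=u\in\R$ gives
\begin{equation*}
W_+(u,\lambda)=\overline{C(u,\lambda^{-1})}^{-1}\,\chi(\lambda)^{-1}\exp(\pi D(\lambda))\,C(u,\lambda),
\end{equation*}
and every factor on the right-hand side is a \emph{real} matrix for each $\lambda\in S^1$: the conditions $D_1=\overline{D_{-1}}$ and $D_0=\overline{D_0}$ make $D(\lambda)$ and $D(\lambda^{-1})$ real-valued on $S^1$, and $\chi$ lies in the real twisted loop group $\Lambda G_{\sigma}$. A loop in $\Lambda^+ G^{\C}_{\sigma}$ that is real for all $\lambda\in S^1$ must have vanishing positive Fourier coefficients, hence is $\lambda$-independent with real constant term; this yields $W_+(u,\lambda)=W_0(u)$ in the stated real group, analytic continuation in $z$ then gives $W_+(\bar z,\lambda)=W_0(\bar z)$ for all $z$, and setting $u=0$ gives \eqref{5}. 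Once this step is inserted, the remainder of your argument --- differentiation of \eqref{3} for \eqref{eq-D-equ}, comparison of $\lambda$-powers for $W_0(0)D_{-1}=D_1W_0(0)$, integration of the linear ODE for \eqref{eq-D-equ1}, and the direct verification in the converse direction, including the reality of $\chi(\lambda)=\exp(\pi D(\lambda))W_0(0)^{-1}$ --- goes through essentially as in the paper.
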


 \begin{proof}
(1). First applying Theorem 4.4 of \cite{DoWaSym2} to the surface $\check{y}$ with symmetry $(\mu,I)$, we obtain that $C$ satisfies \eqref{3}. Comparing to \eqref{eq-C-equ} we derive
\[\exp ( \pi D(\lambda)) C(\bar{z},\lambda)=\chi(\lambda)\overline{C(z,\lambda^{-1})} W_+( \bar{z},\lambda)
\]
for some $\chi(\lambda)$ and $W_+(\bar{z},\lambda)$. Setting $z=u+i0$, we have
\begin{equation}\label{eq-w+u}
W_+(u,\lambda)=\overline{C(u,\lambda^{-1})}^{-1}\chi(\lambda)^{-1}\exp ( \pi D(\lambda)) C(u,\lambda).
\end{equation}
Since all terms on the right side of this equation are real, we infer $W_+(u,\lambda)=W_+(u)\in SO^+(1,n+3,\C)\cap (O(1,3)\times O(n))$. As a consequence, we obtain that
$W_+(\bar{z},\lambda)=W_0(\bar{z})\in SO^+(1,n+3,\C)\cap (O(1,3,\C)\times O(n,\C))$. Setting, furthermore, $u=0$ in \eqref{eq-w+u} we obtain \eqref{5}. Differentiating both sides of   \eqref{3} yields  \eqref{eq-D-equ}.
It is easy to verify that
$W_0(\bar{z})$ in \eqref{eq-D-equ1} also solves \eqref{eq-D-equ} and satisfies the same initial condition at $z=0$. Comparing the $\lambda$--terms in \eqref{eq-D-equ}, we obtain that $ W_0(0)D_{-1}=D_1W_0(0)$.
This finishes the proof of (1).

(2). The equation \eqref{eq-D-equ1} is equivalent to
\[\mu^*\xi(z,\lambda)=W_0(\bar{z})^{-1}\overline{\xi(z,\lambda^{-1})}W_0(\bar{z})+W_0(\bar{z})^{-1}\dd W_0(\bar{z}).\]
  Integrating this equation with initial condition $C(0,\lambda)=I$, we see that there exists some $\chi(\lambda)$ such that
\[C(\mu(z),\lambda))= \chi (\lambda) \cdot \overline{C(z,\lambda^{-1})} \cdot W_0( \bar{z})
\]
holds for all $z \in \C$. Setting $z=0$, we obtain $\exp(\pi D(\lambda))=\chi(\lambda)W_0(0)$. Since $\exp(\pi D( \lambda))\in \Lambda SO(1,n+3)_{\sigma}$, we see that $\chi(\lambda)=\exp(\pi D(\lambda))W_0(0)^{-1}\in \Lambda SO(1,n+3)_{\sigma}$.
\end{proof}

Recall that by definition \ref{full}  a surface $y$ is full if $Fix(f)\subset SO^+(1,n+3)$ is
{in the center of $SO^+(1,n+3)  = \{I\}$, where $f$ is the conformal Gauss map of $y$. Since a Willmore surface $y$ and its oriented conformal Gauss map $f$ are in a $1-1$ correspondence \cite{Ma}, we see that  $Fix(y)\subset SO^+(1,n+3)$ is $\{I\}$.}
\begin{theorem}\label{thm-equ-n-ori-2}\
\begin{enumerate}
\item We retain the assumptions and results of Theorem \ref{thm-equ-n-ori} $(1)$ and assume moreover that
$y$ descends to a Willmore surface of the cylinder $\Gamma_0/\St$ and also to a Willmore immersion from the Moebius strip
 $\Gamma/\St$ (See \eqref{eq-gamma-1} for the definitions of $\Gamma_0$ and $\Gamma$). Then, up to some conformal transformation, we have
\begin{equation}\label{eq-w0}
W_+ (\bar{z}, \lambda) =W_0(\bar{z})= W_0 (0)=\exp( \pi D( \lambda))|_{\lambda=1}=diag(1,1,1,-1,-1,\pm1,\cdots,\pm1)
\end{equation}
where $W_0 (0) \in SO^+(1,n+3)$.
\item  Conversely, consider some potential $\xi = D(\lambda) \dd z$
which induces a Willmore surface $y$ defined on the cylinder  $\Gamma_0/\St$, where $\St$ is some strip. If  $\St=\mu(\St)$
 and if \eqref{eq-D-equ1} and \eqref{eq-w0} are satisfied for $W_0(\bar{z})=W_0 (0) =diag(1,1,1,-1,-1,\pm1,\cdots,\pm1)\in SO^+(1,n+3)$,  then the transformation $\mu$ leaves the Willmore surface $y= y_\lambda|_{\lambda=1}$ invariant and  thus $y$ descends to a Willmore Moebius strip.
\end{enumerate}
\end{theorem}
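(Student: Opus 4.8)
The plan is to prove the two implications separately, using Theorem~\ref{thm-equ-n-ori} as the common starting point. For the direct statement~(1) I would begin from the transformation law~\eqref{3}, the monodromy formula~\eqref{5}, and the expression~\eqref{eq-D-equ1} for $W_0$, all supplied by Theorem~\ref{thm-equ-n-ori}(1), and then bring in the two new hypotheses. The descent of $y$ to the cylinder $\Gamma_0/\St$ says that the real translation by $2\pi$ is a genuine period of $y$; by Theorem~\ref{lemma-sym}, and since a real period forces the monodromy $\exp(2\pi D(\lambda))|_{\lambda=1}$ into the (trivial) center of $SO^+(1,n+3)$, we obtain $\exp(2\pi D(1))=I$. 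The descent of $y$ to the Moebius strip $\Gamma/\St$ says that $\mu$ acts as a deck transformation of the double cover, i.e. $y\circ\mu=y$ as a map into $S^{n+2}$. Since $Y\circ\mu=\hat S\,Y$ gives $y\circ\mu=\hat S\cdot y$, this forces $\hat S$ to fix $y$ pointwise, whence $\hat S\in Fix(y)=\{I\}$ by fullness; thus $\hat S=\chi(1)=I$.

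Granting $\chi(1)=I$, equation~\eqref{5} at $\lambda=1$ yields $W_0(0)=\exp(\pi D(\lambda))|_{\lambda=1}$, the middle equality of~\eqref{eq-w0}. Substituting this into~\eqref{eq-D-equ1} gives
\begin{equation*}
W_0(\bar z)=\exp(-\bar z D(1))\,\exp(\pi D(1))\,\exp(\bar z D(1))=\exp(\pi D(1)),
\end{equation*}
since powers of $D(1)$ commute; hence $W_0$ is constant and $W_+(\bar z,\lambda)=W_0(\bar z)=W_0(0)$. Moreover $W_0(0)^2=\exp(2\pi D(1))=I$, so $W_0(0)$ is an involution in $SO^+(1,n+3)\cap(O^+(1,3)\times O(n))$. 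To pin down its entries I would read off the geometric action of $\mu$ on the adapted frame~\eqref{eq-F}: because $\hat S=I$ the vectors $Y$ and $N$, hence the first two columns, are fixed (entries $1,1$); because $\mu$ is anti-holomorphic it fixes the real-axis tangent direction and reverses the transverse one, so $e_1\mapsto e_1$ and $e_2\mapsto -e_2$ (entries $1,-1$), giving the $O^+(1,3)$-block $\mathrm{diag}(1,1,1,-1)$. The constraint $\det W_0(0)=+1$ then forces the $O(n)$-block to have determinant $-1$, i.e. an odd number of sign reversals on the normal bundle, which an ambient conformal transformation diagonalizes to $\mathrm{diag}(-1,\pm1,\dots,\pm1)$. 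This establishes~\eqref{eq-w0}.

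For the converse~(2) I would integrate. As already observed in the proof of Theorem~\ref{thm-equ-n-ori}(2), the hypothesis~\eqref{eq-D-equ1} is equivalent to $\mu^*\xi=W_0^{-1}\,\overline{\xi(z,\lambda^{-1})}\,W_0+W_0^{-1}\dd W_0$, and integrating with $C(0,\lambda)=I$ produces~\eqref{3} with $\chi(\lambda)=\exp(\pi D(\lambda))\,W_0(0)^{-1}$, showing that $\mu$ is a symmetry of the whole associated family. Invoking~\eqref{eq-w0} now gives $\exp(\pi D(\lambda))|_{\lambda=1}=W_0(0)$, hence $\chi(1)=I$, so at $\lambda=1$ the relation~\eqref{3} reads $C(\mu z,1)=\overline{C(z,1)}\,W_0(0)$. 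Since $W_0(0)=\mathrm{diag}(1,1,1,-1,-1,\pm1,\dots,\pm1)$ fixes the first two frame columns, it fixes the canonical lift, so $Y\circ\mu=Y$ and therefore $y\circ\mu=y$. As $\Gamma$ acts freely on $\St$, the surface $y=y_\lambda|_{\lambda=1}$ descends to a well-defined immersion of the Moebius strip $\Gamma/\St$, which is branch-point-free by Theorem~\ref{nobranchWill}.

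The main obstacle is the direct statement~(1), and within it the clean identification of $W_0(0)$ with the explicit diagonal matrix. The step $\hat S=I$ is delicate: one must argue that descent to the Moebius strip really forces the ambient isometry attached to $\mu$ to act trivially on the point map (rather than merely preserving the surface up to isometry), which is exactly where fullness, in the form $Fix(y)=\{I\}$, is indispensable. The subsequent bookkeeping---translating the loop-group quantity $W_0(0)$ into the action of the anti-holomorphic involution $\mu$ on $(Y,N,e_1,e_2,\psi_1,\dots,\psi_n)$, and in particular nailing the signs $1,1,1,-1$ together with the determinant-forced odd number of reversals among the normals---is the part requiring genuine care. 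By contrast, once $\hat S=I$ and $\exp(2\pi D(1))=I$ are in hand, the constancy of $W_0$ and the involution property $W_0(0)^2=I$ drop out immediately, and the converse~(2) is a routine integration.
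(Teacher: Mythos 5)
Your proposal is correct and follows essentially the same route as the paper's own proof: fullness forces $\chi(\lambda)|_{\lambda=1}=\hat S=I$, whence $W_0(0)=\exp(\pi D(\lambda))|_{\lambda=1}$ and the constancy of $W_0(\bar z)$, while the explicit diagonal form comes from the action of the anti-holomorphic $\mu$ on the adapted frame (the $(1,3)$-block $\mathrm{diag}(1,1,1,-1)$) together with the determinant/parity argument for the $O(n)$-block, and the converse is the same routine integration via Theorem \ref{thm-equ-n-ori}(2).
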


\begin{proof}(1).
First we have for the frame of $y$ the relation (See for example Theorem 3.1. of \cite{DoWaSym2})
\begin{equation} \label{trafoF}
{F(\mu(z), \overline{\mu (z) } , \lambda) = \chi(\lambda) F(z, \bar(z), \lambda^{-1}) k(z,\bar{z}).}
\end{equation}
So we obtain $F\circ\mu|_{\lambda=1}={\chi(\lambda) F|_{\lambda=1} k}
$. As a consequence, we have $y\circ\mu|_{\lambda=1}=\chi(\lambda)y |_{\lambda=1}$. Since $y\circ\mu|_{\lambda=1}=y |_{\lambda=1}$, we have $\chi(\lambda)y |_{\lambda=1}=y |_{\lambda=1}$.
Since $y$ is full, we have  $\chi(\lambda)|_{\lambda=1}=I$.
As a consequence,  we have  $W_0(0)=\exp \pi D(\lambda)|_{\lambda=1}$. So \[W_0(0)D(\lambda)|_{\lambda=1}=D(\lambda)|_{\lambda=1}W_0(0).\]
Since $ W_0(0)D_{-1}=D_1W_0(0)$, we see $ W_0(0)D_{0}=D_0W_0(0)$. As a consequence, \[W_+(\bar{z})=\exp(-\bar{z}D(\lambda^{-1}))W_0(0)\exp(\bar{z}D(\lambda))=W_0(0).\]
Then we obtain the first equality in \eqref{eq-w0}.

For the second one, recall that in this case we have $\mu(z)=\bar z+\pi$,  which shows  that the $k$ in (3.1) of Theorem 3.1 \cite{DoWaSym2} is (by taking derivatives for $\mu$) of the form $k=diag(k_1, k_2)$ with $k_1=diag(1,1,1,-1)$.
Substituting $z=0$  and $\lambda = 1$  into (\ref{trafoF})  we obtain
\[\exp\pi D_{\lambda}|_{\lambda=1}=k(0).\]
Since $\det k_2 = -1$, $k_2(0)^2=I_n$ and $k_2(0)\in O(n)$, we have that $k_2$ has only eigenvalues $\pm1$ and the number of $-1$ is odd. So $k_2(0)$ is conjugate to $\hbox{diag}(-1,\cdots,-1,1\cdots,1)$ with odd number of $-1$. Changing $D_{\lambda}$ if necessary, we can assume that $k_2(0)=\hbox{diag}(-1,\pm1,\cdots,\pm1)$ with $\det k_2(0)=-1$.

(2). Using Case (2) of Theorem \ref{thm-equ-n-ori},  we obtain that
 $\chi(\lambda)|_{\lambda=1}=\exp \pi D_{\lambda}|_{\lambda=1}W_0(0)^{-1}=I$.
\end{proof}

Theorem \ref{thm-equ-n-ori-2} gives a characterization of all equivariant Willmore Moebius surfaces in $S^{n+2}.$
In particular, applying this throem to the holomorphic potentials given in Proposition \ref{cor-TEW-S3}, we can classify  all TE Willmore Moebius strips in $S^3$ as below. Note that this generalizes the examples given by Lawson \cite{Lawson}, as we will prove in Section 6.2.

\begin{theorem}\label{thm-moebius-s3}
Let $y$ be a TE Willmore strip in $S^3$ with a holomorphic potential $\xi=D(\lambda)\dd z$ {of the form stated in } Proposition \ref{cor-TEW-S3}.
Assume, moreover, that $y$ descends to a Willmore surface of the cylinder $\Gamma_0/\St$ for some strip
$\St$.  Then the following statements hold.
\begin{enumerate}
\item
$y$ descends to a Willmore immersion from the Moebius   strip  $\mathbb{M} = \Gamma/ \mathbb S,$    where $\Gamma=\Gamma_0\cup\mu\Gamma_0,$  and $\Gamma$ and $\Gamma_0$  are as defined in \eqref{eq-gamma-1} restricted to $\mathbb S$, if and only if there exist positive integers $m,l\in \mathbb{Z}^+$ with $(m,l)=1$ and $2|m$ such that
\begin{equation}\label{eq-moebius}
s_1=\frac{m^2-l^2}{2},~ s_2=0,~  k_1=-\frac{il}{2} \hbox{ and } \beta_1=-\bar{\beta}_1=\frac{\sqrt{2}}{2}i\beta^*\in\R.
\end{equation}
That is,  the holomorphic potential $D(\lambda)$ of $y$ is of the form
\begin{equation}\label{eq-moebius2}
D(\lambda)=\left(
                                  \begin{array}{ccccc}
                                    0 & 0 & - \frac{m^2-2}{2\sqrt{2}} &0&  \beta_{\lambda} \\
                                    0 & 0 & \frac{m^2+2}{2\sqrt{2}}& 0& -\beta_{\lambda}  \\
                                 -  \frac{m^2-2}{2\sqrt{2}} &-\frac{m^2+2}{2\sqrt{2}} & 0 & 0& \frac{ i(\lambda^{-1}l-\lambda l)}{2} \\
                                   0& 0& 0 & 0& - \frac{\lambda^{-1}l+ \lambda l}{2}\\
\beta_{\lambda}  &\beta_{\lambda}   & -\frac{ i(\lambda^{-1}l-\lambda l)}{2} &  \frac{\lambda^{-1} l+ \lambda l}{2}& 0 \\
                                  \end{array}
                                \right).\end{equation}
                         with $\beta_{\lambda}= i(\lambda^{-1}\beta^*-\lambda \beta^*)$.
\item Assume the conditions stated in $(1)$ are satisfied. Then $y$ is conformally equivalent to some minimal surface in some space form if and only if $\beta^* =0$. In this case, $\mathbb S=\C$ and $y$ is conformally equivalent to one of Lawson's minimal Klein bottles in $S^3$.
\item  There exist infinitely many translationally equivariant Willmore Moebius strips
 in $S^3$ which are not conformally equivalent to any minimal surface in any space form.
\end{enumerate}
\end{theorem}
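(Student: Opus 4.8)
The plan is to obtain the claim as an immediate consequence of parts (1) and (2), the only genuine work being to guarantee that the examples so produced form an infinite, pairwise inequivalent family. By part (1), for every pair of positive integers $(m,l)$ with $(m,l)=1$ and $2\mid m$ and every real number $\beta^{*}$, the potential $D(\lambda)$ of the form \eqref{eq-moebius2} produces a translationally equivariant Willmore surface $y_{m,l,\beta^{*}}$ which descends to a Willmore Moebius strip in $S^{3}$. First I would record that each such surface genuinely exists as an immersion: by the remark following Proposition \ref{cor-TEW-S3}, ``Condition a)'' of Theorem 3.11 of \cite{DoWa10} is satisfied, so the map is a branch-point-free immersion on some open strip $\St$ containing the real axis (Theorem \ref{nobranchWill}), and the descent to $\Gamma/\St$ is precisely what the hypotheses of part (1) encode.

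Next I would produce the infinitude together with the non-minimality. Fix, say, $l=1$ and let $m$ range over the positive even integers; then $(m,1)=1$ and $2\mid m$ hold automatically, so each pair $(m,1)$ is admissible and yields an infinite discrete family. For each such $m$ choose a real $\beta^{*}\neq 0$. By part (2), conformal equivalence of $y_{m,1,\beta^{*}}$ to a minimal surface in a space form occurs \emph{only} when $\beta^{*}=0$; hence for $\beta^{*}\neq 0$ none of these surfaces is conformally equivalent to any minimal surface in any space form.

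The main obstacle is to show that this discrete family contains infinitely many \emph{distinct} surfaces, i.e. that $y_{m,1,\beta^{*}}$ and $y_{m',1,\beta^{*}}$ are not conformally equivalent for $m\neq m'$. The natural tool is a conformal invariant of the underlying Moebius strip, namely its conformal modulus, which is governed by the real period $p$ at which the covering cylinder $\Gamma_{0}/\St$ closes. As in the period computation preceding this theorem, $p$ is determined by the spectrum of $D(\lambda)|_{\lambda=1}$, and for the matrix \eqref{eq-moebius2} the eigenvalues grow with $m$, the dominant contribution coming from the entries $\tfrac{m^{2}\pm 2}{2\sqrt{2}}$. I would make this precise by computing the characteristic polynomial of the $5\times 5$ matrix $D(\lambda)|_{\lambda=1}$ and reading off that the closing period, and hence the conformal modulus, is a strictly monotone function of $m$ along the family; this separates the members and completes the argument. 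This last computation is routine but is the only genuinely technical point, since the non-minimality and the Moebius descent are handed to us by parts (2) and (1) respectively. For completeness I would also remark that, for a fixed admissible $(m,l)$, the one-parameter family $\{y_{m,l,\beta^{*}}\}_{\beta^{*}\in\R}$ passes through a Lawson minimal Klein bottle at $\beta^{*}=0$ and consists of non-minimal Willmore Moebius strips for $\beta^{*}\neq 0$, although establishing distinctness \emph{along} this continuous direction would require a further invariant (for instance the Willmore energy as a non-constant function of $\beta^{*}$), whereas the discrete family indexed by $m$ already suffices for the stated existence.
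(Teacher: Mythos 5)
Your proposal only addresses part (3); parts (1) and (2) are invoked as black boxes, but they are the substantive content of the theorem and are precisely what the paper has to prove. Part (1) requires matching the Moebius--descent criterion of Theorem \ref{thm-equ-n-ori-2} against the explicit Willmore potential of Proposition \ref{cor-TEW-S3}: one must show that the reality condition $W_0(0)D(\lambda)=D(\lambda^{-1})W_0(0)$ with $W_0(0)=\mathrm{diag}(1,1,1,-1,-1)$ forces $s_2=0$, $k_1=-\bar k_1$, $\beta_1=-\bar\beta_1$, then compute the eigenvalues of $D(\lambda)|_{\lambda=1}$ and impose $\exp(\pi D(1))=W_0(0)$ and $\exp(2\pi D(1))=I$ to extract the arithmetic conditions $2\mid m$, $(m,l)=1$ in \eqref{eq-moebius}; this is the content of Lemma \ref{lemma-moebius}. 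Part (2) needs the associated--family argument (if $\beta^{*}\neq 0$ the eigenvalues of $D(\lambda)$ change type between $\lambda=1$ and $\lambda=i$, which is impossible for a surface conformally equivalent to a minimal surface in a space form, since that property is preserved along the associated family) together with the explicit computation, in the subsection on Lawson's Klein bottles, identifying the case $\beta^{*}=0$ with Lawson's potentials. None of this appears in your write--up.

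For part (3) itself, the one step you flag as ``routine but technical'' --- separating the surfaces by the closing period of the covering cylinder --- does not work. For every admissible $(m,l,\beta^{*})$ the eigenvalues of $D(\lambda)|_{\lambda=1}$ in \eqref{eq-moebius2} are $0,\pm im,\pm il$, independent of $\beta^{*}$ (the $\beta^{*}$--dependence of the characteristic polynomial cancels at $\lambda=1$), and since $(m,l)=1$ the minimal real period is $2\pi$ for \emph{every} member of the family; indeed the period is fixed by the construction, $\Gamma_0$ being generated by $z\mapsto z+2\pi$. So the period cannot distinguish different $m$, and the conformal modulus of the Moebius strip is the ratio of the width of the maximal strip of immersion to this fixed period --- a quantity governed by where the non--global Iwasawa decomposition of $e^{zD(\lambda)}$ breaks down, not by the spectrum of $D(1)$, and not accessible by a characteristic--polynomial computation. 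The paper does not attempt any such separation: it establishes (3) simply by exhibiting, via (1) and (2), potentials with $\beta^{*}\neq 0$ whose eigenvalues at $\lambda=1$ lie in $i\mathbb{Z}$, so that the surface descends to the cylinder and is not conformally minimal, the infinitude coming from the freedom in $(m,l)$ and $\beta^{*}$. If you insist on pairwise conformal inequivalence you would need a different invariant (for instance the Willmore energy along the $\beta^{*}$--family, which you yourself note), not the period.
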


The case (1) of Theorem \ref{thm-moebius-s3} can be derived from the following lemma.

\begin{lemma} \label{lemma-moebius}
We retain the assumptions of Theorem \ref{thm-moebius-s3}. Let $y_\lambda$ be the associated family of $y$ and $D(\lambda)$ its Delaunay matrix.
\begin{enumerate}
 \item                            $D(\lambda)$ and $W_0(\bar{z})=\hbox{diag}(1,1,1,-1,-1)$ solve  \eqref{eq-D-equ1} if and only if
\begin{equation}\label{eq-kb-hatD-3}
\beta_1=-\bar\beta_1, k_1=-\bar{k}_1 \ \& \ s_2=0.
\end{equation}
\item Assume that \eqref{eq-kb-hatD-3} holds. Then
  the corresponding associated family of Willmore surfaces ${y}_{\lambda}$ admits the symmetry $(\mu,\chi(\lambda))$ with $\chi(\lambda)=\exp(\pi D(\lambda))W_0(0)^{-1}$, and the eigenvalues of $D(\lambda)|_{\lambda=1}$ are \[\hbox{$0,\pm 2k_1$ and $\pm\sqrt{-4|k_1|^2-2s_1}$.}\]
In particular, $\chi(\lambda)|_{\lambda=1}=I$ if and only if
$e^{2\pi k_1}=-e^{\pi\sqrt{-4|k_1|^2-2s_1}}=-1,$
i.e.,  if and only if  \eqref{eq-moebius} holds.
\end{enumerate}
\end{lemma}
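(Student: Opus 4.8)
The plan is to handle the two parts in turn, first converting the functional equation \eqref{eq-D-equ1} into a purely algebraic intertwining relation and then into an explicit spectral computation at $\lambda=1$. For \emph{Part (1)} the decisive observation is that the proposed $W_0(\bar z)=\mathrm{diag}(1,1,1,-1,-1)$ is \emph{constant}. Hence in \eqref{eq-D-equ} the left-hand side $\dd W_0$ vanishes, and \eqref{eq-D-equ1} holds for this $W_0$ if and only if the single relation $W_0\,D(\lambda)=D(\lambda^{-1})\,W_0$ is satisfied: differentiating the formula in \eqref{eq-D-equ1} at $\bar z=0$ gives this relation, and conversely the relation exponentiates back to the constant solution, the prescribed side condition $W_0D_{-1}=D_1W_0$ being just its $\lambda^{-1}$-component. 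Writing $D(\lambda)=\lambda^{-1}D_{-1}+D_0+\lambda D_1$ with $D_1=\overline{D_{-1}}$ as in \eqref{eq-pot-equ}, I would split this relation by powers of $\lambda$ into $W_0D_{-1}=D_1W_0$ (and its conjugate) together with $W_0D_0=D_0W_0$. Using the explicit $5\times5$ block shape of $D(\lambda)$ from Proposition \ref{cor-TEW-S3} — the $B_1$-data of \eqref{B1} occupying the last row and column, the $\tilde A_1$-block of \eqref{s} sitting in the upper $4\times4$ corner — conjugation by $W_0$ simply negates the entries coupling the index set $\{1,2,3\}$ to $\{4,5\}$. Reading off the resulting entrywise identities is then routine and manifestly reversible: the $\lambda^{\pm1}$ part forces $\beta_1=-\overline{\beta_1}$ and $k_1=-\overline{k_1}$, while the $\lambda^0$ part forces the $\mathrm{Im}(s)$-entries of $\tilde A_1$ to vanish, i.e. $s_2=0$. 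This is exactly \eqref{eq-kb-hatD-3}.

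For \emph{Part (2)}, granting \eqref{eq-kb-hatD-3}, Part (1) shows that $(D(\lambda),W_0)$ with $W_0=\mathrm{diag}(1,1,1,-1,-1)\in SO^+(1,4)\cap(O^+(1,3)\times O(1))$ satisfies \eqref{eq-D-equ1}; thus Theorem \ref{thm-equ-n-ori}$(2)$ applies directly and yields the symmetry $(\mu,\chi(\lambda))$ with $\chi(\lambda)=\exp(\pi D(\lambda))W_0(0)^{-1}$. For the spectrum I would evaluate at $\lambda=1$: under \eqref{eq-kb-hatD-3} the $\beta_1$-entries enter $D(1)$ only through $\beta_1+\overline{\beta_1}=0$ and the $\mathrm{Im}(s)$-entries are already absent, so $D(1)$ is block-diagonal with a $3\times3$ block on $\{1,2,3\}$ and a $2\times2$ block on $\{4,5\}$. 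A one-line cofactor expansion gives characteristic polynomials $-t\bigl(t^2-(c_1^2-c_3^2)\bigr)$ for the first block (with $c_1,c_3$ the $\tilde A_1$-entries) and $t^2-4k_1^2$ for the second; since $c_1^2-c_3^2=-4|k_1|^2-2s_1$, the eigenvalues are $0,\ \pm2k_1,\ \pm\sqrt{-4|k_1|^2-2s_1}$, as claimed.

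It remains to establish the closing criterion. Since $W_0(0)^{-1}=W_0(0)$, the condition $\chi(1)=I$ is equivalent to $\exp(\pi D(1))=\mathrm{diag}(1,1,1,-1,-1)$. By the block-diagonal structure this splits into $\exp$ of the $3\times3$ block being $I_3$ and $\exp$ of the $2\times2$ block being $-I_2$. Away from the degenerate loci $k_1=0$ and $-4|k_1|^2-2s_1=0$, both blocks have distinct (purely imaginary) eigenvalues and are therefore semisimple, so these two matrix identities are equivalent to the scalar equations $e^{\pi\sqrt{-4|k_1|^2-2s_1}}=1$ and $e^{2\pi k_1}=-1$, that is $e^{2\pi k_1}=-e^{\pi\sqrt{-4|k_1|^2-2s_1}}=-1$. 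Finally, writing $2k_1=-il$ and $\sqrt{-4|k_1|^2-2s_1}=im$, the first equation forces $l$ odd and the second forces $m$ even; recording $(m,l)=1$ then yields the parametrization \eqref{eq-moebius}, and back-substitution reproduces \eqref{eq-moebius2}.

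I expect the genuine difficulty to lie in this last reduction. One must justify carefully that the matrix identity $\exp(\pi D(1))=W_0$ is \emph{equivalent} to — and not merely implied by — the eigenvalue conditions, which requires verifying semisimplicity of $D(1)$ and separately dismissing the degenerate cases $k_1=0$ and $-4|k_1|^2-2s_1=0$ (these are precisely the exclusions corresponding to $c=\pm1$ seen in the cylinder discussion), and then extracting the coprimality and parity constraints on $(m,l)$ cleanly from the two exponential equations. By contrast, Part (1) and the eigenvalue computation in Part (2) are direct, if slightly tedious, block-matrix manipulations.
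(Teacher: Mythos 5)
Your proposal is correct and follows essentially the same route as the paper: Part (1) is reduced to the intertwining relation $W_0D(\lambda)=D(\lambda^{-1})W_0$ and read off entrywise by powers of $\lambda$, and Part (2) invokes Theorem \ref{thm-equ-n-ori}(2) for the symmetry and then translates $\chi(1)=I$ into $\exp(\pi D(1))=W_0$ and the eigenvalue conditions. The paper's own proof simply asserts these computations as "straightforward to verify"; you supply the block decomposition, the characteristic polynomials, and the semisimplicity/degeneracy check needed to pass from the matrix identity to the scalar exponential equations, all of which check out.
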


\begin{proof}(1). By Theorem \ref{thm-equ-n-ori}, we see that the matrices $D(\lambda)$  and $W_0(\bar{z})=W_0(0)$ solve \eqref{eq-D-equ1} if and only if $W_0(0)D(\lambda)-D(\lambda^{-1})W_0(0)=0$, which is equivalent to \eqref{eq-kb-hatD-3} by substituting $D(\lambda)$ into the above equation.

(2). By Theorem \ref{thm-equ-n-ori-2}, $y$ descends to a Willmore Moebius strip, if and only if  (\ref{eq-D-equ1}) and (\ref{eq-w0}), equivalently (\ref{eq-kb-hatD-3}), are satisfied. By (\ref{5}) this is equivalent to
  \[\exp \pi D(\lambda)|_{\lambda=1}=W_0(\bar{z})=diag(1,1,1,-1,-1).\]
  But
$\chi(\lambda)|_{\lambda=1}=\exp(\pi D(\lambda))|_{\lambda=1} W_0(0)^{-1}=I$
if and only if $e^{2\pi k_1}=- e^{2\pi\sqrt{-|k_1|^2-2s_1}}=-1.$

Finally, it is straightforward to verify that, in view of (\ref{eq-kb-hatD-3}) the last equation
is equivalent to the first equation of  (\ref{eq-moebius}).
\end{proof}
Note that this lemma  provides all translationally equivariant Willmore surfaces with symmetries $(\mu,\chi(\lambda))$ and $\chi(\lambda)=\exp(\pi L(\lambda))W_0(0)^{-1}$.

\ \\
\emph{Proof of Theorem \ref{thm-moebius-s3}:}  (1). {See Lemma \ref{lemma-moebius}.}

(2) First we note that  there exists no non-orientable translationally equivariant minimal surfaces in $\R^3$, since the Catenoid is the only translationally  equivariant minimal surface in $\R^3$. Secondly,
by Lemma 1.2 of \cite{Wang-2} (see also \cite{Helein}), $y$ is conformally equivalent to a minimal surface in $S^3$ or $\mathbb{H}^3$ if and only if so is its associated family. As a consequence, the monodromy matrix $\exp( \pi D(\lambda))$ takes value in some subgroup of $SO^+(1,4)$ conjugate to $SO(4)$ for all $\lambda$ if  $y$ is conformally equivalent to a minimal surface in $S^3$ . And the monodromy matrix $\exp( \pi D(\lambda))$ takes value in some subgroup of $SO^+(1,4)$  conjugate to $SO^+(1,3)$   for all $\lambda$ if  $y$ is conformally equivalent to a minimal surface in $\mathbb H^3$. An elementary computation shows that $D(\lambda)$ has the eigenvalues
\begin{equation}
0,\pm\frac{\sqrt{2}}{2}\sqrt{-(m^2+l^2)\pm\sqrt{(m^4+l^4+|\beta_1|^2)-(\lambda^2+\lambda^{-2})(m^2l^2+ \frac{1}{2}|\beta_1|^2)}}.
\end{equation}
 If the complex purely imaginary number $\beta_1$ is not $0$,   then $D(\lambda)$ has four purely imaginary eigenvalues, when $\lambda=1$, and  has two real eigenvalues and two purely imaginary eigenvalues, when $\lambda=i$. But, by what was said above, the type of eigenvalue can not change within the associated family in such a way. This contradiction implies $\beta_1 = 0$.

 We will show in the next subsection, by a concrete computation, that the case
$\beta_1=0$ provides exactly the potentials of Lawson's examples, which were in fact  a motivation for the above results.

(3) By (2), all surfaces obtained with $\beta_1 \neq 0$ lead to non-minimal surfaces.
We thus only need to make sure that $D(\lambda)$ induces a Willmore cylinder defined
on $\C/\Gamma_0$. To achieve this it suffices to make sure that for $\lambda = 1$ the four purely imaginary eigenvalues are in $2 i \pi \mathbb{Z}$. \hfill$\Box$

%\begin{figure}[ht]
%\centering
%$
%\begin{array}{ccc}
%\includegraphics[height=38mm]{b=1-%1.jpg} \,\, & \,\,
%\includegraphics[height=38mm]{b=1-%2.jpg} \,\, & \,\,
%\includegraphics[height=38mm]{b=1-%3.jpg}
%\end{array}
%$
%\caption{Willmore Moebius strip (with %$m=2,l=1$) computed with a numerical %implementation of
%DPW \cite{Brander,B-W} (with %parameter X = bjorlingc(1,  0,  0,  %-1/2,   -2-1/2) in \cite{Brander}).
%Left: The case $(u,v)\in[-%\pi,\pi]\times[-\pi,\pi]$.
%Middle: The case $(u,v)\in[-%\pi,\pi]\times[-\pi/10,\pi/10]$. %Right: The case $(u,v)\in[-%\pi/20,\pi/20]\times[-\pi,\pi]$.
%}
%\label{figureexamples}
%\end{figure}

\begin{figure}[ht]
\centering
$
\begin{array}{ccc}
\includegraphics[height=39mm]{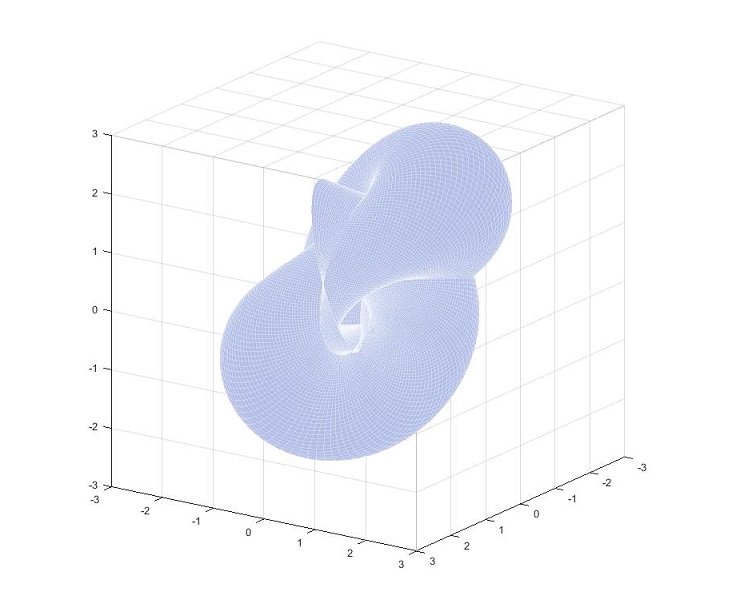} \, & \,
\includegraphics[height=39mm]{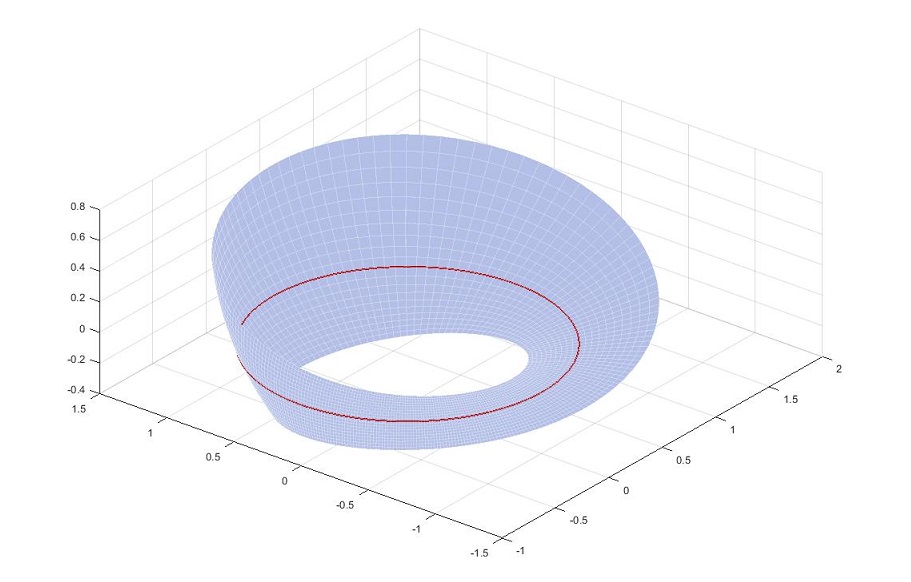} \, & \,
\includegraphics[height=39mm]{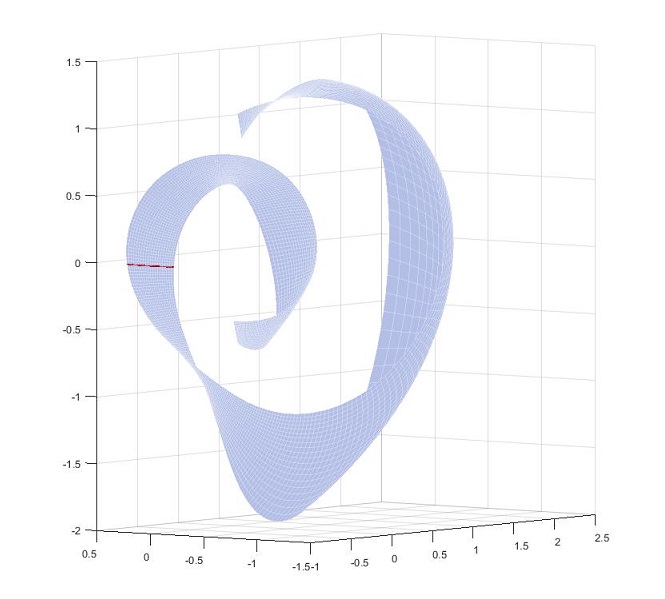}
\end{array}
$
\caption{Willmore Moebius strip (with $m=2,l=1$) computed with a numerical implementation of
DPW \cite{Brander,B-W} (with parameter X = bjorlingc(1,  0,  0,  -1/2,   -2-1/2) in \cite{Brander}).
Left: The case $(u,v)\in[-\pi,\pi]\times[-\pi,\pi]$.
Middle: The case $(u,v)\in[-\pi,\pi]\times[-\pi/10,\pi/10]$. Right: The case $(u,v)\in[-\pi/20,\pi/20]\times[-\pi,\pi]$.
}
\label{figureexamples}
\end{figure}

  We end this subsection with another kind of examples.
\begin{proposition}
Consider the following Delaunay matrices
\begin{equation}\label{eq-kb-hatD-5}D(\lambda)=\left(
                                  \begin{array}{cccccc}
                                    0 & 0 & \frac{\sqrt{2}}{4} &0& \beta_{\lambda}&0 \\
                                    0 & 0 &  \frac{5\sqrt{2}}{4} &0&- \beta_{\lambda}& 0 \\
                                \frac{\sqrt{2}}{4}  &-\frac{5\sqrt{2}}{4}& 0 & 0& \frac{ i(\lambda^{-1}-\lambda )}{2} & -\frac{ \lambda^{-1}+\lambda }{2} \\
                                   0& 0& 0 & 0& - \frac{\lambda^{-1}+ \lambda }{2}&  -\frac{i(\lambda^{-1}- \lambda) }{2}\\
                                     \beta_{\lambda} &  \beta_{\lambda}& -\frac{ i(\lambda^{-1}-\lambda )}{2} &  \frac{\lambda^{-1} + \lambda }{2}& 0&0 \\
                                    0 & 0 &  \frac{\lambda^{-1} + \lambda }{2} & \frac{ i(\lambda^{-1}-\lambda )}{2}& 0&0 \\
                                  \end{array}
                                \right).\end{equation}
         with $\beta_{\lambda}=\sqrt{2}(\lambda^{-1}\beta_1+ \lambda\bar\beta_1)$. Set $\beta_1=i\beta_{10}$ with $\beta_{10}\in\R^+$.
         Then in the associated family $y_\lambda$ of translationally equivariant Willmore surfaces the surface $y = y_{\lambda = 1}$ descends to a Willmore surface from the Moebius strip to $S^4$. This Willmore surface can not be realized as a minimal surface in any space form.
\end{proposition}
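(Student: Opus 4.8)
The plan is to prove the two claims of the Proposition separately. That $y=y_{\lambda=1}$ descends to a Willmore Moebius strip in $S^{4}$ (so $n=2$ and $G/K=SO^{+}(1,5)/(SO^{+}(1,3)\times SO(2))$) will follow from the converse direction, Theorem~\ref{thm-equ-n-ori-2}(2), applied to the candidate matrix
\[
W_{0}(0)=\mathrm{diag}(1,1,1,-1,-1,1)\in SO^{+}(1,5),
\]
which is of the form \eqref{eq-w0}. The non-minimality will follow from the associated-family eigenvalue argument already used in the proof of Theorem~\ref{thm-moebius-s3}(2), once we locate a value of $\lambda$ at which the spectrum of $D(\lambda)$ changes type.

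For the Moebius descent I would carry out three steps. \textbf{(i)} The matrix \eqref{eq-kb-hatD-5} is a Delaunay potential of the Willmore form described in Proposition~\ref{cor-TEW-S3} for $n=2$, so the whole formalism of Section~7 applies. \textbf{(ii)} Verify the intertwining relation \eqref{eq-D-equ1} for the constant $W_{0}(\bar z)\equiv W_{0}(0)$ above, i.e. $W_{0}(0)D(\lambda)=D(\lambda^{-1})W_{0}(0)$. Since $\beta_{1}=i\beta_{10}$ is purely imaginary, passing from $\lambda$ to $\lambda^{-1}$ fixes $\tfrac{\lambda^{-1}+\lambda}{2}$, negates $\tfrac{i(\lambda^{-1}-\lambda)}{2}$ and negates $\beta_{\lambda}$; as conjugation by $W_{0}(0)$ flips precisely the signs of the entries coupling the index block $\{4,5\}$ to its complement, an entrywise comparison gives the identity (equivalently $W_{0}(0)D_{-1}=D_{1}W_{0}(0)$ and $[W_{0}(0),D_{0}]=0$). \textbf{(iii)} Compute the monodromy at $\lambda=1$. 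There $\beta_{\lambda}=0$ and $\tfrac{i(\lambda^{-1}-\lambda)}{2}=0$, so $D(1)$ splits along the blocks $\{4,5\}$ and $\{1,2,3,6\}$: the first is $\left(\begin{smallmatrix}0&-1\\1&0\end{smallmatrix}\right)$ with eigenvalues $\pm i$, while the second has characteristic polynomial $t^{2}(t^{2}+4)$; hence $D(1)$ is diagonalizable with spectrum $\{0,0,\pm i,\pm 2i\}$. Exponentiating block by block gives $\exp(2\pi D(1))=I$ and $\exp(\pi D(1))=\mathrm{diag}(1,1,1,-1,-1,1)=W_{0}(0)$. Thus $y$ closes into the cylinder $\Gamma_{0}/\St$ with period $2\pi$ and \eqref{eq-w0} holds, so Theorem~\ref{thm-equ-n-ori-2}(2) shows that $\mu$ leaves $y$ invariant and $y$ descends to a Willmore Moebius strip in $S^{4}$.

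For the non-minimality I would follow Theorem~\ref{thm-moebius-s3}(2). By Lemma~1.2 of \cite{Wang-2}, $y$ is conformally equivalent to a minimal surface in a space form if and only if \emph{every} member $y_{\lambda}$ of its associated family is; this constrains the monodromy $\exp(\pi D(\lambda))$ to stay, for all $\lambda\in S^{1}$, inside one fixed conjugacy class of subgroups of $SO^{+}(1,5)$ (a conjugate of a compact $SO$-factor in the spherical case, of an $SO^{+}(1,k)$-factor in the hyperbolic case, or of a Euclidean motion group in the flat case), so that the real/imaginary type of the spectrum of $D(\lambda)$ must be independent of $\lambda$. I would then exhibit a type change. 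At $\lambda=i$ one has $\tfrac{\lambda^{-1}+\lambda}{2}=0$, $\tfrac{i(\lambda^{-1}-\lambda)}{2}=1$ and $\beta_{\lambda}=2\sqrt{2}\,\beta_{10}$; now $D(i)$ splits along $\{4,6\}$ and $\{1,2,3,5\}$, the first block contributing $\pm i$ and the second having characteristic polynomial $t^{4}+4t^{2}-36\beta_{10}^{2}$, whose roots obey $t^{2}=-2\pm\sqrt{4+36\beta_{10}^{2}}$. For $\beta_{10}>0$ the root $t^{2}=-2+\sqrt{4+36\beta_{10}^{2}}>0$ produces a pair of nonzero real eigenvalues, whereas at $\lambda=1$ all nonzero eigenvalues are purely imaginary. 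This contradicts the required $\lambda$-independence, so $y$ is not conformally minimal in any space form.

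The main obstacle is this last step: making the dichotomy uniform across all three families of space forms at once. One must argue that conformal minimality of the full associated family pins the monodromy to a single real form whose Lie algebra forces a fixed spectral type for $D(\lambda)$, and that the single appearance of a real eigenvalue at $\lambda=i$ already excludes every such possibility. The purely algebraic verifications — that \eqref{eq-kb-hatD-5} has Willmore form, the entrywise intertwining check, and the two characteristic-polynomial computations — are routine once organized by the $\{4,5\}$ (respectively $\{4,6\}$) block splitting; the spectral-type obstruction carries the actual content, exactly as in Theorem~\ref{thm-moebius-s3}(2).
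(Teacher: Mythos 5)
Your treatment of the Moebius descent is essentially the paper's own argument: verify the intertwining relation $W_0(0)D(\lambda)=D(\lambda^{-1})W_0(0)$ for $W_0(0)=\mathrm{diag}(1,1,1,-1,-1,1)$, compute the spectrum $\{0,0,\pm i,\pm 2i\}$ of $D(1)$ via the block splitting along $\{4,5\}$ and $\{1,2,3,6\}$, and conclude $\exp(\pi D(1))=W_0(0)$ and $\exp(2\pi D(1))=I$ so that Theorem \ref{thm-equ-n-ori-2}(2) applies; your characteristic polynomial $t^2(t^2+4)$ and the constancy of $W_0(\bar z)$ both check out. For the non-minimality, however, you take a genuinely different route from the paper. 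The paper disposes of it in one line: the potential has $(k_1,k_2)$ and $(\beta_1,\beta_2)$ linearly independent, so $B_1$ has rank $2$, hence the surface is not S-Willmore and therefore cannot be conformally equivalent to a minimal surface in any space form (minimal surfaces in space forms are always S-Willmore). You instead transplant the associated-family spectral-type argument from the proof of Theorem \ref{thm-moebius-s3}(2): your computation that $D(i)$ splits along $\{4,6\}$ and $\{1,2,3,5\}$ with the latter block having characteristic polynomial $t^4+4t^2-36\beta_{10}^2$, hence a pair of nonzero real eigenvalues when $\beta_{10}>0$, is correct, and this does contradict the spectrum of $D(1)$ being purely imaginary. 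The trade-off is exactly the obstacle you flag yourself: the monodromy argument cleanly excludes the spherical and hyperbolic cases (where the monodromy must stay in a conjugate of $SO(5)$ resp.\ $SO^+(1,4)$, forcing a fixed spectral type), but the flat case needs a separate argument — note that the paper's own proof of Theorem \ref{thm-moebius-s3}(2) handles $\R^3$ by an ad hoc remark about the catenoid rather than by the eigenvalue dichotomy, and you would need an analogue for $\R^4$ (e.g.\ that the stabilizer of a null vector in $SO^+(1,5)$ still only admits eigenvalues of modulus one, which a nonzero real eigenvalue of $D(i)$ violates). The S-Willmore criterion buys a uniform, computation-free exclusion of all three space-form families at once; your approach is self-contained within the loop-group formalism but leaves that one case to be closed.
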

\begin{proof} It is not difficult to verify that
        the eigenvalues of $D(\lambda = 1)$ are $\{0,0,\pm 2i, \pm i\}$.
        Therefore the surface descends to the cylinder $\C/ \Gamma_0$.
        Moreover, we have
        \begin{equation} \label{equ}
        \exp(\pi D(\lambda = 1))=W_0(0)=\hbox{diag}(1,1,1,-1,-1,1),
        \end{equation}
         and $\exp(-\bar{z} D(\lambda^{-1}))W_0(0)\exp(\bar{z} D(\lambda))=W_0(0)$.
 By Theorem \ref{thm-equ-n-ori}, the corresponding Willmore surface $y$
 admits the symmetry $(\mu, \chi(\lambda))$. Now (\ref{equ}) implies that $y = y_1$
 is an equivariant Willmore Moebius strip in $S^4$. Moreover, since the rank of $B_1$ is $2$, it is not S-Willmore (see for example \cite{DoWa1}) and hence can not be conformally equivalent to any minimal surface in any space form.
\end{proof}

%%%%%%%%%%%%%%%%%%%%%%%%%%%%%

\subsection{Willmore Moebius strips in $S^3$ associated with Lawson's Klein bottles}

In formula (7.1) of \cite{Lawson}  families of minimal tori and minimal Klein bottles in $S^3$ were given by concrete expressions (also see Theorem 3 of \cite{Lawson} for more information).
Here we use one of the simplest examples of \cite{Lawson} to construct a Willmore Moebius strip in $S^3$, {and to illustrate the theory presented in the last subsection. This also finishes the proof of part $(2)$ of} Theorem \ref{thm-moebius-s3}.
%%%%%%%%%%%%%%%%%%%
\subsubsection{Definition and basic properties of Lawson's minimal Klein bottles into $S^3$}

We consider the minimal immersion $y: \R\times\R \rightarrow S^3 $  given by \cite{Lawson}:
\begin{equation} \label{Klein}
y(u,v) = (\cos(mu)\cos(\hat{v}(v)),\sin(mu) \cos(\hat{v}(v)), \cos(lu) \sin(\hat{v}(v)), \sin(lu) \sin(\hat{v}(v))).
\end{equation}
Here  $m,l\in\mathbb{Z}^+$, $2|ml$ and $(m,l)=1$. If $l$ is an even number, we can change the coordinates such that  the new surface is congruent to the above one after applying an isometry of $S^3$. So w.l.g. we assume that $m>0$ is an even integer.

Note that  $u+i\hat{v}$ is not a complex coordinate of $y$, but if we set
\begin{equation*}
e^{\omega}(\hat{v})=\sqrt{m^2\cos^2\hat{v} +l^2\sin^2\hat{v} }, \hspace{2mm} \ \
v=\int_0^{v} e^{-\omega}(\check{v})\dd \check{v}, \hspace{3mm}   \nu =
2 \int_0^{\pi}e^{-\omega}(\check{v})\dd \check{v},
\end{equation*}
 and let $\hat{v}(v)$ be the inverse function of $v(\hat v)$, then
$z=u+iv$ is a complex coordinate of $y$ with
 $\hat{v}(-v)=-\hat{v}(v),\ \hat{v}(0)=0,\  \hat{v}(v+ {\nu})=\hat{v}(v)+2\pi,$
  and $\frac{\dd\hat{v}}{\dd v}=e^{\omega}.$

Moreover, $y$ has several symmetries:
\begin{enumerate}
\item The immersion $y$ is equivariant relative to the variable $u$ (\cite{Bu-Ki}):
\begin{equation} \label{symmetries}
y(u+t,v) = R(t) y(u,v)~~\hbox{ for all }\ t \in \R.
\end{equation}
Here $R(t)$ rotates continuously the first two coordinates by an angle of $2t$ and the last two coordinates by the angle $t$. Moreover, $R(0)=R(2\pi)=I$  and $R(t)$ is a one-parameter group.

By abuse of notation we denote by $T_1$ and $T_2$ the translations defined below.

\item The immersion $y$ is invariant under the transformations $T_1$ and $T_2$:
\begin{equation}
T_1: (u, v) \mapsto (u +2 \pi, v) \hspace{2mm} \mbox{and} \hspace{2mm}
T_2: (u,v) \mapsto (u, v + \nu).
\end{equation}
\item The immersion $y$ is invariant under the transformation $\mu$:
\begin{equation}
\mu(u,v)=(u + \pi, -v).
\end{equation}

\end{enumerate}

Using these transformations one can define several natural quotients of $\R\times \R$:
\begin{enumerate}
\item
The cylinders
$M_1 = \C/  \mathbb{Z} T_1 ,$ and $M_2 = \C/ \mathbb{Z} T_2;$
\item The torus
$\mathbb{T} = \C/ \tilde{\Gamma}, \hbox{ where }\tilde\Gamma =\{mT_1+nT_2|m,n\in\mathbb Z\};$
\item The  Moebius strip
$\mathbb{M} =  \C / \Gamma,\ \hbox{ where }\Gamma = \Gamma_0 \cup \mu \Gamma_0 \hbox { and }\Gamma_0 = \mathbb{Z} T_1;$

\item  The Klein bottle
$ \mathbb{K} = \C / \Xi,\ \hbox{ where }
\Xi = \tilde{\Gamma} \cup \mu \tilde{\Gamma}.$
\end{enumerate}

To illustrate what was said in Section 6.1, we consider the case of the Moebius strip $\mathbb M$.
%%%%%%%%%%%%%%%%%
\subsubsection{Determining the Delaunay type matrix for $y$ considered as an equivariant surface}
Since $\mathbb{M}$ is derived as quotient of the  equivariant cylinder
$M_1$, we consider this equivariant surface first. Hence we {choose}  a potential of type \eqref{eq-pot-equ}.

Let $F$ denote an extended frame of type (\ref{eq-F}) with Maurer-Cartan form $\alpha$ for $y$. Then we put
$D(\lambda) = \alpha' (0) + \alpha'' (0).$ By the results of \cite{Bu-Ki} this is the Delaunay matrix associated with the surface $F(0,0,\lambda = 1)^{-1}y$ with frame  $F(0,0,\lambda = 1)^{-1}y$.  To determine the matrix coefficients of $D(\lambda)$, we compute  $|y_u|^2=|y_v|^2= m^2\cos^2\hat{v}+l^2 \sin^2\hat{v}=e^{2\omega}.$
Setting
\[n=e^{-\omega}( l\sin \hat{v}\sin  mu, -l\sin\hat{ v}\cos mu, -m\cos \hat{v}\sin  lu, m\cos \hat{v}\cos l u),\]
we derive that  $\langle n, n\rangle=1,\ \langle n, y\rangle=\langle n, y_u\rangle=\langle n, y_v\rangle=0,\ \det(y,y_u,y_v,n)>0.$
Next we compute
$\langle y_{uu}, n\rangle=\langle y_{vv}, n\rangle=0,\ \langle y_{uv}, n\rangle= ml.$
Altogether we obtain
\begin{equation*}\label{eq-moving0}
\left\{\begin {array}{lllll}
y_{zz}&=\ 2\omega_z y_{z}+\Omega n,\\
y_{z\bar{z}}&=\ -\frac{1}{2}e^{2\omega}y,\\
n_z&=\ -2e^{-2\omega}\Omega y_{\bar{z}},\\
\end {array}\right.\ \ \
\end{equation*}
$\hbox{with } \Omega=-\frac{i}{2}\langle y_{uv}, n\rangle= -iml.$ Setting $Y=e^{-\omega}(1,y),$ we have $|Y_z|^2=\frac{1}{2}$ and (see \cite{BPP,DoWa10,DoWa12} for more details)
\begin{equation*}\label{eq-moving0}
\left\{\begin {array}{lllll}
Y_{zz}&=\ -\frac{s}{2}Y+k \psi,\\
Y_{z\bar{z}}&=\ -|k|^2Y+\frac{1}{2}N,\\
N_z&=\ -2|k|^2Y_z-s y_{\bar{z}}+2k_{\bar{z}}\psi,\\
\psi_z &=\  2k_{\bar{z}}Y- 2kY_{\bar{z}},\\
\end {array}\right.
\end{equation*}
with $s=2(\omega_{zz}-\omega_z^2),\  k=e^{-\omega}\Omega=-ie^{-\omega}$ and  $k_{\bar{z}}=ie^{-\omega}\omega_{\bar{z}}.$
Since
$2e^{2\omega}\omega_{ z}=i(m^2-l^2)\cos\hat v\sin\hat v e^{\omega},$
we obtain
\[\omega|_{z=0}=\ln m, \ \omega_z|_{z=0}=0, \ \omega_{zz}|_{z=0}=\frac{m^2-l^2}{4},\ s(0)=\frac{m^2-l^2}{2},\ k(0)=- \frac{il}{2} \hbox{ and }\ k_{\bar{z}}(0)=0.\]
 As a consequence, we obtain for
the frame $F(z,\lambda)$, formed like the frame in  \eqref{eq-F} (see also \cite[Proposition 2.2]{DoWa12}), that
$\alpha(0,\lambda)=\alpha(0,\lambda)'\dd z+\alpha(0,\lambda)''\dd\bar{z}$,
 with $\alpha(0,\lambda)'=\left.F(z,\lambda)^{-1}\partial_z F(z,\lambda)\right|_{z=0}$ being equal to
\[\left(
                                  \begin{array}{ccccc}
                                    0 & 0 & - \frac{m^2-2}{4\sqrt{2}} & \frac{-i(2+m^2-2l^2)}{4\sqrt{2}}& 0 \\
                                    0 & 0 & \frac{m^2+2}{4\sqrt{2}} & \frac{-i(2-m^2+2l^2)}{4\sqrt{2}}& 0 \\
                                  -\frac{m^2-2}{4\sqrt{2}}  & -\frac{m^2+2}{4\sqrt{2}} & 0 & 0& \frac{i\lambda^{-1}l}{2} \\
                                   \frac{-i(2+m^2-2l^2)}{4\sqrt{2}}& \frac{-i(2-m^2+2l^2)}{4\sqrt{2}} & 0 & 0& - \frac{\lambda^{-1}l}{2} \\
                                    0 & 0 & -  \frac{i\lambda^{-1}l}{2} &  \frac{\lambda^{-1}l}{2}& 0 \\
                                  \end{array}
                                \right).\]
Hence
\begin{equation}\label{eq-kb-hatD-0}D(\lambda)=\alpha(0,\lambda)'+\alpha(0,\lambda)''=\left(
                                  \begin{array}{ccccc}
                                    0 & 0 & - \frac{m^2-2}{2\sqrt{2}} &0& 0 \\
                                    0 & 0 & \frac{m^2+2}{2\sqrt{2}}& 0& 0 \\
                                 -  \frac{m^2-2}{2\sqrt{2}} &-\frac{m^2+2}{2\sqrt{2}} & 0 & 0& \frac{ i(\lambda^{-1}l-\lambda l)}{2} \\
                                   0& 0& 0 & 0& - \frac{\lambda^{-1}l+ \lambda l}{2}\\
                                    0 & 0 & -\frac{ i(\lambda^{-1}l-\lambda l)}{2} &  \frac{\lambda^{-1} l+ \lambda l}{2}& 0 \\
                                  \end{array}
                                \right).\end{equation}
So
$\hat\xi=D(\lambda)\dd z$ is the holomorphic potential generating the Willmore surface
$F(0, \lambda = 1)^{-1}y$  conformally equivalent to  $y$, as explained above.

 Since the potential in \eqref{eq-kb-hatD-0} is exactly the case  $\beta_1=0$ in Theorem \ref{thm-moebius-s3}, it  finishes the proof of  Theorem \ref{thm-moebius-s3}.

%%%%%%%%%%%%%%%%%

\begin{remark}\
%\begin{itemize}
%\item
Lawson's examples actually descend to  equivariant Willmore immersions from Klein bottles
to $S^{n+2}.$ Discussing this goes beyond the topic of this paper. We plan to consider this problem in a future publication.
%\end{itemize}
\end{remark}

%%%%%%%%%%%%%%%%%%%%%

{\bf Acknowledgements}\ \
The second named author was partly supported by the Project 12371052 of NSFC. Part of this work was carried out during visits of the first named author to  Fujian Normal University and visits of the second named author to TU Munich. Both authors would like to thank their host Universities for their hospitality. We would like to thank Shimpei Kobayashi for helpful information about helicoidal surfaces.

\

{\bf Data Availability Statement}
The data used to support the findings of this study are included within the article.
\def\refname{References}

\

  Josef F. Dorfmeister

Fakult\" at f\" ur Mathematik, TU-M\" unchen, Boltzmannstr.3, D-85747, Garching, Germany

{\em E-mail address}: 	josef.dorfmeister@tum.de\\

Peng Wang

School of Mathematics and Statistics, FJKLAMA, Key Laboratory of Analytical Mathematics and Applications (Ministry of Education), Fujian Normal University, Fuzhou 350117, P. R. China.

{\em E-mail address}: pengwang@fjnu.edu.cn

\end{document}